\newtheorem{theorem}{Theorem}
\newtheorem{corollary}[theorem]{Corollary}
\newtheorem{proposition}[theorem]{Proposition}
\newtheorem{lemma}[theorem]{Lemma}
\newtheorem*{theoremA}  {Theorem A}
\theoremstyle{definition}
\newtheorem{definition}[theorem]{Definition}
\newtheorem{example}[theorem]{Example}
\newtheorem{remark}[theorem]{Remark}
\newcommand{\ab}[1]{ \left\langle {#1} \right\rangle }
\newcommand{\set}[1]{ \{ {#1} \} }
\newcommand{\imult}{\lrcorner \,}
\newcommand{\ot}   {\otimes}
\newcommand{\bbA}{\mathbb{A}}
\newcommand{\bbC}{\mathbb{C}}
\newcommand{\bbF}{\mathbb{F}}
\newcommand{\bbH}{\mathbb{H}}
\newcommand{\bbI}{\mathbb{I}}
\newcommand{\bbJ}{\mathbb{J}}
\newcommand{\bbO}{\mathbb{O}}
\newcommand{\bbP}{\mathbb{P}}
\newcommand{\bbR}{\mathbb{R}}
\newcommand{\bbS}{\mathbb{S}}
\newcommand{\bbT}{\mathbb{T}}
\newcommand{\bbV}{\mathbb{V}}
\newcommand{\bbZ}{\mathbb{Z}}
\newcommand{\mba}{\mathbf{a}}
\newcommand{\mbg}{\mathbf{g}}
\newcommand{\mbaEs}{\mathbf{aEs}}
\newcommand{\mcD}{\mathcal{D}}
\newcommand{\mcG}{\mathcal{G}}
\newcommand{\mcL}{\mathcal{L}}
\newcommand{\mcN}{\mathcal{N}}
\newcommand{\mcQ}{\mathcal{Q}}
\newcommand{\mcS}{\mathcal{S}}
\newcommand{\mcT}{\mathcal{T}}
\newcommand{\mfaut} {\mathfrak{aut}}
\newcommand{\mfg}   {\mathfrak{g}}
\newcommand{\mfh}   {\mathfrak{h}}
\newcommand{\mfk}   {\mathfrak{k}}
\newcommand{\mfo}   {\mathfrak{o}}
\newcommand{\mfgl}  {\mathfrak{gl}}
\newcommand{\mfsl}  {\mathfrak{sl}}
\newcommand{\mfhol} {\mathfrak{hol}}
\newcommand{\mfstab}{\mathfrak{stab}}
\newcommand{\olxi}{\overline{\xi}}
\newcommand{\whD}{\smash{\widehat{D}}}
\newcommand{\whg}{\smash{\widehat{g}}}
\newcommand{\whM}{\smash{\widehat{M}}}
\newcommand{\whV}{\smash{\widehat{V}}}
\newcommand{\whx}{\smash{\widehat{x}}}
\newcommand{\wtg}    {\smash{\widetilde{g}}}
\newcommand{\wtM}    {\smash{\widetilde{M}}}
\newcommand{\wtR}    {\smash{\widetilde{R}}}
\newcommand{\wtPhi}  {\smash{\widetilde{\Phi}}}
\newcommand{\wtnabla}{\smash{\widetilde{\nabla}}}
\newcommand{\whmfk}{\widehat{\mathfrak{k}}}
\newcommand{\wtbbO}{\smash{\widetilde{\mathbb{O}}}}
\DeclareMathOperator{\Ann} {Ann}
\DeclareMathOperator{\Aut} {Aut}
\DeclareMathOperator{\CO}  {CO}
\DeclareMathOperator{\dom} {dom}
\DeclareMathOperator{\End} {End}
\DeclareMathOperator{\G}   {G}
\DeclareMathOperator{\GL}  {GL}
\DeclareMathOperator{\Hoperator}{H}
\DeclareMathOperator{\Hol} {Hol}
\DeclareMathOperator{\id}  {id}
\DeclareMathOperator{\Imoperator}{Im}
\DeclareMathOperator{\Ooperator}{O}
\DeclareMathOperator{\rank}{rank}
\DeclareMathOperator{\Ric} {Ric}
\DeclareMathOperator{\SL}  {SL}
\DeclareMathOperator{\SO}  {SO}
\DeclareMathOperator{\Sp}  {Sp}
\DeclareMathOperator{\Spin}{Spin}
\DeclareMathOperator{\Stab}{Stab}
\DeclareMathOperator{\SU}  {SU}
\DeclareMathOperator{\tf}  {tf}
\DeclareMathOperator{\tr}  {tr}
\DeclareMathOperator{\U}   {U}
\DeclareMathOperator{\vol} {vol}
\newcommand{\ImbbA}  {\Imoperator                   \bbA  }
\newcommand{\ImwtbbO}{\Imoperator \smash{\widetilde{\bbO}}}
\title[Highly symmetric $2$-plane fields on $5$-manifolds]{Highly symmetric $2$-plane fields on $5$-manifolds \\ and $5$-dimensional Heisenberg group holonomy}
\author{Travis~Willse}
\begin{document}

\address{
  Mathematical Sciences Institute\\
  Building 27\\
  Australian National University\\
  ACT 2601\\
  Australia
}

\email{travis.willse@anu.edu.au}

\subjclass[2010]{53A30, 53B15, 53C15, 53C26, 53C29}
\keywords{conformal geometry, Fefferman-Graham ambient metric, generic distributions, special holonomy}

\begin{abstract}
Nurowski showed that any generic $2$-plane field $D$ on a $5$-manifold $M$ determines a natural conformal structure $c_D$ on $M$; these conformal structures are exactly those (on oriented $M$) whose normal conformal holonomy is contained in the (split, real) simple Lie group $\G_2$. Graham and Willse showed that for real-analytic $D$ the same holds for the holonomy of the real-analytic Fefferman-Graham ambient metric of $c_D$, and that both holonomy groups are equal to $\G_2$ for almost all $D$. We investigate here independently interesting $2$-plane fields for which the associated holonomy groups are a proper subgroup of $\G_2$.

Cartan solved the local equivalence problem for $2$-plane fields $D$ and constructed the fundamental curvature tensor $A$ for these objects. He furthermore claimed to describe locally all $D$ whose infinitesimal symmetry algebra has rank at least $6$ and gave a local quasi-normal form, depending on a single function of one variable, for those that furthermore satisfy a natural degeneracy condition on $A$, but Doubrov and Govorov recently rediscovered a counterexample to Cartan's claim. We show that for all $D$ given by Cartan's alleged quasi-normal form, the conformal structures $c_D$ induced via Nurowski's construction are almost Einstein, that we can write their ambient metrics explicitly, and that the holonomy groups associated to $c_D$ are always the $5$-dimensional Heisenberg group, which here acts indecomposably but not irreducibly. (Not all of these properties hold, however, for Doubrov and Govorov's counterexample.) We also show that the similar results hold for the related class of $2$-plane fields defined on suitable jet spaces by ordinary differential equations $z'(x) = F(y''(x))$ satisfying a simple genericity condition.
\end{abstract}

\dedicatory{This article is dedicated to Mike Eastwood on the occasion of his 60th birthday.}

\maketitle
\pagestyle{myheadings}

\tableofcontents

\section{Introduction}

In a well-known but technically demanding 1910 paper, \cite{Cartan1910}, Cartan solved the local equivalence problem for what in modern geometric language are called $2$-plane fields on $5$-manifolds, and the most interesting such fields are those that satisfy a simple genericity condition. This class is the lowest-dimensional example of $k$-plane fields on $n$-manifolds that admit nontrivial local invariants, but already the geometry of these fields is surprisingly rich and furthermore enjoys close connections with some exceptional geometric objects, including the algebra of the split octonions and the exceptional Lie group $\G_2$.

One of the most striking realizations of these connections was described by Nurowski \cite{Nurowski2005, Nurowski2008} and Leistner and Nurowski \cite{LeistnerNurowski2010}, whose work exploits the geometry of generic $2$-plane fields $D$ on $5$-manifolds $M$ to produce metrics of holonomy equal to $\G_2$ (here and henceforth, $\G_2$ denotes the split real form of the exceptional Lie group). They produce candidate metrics of this kind by concatenating two constructions: First, Nurowski exploited Cartan's solution of the local equivalence problem for these $2$-plane fields to show that any such field $D$ induces a canonical conformal structure $c_D$ of signature $(2, 3)$ on the underlying manifold \cite{Nurowski2005}. Second, the Fefferman-Graham ambient construction associates to any conformal structure $(M, c)$ of signature $(p, q)$ an essentially unique metric $\wtg$ of signature $(p + 1, q + 1)$ on a suitable open subset $\wtM \subseteq \bbR_+ \times M \times \bbR$, though for most $c$ the metric $\wtg$ cannot be identified explicitly \cite{FeffermanGraham2011}. Applying this latter construction to a conformal structure $c_D$ produces a pseudo-Riemannian metric of signature $(3, 4)$, and Leistner and Nurowski produced an explicit family of $2$-plane fields $D$ parametrized by $\bbR^8$ and found corresponding (polynomial) ambient metrics $\wtg_D$ of $c_D$. By giving explicitly a certain object parallel with respect to $\wtg_D$---namely, a $3$-form of a certain algebraic type---they showed that the holonomy groups $\Hol(\wtg_D)$ of the metrics in this family all are contained in the stabilizer in $\SO(3, 4)$ of the $3$-form, which turns out to be $\G_2$, and moreover that for an explicit, dense open subset of parameter values, $\Hol(\wtg_D) = \G_2$ \cite{LeistnerNurowski2010}. This is interesting in part because there are relatively few examples of metrics with this holonomy group.  Later, Graham and Willse showed that for all real-analytic $D$ on oriented $5$-manifolds, there is an ambient metric $\wtg_D$ such that $\Hol(\wtg_D) \leq \G_2$ and that, in a suitable sense, equality holds generically \cite{GrahamWillse2012G2}.

In this article we give an explicit infinite-dimensional family of $2$-plane fields $D$ and corresponding explicit ambient metrics $\wtg_D$ for which the containment of holonomy in $\G_2$ is proper. The $2$-plane fields in this family satisfy two strong invariant criteria (but, pace Cartan's claims, the family is not characterized by these conditions). First, Cartan described the fundamental curvature quantity of $2$-plane fields $D$, which we may interpret as a tensor field $A \in \Gamma(\odot^4 D^*)$ \cite{Cartan1910}. If we complexify $A$, then for each $u \in M$, we may regard the roots of $A_u \ot \bbC \in \odot^4 D^*_u \ot \bbC$ as elements of the complex projective line $\bbP(D_u \ot \bbC)$, and if $A_u \neq 0$, we call the partition of $4$ given by the root multiplicities the root type of $D$ at $u$; for example, we say that $D$ has root type $[4]$ at $u$ if $A_u \ot \bbC$ is nonzero and has a quadruple root, or equivalently, if the line it spans is contained in the rational normal curve in $\odot^4 D^*_u \ot \bbC$. Second, the (infinitesimal) symmetry algebra of $D$ is the Lie algebra $\mfaut(D)$ of vector fields that preserve $D$. Cartan claimed that one can locally encode any $2$-plane field $D$ such that (A) $D$ has constant root type $[4]$ (that is, root type $[4]$ at every $u \in M$), and (B) $\dim \mfaut(D) \geq 6$, in a principal bundle $E \to M$ with $2$-dimensional structure group and a coframe \eqref{equation:structure-equations} on $E$ that depends only a single function $I$, but Doubrov and Govorov have recently produced an interesting counterexample to this claim. For any smooth function $I$ the structure equations of this coframe determines a $2$-plane field $D_I$, and we produce explicit ambient metrics $\wtg_I$ \eqref{equation:I(x)-ambient-metric} on spaces $\wtM_I$ for the conformal structures $c_I := c_{D_I}$ they induce. These conformal structures all enjoy additional special structures, including (exactly) a $2$-dimensional vector space of almost Einstein scales (in fact, almost Ricci-flat scales). Each of these scales in turn corresponds to a parallel null vector field on $\wtM_I$, and so $\Hol(\wtg_I)$ must be contained inside the common stabilizer of those vector fields; we show that $\Hol(\wtg_I)$ is actually this full group, which roughly indicates that the only objects parallel with respect to $\wtg_I$ are those arising from the parallel $\G_2$-structure and the described null vector fields.

We also compute for the conformal structures $c_I$ a closely related notion of holonomy that has recently enjoyed heightened attention \cite{Leistner2006, HammerlSagerschnig2009, CapGoverGrahamHammerl2011}. One can encode any $n$-dimensional conformal manifold $(M, c)$ in a rank-$(n + 2)$ bundle $\mcT \to M$ called the tractor bundle, together with some auxiliary data including a canonical (normal) conformal connection $\nabla^{\mcT}$ on $\mcT$; we call the holonomy $\Hol(\nabla^{\mcT})$ of this connection the (normal) conformal holonomy of $c$.

Partly via explicit computation we prove the following:

\begin{theoremA}
Let $I: U \to \bbR$ be a smooth function whose domain $U \subset \bbR$ is open and connected, and let $\wtg_I$ be the Ricci-flat ambient metric \eqref{equation:I(x)-ambient-metric} of $c_I$. Then,
\[
    \Hol(\nabla^{\mcT}_I) \cong \Hol(\wtg_I) \cong \Hoperator_5 \textrm{,}
\]
where $\nabla^{\mcT}_I$ is the tractor connection of $c_I$ and $\Hoperator_5$ is the $5$-dimensional Heisenberg group.
\end{theoremA}

The local holonomy $\Hol^*_u(\nabla^E)$ of a connection $\nabla^E$ on a vector bundle is defined in the paragraphs after Definition \ref{definition:holonomy} below.

Some algebra shows that these holonomy groups act indecomposably, and so the ambient metrics $\wtg_I$ respectively provide new examples of metric holonomy groups that act indecomposably but not irreducibly; among metric connections this phenomenon can occur only in indefinite signature.

The remainder of this article is organized as follows: Section \ref{section:generic-2-plane-fields} collects some general facts about generic $2$-plane fields $D$ on $5$-manifolds, including about the Cartan curvature tensor $A$. We construct a homogeneous model of such $2$-plane fields using just the algebraic structure of the split octonions, $\wtbbO$; this connects the geometry of these fields to the exceptional Lie group $\G_2$, which can be realized as the automorphism group of $\wtbbO$, and we recall some specific algebraic facts about $\G_2$ relevant to the geometry of the $2$-plane fields $D_I$. We also recall Goursat's quasi-normal form, which locally realizes any $D$ as a differential equation $z' = F(x, y, y', y'', z)$ for some function $F(x, y, p, q, z)$ defined on a neighborhood of the origin in $\bbR^5$ and for which $F_{qq}$ is nowhere zero; we exploit this form in direct computations in Section \ref{section:special-classes}. Section \ref{section:metric-conformal-geometry} gives some basic facts about conformal geometry, including the construction of the conformal tractor bundle and ambient metric, and also recalls and relates various notions of holonomy, including the normal holonomy $\Hol(\nabla^{\mcT})$ of a conformal structure. Section \ref{section:Nurowski-conformal-structures} discusses Nurowski's construction of the conformal structure $c_D$ from a generic $2$-plane field $D$ on a $5$-manifold and describes briefly a manifestly invariant construction of that structure in the language of parabolic geometry, for which we deliberately provide no other background; the standard reference for this topic is \cite{CapSlovak2009}. We give several key results about these conformal structures, including a characterization of them among all conformal structures due to Hammerl and Sagerschnig \cite{HammerlSagerschnig2009} and some facts about holonomy groups associated to them. Finally, in Section \ref{section:special-classes} we give explicit data for the $2$-plane fields $D_I$, including formulas for Ricci-flat ambient metrics $\wtg$ and the parallel objects on the tractor bundle and ambient manifold that guarantee the containment of the indicated holonomy groups in $\Hoperator_5$. We use these data to prove Theorem A, and various other results related to these $2$-plane fields, including that the holonomy of any Ricci-flat representative of $c_I$ is equal to $\bbR^3$. We then consider a class of $2$-plane fields, namely the fields $D_{F(q)}$ for which the function $F$ in the local normal form depends only on $q$, which thus correspond to differential equations $z' = F(y'')$. This class is closely related to that of the $2$-plane fields $D_I$, and we show that the holonomy results we prove for the $2$-plane fields $D_I$ essentially hold for the $2$-plane fields $D_{F(q)}$ too. Finally, we discuss briefly Doubrov and Govorov's striking counterexample, which behaves substantially differently from the $2$-plane fields in the above, but for which we postpone detailed discussion to a later paper.

All objects are smooth by hypothesis except where stated otherwise.

Many computations in this work were done with Maple, and in particular the package \texttt{DifferentialGeometry}. It is a pleasure to thank Ian Anderson, that package's primary author, for hosting the author at Utah State University in March 2012 after the conference ``Differential Geometry of Distributions,'' when some of this work was done, as well as for his assistance with the package. The author thanks Robert Bryant for suggesting to the author Cartan's 1910 paper as a possible source of $2$-plane fields with interesting associated holonomy groups at the October 2010 meeting of the Pacific Northwest Geometry Seminar at the University of Oregon. The author also thanks Mike Eastwood, Ravi Shroff, Dennis The, and the referee for various helpful comments made during the paper's preparation and revision, and Boris Doubrov for a helpful exchange regarding the counterexample to Cartan's classification he produced with Artem Govorov. Support from the Australian Research Council is gratefully acknowledged.

\section{Generic $2$-plane fields on $5$-manifolds}\label{section:generic-2-plane-fields}

\begin{definition}
A $2$-plane field $D$ on a $5$-manifold $M$ is \textbf{generic} if $[D, [D, D]] = TM$.
\end{definition}

If $D$ is generic, then the \textbf{derived plane field} $[D, D]$ has constant rank $3$.

Two generic $2$-plane fields $(M, D)$ and $(\whM, \whD)$ are \textbf{equivalent} if there is a diffeomorphism $\varphi: M \to \whM$ such that $T_x \varphi \cdot D = \whD_{\varphi(x)}$ for all $x \in M$, and they are merely \textbf{locally equivalent} at $x \in M$ and $\whx \in \whM$ if there are neighborhoods $V$ of $x$ and $\whV$ of $\whx$ such that the restricted $2$-plane fields $(V, D\vert_V)$ and $(\whV, \whD\vert_{\whV})$ are equivalent. If $(M, D)$ and $(\whM, \whD)$ are locally equivalent at $x$ and $\whx$ for all $x \in M$ and $\whx \in \whM$, we simply say that the $2$-plane fields are locally equivalent, without reference to a choice of points. When using any of the above notions of equivalence, we may suppress mention of the underlying manifolds when their identities are clear from context.

\begin{definition}
Let $E$ be a $k$-plane field on a smooth manifold $M$. A vector field $\xi \in \Gamma(TM)$ is an \textbf{infinitesimal symmetry} of $E$ if $\mcL_{\xi} \eta \in \Gamma(E)$ for all $\eta \in \Gamma(E)$. The \textbf{(infinitesimal) symmetry algebra} of $E$ is the space $\mfaut(E)$ of infinitesimal symmetries of $E$, and the compatibility of the Lie bracket of vector fields with the Lie derivative ensures that $\mfaut(E)$ is a Lie subalgebra of $\Gamma(TM)$ under that operation.
\end{definition}

\subsection{The Cartan curvature tensor}\label{subsection:Cartan-curvature-tensor}

Cartan constructed the fundamental curvature invariant, which we call the \textbf{Cartan curvature (tensor)}, of a generic $2$-plane field $D$ on a $5$-manifold $(M, D)$: a symmetric quartic form on $D$, that is, an element $\smash{A \in \Gamma(\odot^4 D^*)}$. This form is precisely the harmonic curvature of the type of parabolic geometry that encodes this structure, and it vanishes identically iff $D$ is locally equivalent to the homogeneous model of this geometry described in the next subsection.

By complexifying and projectivizing, at each point $x \in M$ we may regard the roots of $\smash{A_x \otimes \bbC \in \odot^4 D_x^* \ot \bbC}$ as elements of the complex projective line $\bbP(D_x \otimes \bbC)$. If $A_x \neq 0$, then $A_x \ot \bbC$ has exactly four roots counting multiplicity, and we call the partition $\Lambda$ of $4$ given by the multiplicities of the roots the \textbf{root type} of $D$ at $x$. By convention, if $A_x = 0$, we say that $D$ has root type $[\infty]$ at $x$. If $D$ has a given root type $\Lambda$ at all $x \in M$, we just say that $D$ has (constant) root type $\Lambda$.

\subsection{The split octonions and the homogeneous model}\label{subsection:split-octonions-homogeneous-model}

A natural model for generic $2$-plane fields on $5$-manifolds can be efficiently and beautifully realized using an $8$-dimensional real algebra called the split octonions, which in turn is intimately related to several exceptional objects.

Up to isomorphism there are exactly seven \textbf{composition algebras} over $\bbR$, that is, algebras over $\bbR$ with a unit and a nondegenerate bilinear form $\ab{ \cdot , \cdot}$ that satisfies $\ab{xy, xy} = \ab{x, x} \ab{y, y}$ for all elements $x$ and $y$ in the algebra; the facts here about composition algebras are given, for example, in \cite{Harvey1990}, where they are called normed algebras. Four of these are the celebrated normed division algebras, $\bbR$, $\bbC$, $\bbH$, and $\bbO$. The remaining three are the so-called split analogues of the latter three; the largest and richest of these, both algebraically and geometrically, is the \textit{split octonions}, which we denote $\wtbbO$. This algebra has dimension $8$ over $\bbR$, and its bilinear form has signature $(4, 4)$.

We call an element $a$ of a composition algebra $\bbA$ \textbf{imaginary} if $\ab{1, a} = 0$, and we denote the set of such elements by $\ImbbA$. Restricting $\ab{ \cdot, \cdot}$ to $\ImbbA$ defines a nondegenerate bilinear form (which we again denote by $\ab{ \cdot , \cdot}$); we can then define the \textbf{cross product}
\[
    \cdot \times \cdot : \ImbbA \times \ImbbA \to \ImbbA
\]
on that subspace by
\[
    x \times y := -\Imoperator (x y) \textrm{.}
\]
Regarding $\times$ as a $(2, 1)$-tensor on $\ImbbA$ and dualizing (and changing signs, to agree with the convention in \cite{Sagerschnig2006}) defines a $3$-tensor $\Phi \in \otimes^3 \Lambda^3 (\ImbbA)^*$ by
\[
    \Phi(x, y, z) := - \ab{ x \times y , z } = \ab{ x y , z } \textrm{,}
\]
and one can show that it is totally antisymmetric.

We henceforth restrict our attention to $\bbA = \wtbbO$; for further details of most of the constructions in the rest of the subsection and Subsection \ref{subsection:G2}, see \cite{Sagerschnig2006} and \cite{HammerlSagerschnig2009}. Since bilinear form of this algebra has signature $(4, 4)$, the induced bilinear form on $\ImwtbbO$ has signature $(3, 4)$. We can recover all of the above structure on $\wtbbO$ and hence $\ImwtbbO$ from $\Phi$ alone; in particular, the bilinear form on $\ImwtbbO$ satisfies
\begin{equation}\label{equation:cross-product-to-bilinear-form}
    \ab{ x , y } = -\tfrac{1}{6} \tr (z \mapsto x \times (y \times z)) \textrm{.}
\end{equation}
The algebra and geometry of $\wtbbO$ are in some ways richer than that of its analogue, $\bbO$, because the former contains zero divisors; these are exactly the nonzero vectors null with respect to the bilinear form. To exploit this structure, we define the \textbf{(punctured) null cone} to be the set
\[
    \mcN := \set{x \in \ImwtbbO - \set{0} : \ab{x, x} = 0}
\]
of nonzero null vectors in $\ImwtbbO$ and define the \textbf{(null) quadric} to be its projectivization, $\mcQ := \bbP(\mcN)$ (by construction, $\mcQ$ is diffeomorphic to $(\bbS^2 \times \bbS^3) / \bbZ_2$, where the nonidentity element of $\bbZ_2$ acts by the antipodal map simultaneously on the two spheres). For any $x \in \ImwtbbO$ we define the \textbf{(algebraic) annihilator} of $x$ to be the vector space
\[
    \Ann x := \set{y \in \ImwtbbO : x \times y = 0} = \set{y \in \ImwtbbO : \Phi(x, y, \cdot) = 0} \textrm{,}
\]
and one can show that $\dim \Ann x = 3$ if $x$ is nonzero and null and $\Ann x = [x]$ if $x$ is non-null, where $[x]$ denotes the span of $x$.

For $x \in \mcN$, some easy algebra yields the inclusions
\[
    [x] \subset \Ann x \subset (\Ann x)^{\perp} \subset [x]^{\perp} \textrm{,}
\]
which we can together regard as a vector space filtration of $\smash{T_x \mcN = [x]^{\perp}}$. Varying $x$ defines a filtration of the tangent bundle $T\mcN$ by plane fields of constant rank $1$, $3$, $4$, and $6$. This descends to a filtration of the tangent bundle of the quadric:
\[
    \set{0} = D^0 \subset D^{-1} \subset D^{-2} \subset D^{-3} = T\mcQ \textrm{.}
\]
In particular, $\rank D^{-1} = 2$ and $\rank D^{-2} = 3$. We denote $\Delta := D^{-1}$; computing shows that $[\Delta, \Delta] = D^{-2}$ and $[\Delta, [\Delta, \Delta]] = [D^{-1}, D^{-2}] = D^{-3} = T\mcQ$. (Given constant-rank plane fields $V$ and $W$, the set $[V, W]$, which a priori need not have constant rank, is
\[
    [V, W]_p = \set{[\xi, \eta]_p : \xi \in \Gamma(V), \eta \in \Gamma(W)} \textrm{.} )
\]
In particular, $\Delta$ is a generic $2$-plane field on $\mcQ$, and we call the pair the \textbf{homogeneous model} of the geometry of such fields. We say that a generic $2$-plane field on a $5$-manifold is \textbf{locally flat} if it is locally equivalent to $(\mcQ, \Delta)$, and one can show that local flatness is equivalent to the Cartan curvature tensor $A$ being identically zero \cite{Cartan1910}.

\subsection{$\G_2$}\label{subsection:G2}

The Lie algebra of the automorphism group $\Aut \wtbbO$ of the split octonions is simple, has dimension 14, and has indefinite Killing form, so it is the split real form of the simple complex Lie algebra of type $\G_2$. We thus henceforth denote this automorphism group by $\G_2$.

Since $\G_2$ preserves $\set{1}$ and $\ab{\cdot, \cdot}$, it also preserves $\smash{[1]^{\perp} = \ImwtbbO}$ and $\Phi \in \Lambda^3 (\ImwtbbO)^*$. One can show that $\G_2$ is connected; then, since \eqref{equation:cross-product-to-bilinear-form} realizes the bilinear form $\ab{\cdot, \cdot}$ on $\ImwtbbO$ in terms of its algebraic structure, $\G_2$ also preserves that form and hence admits a natural embedding $\G_2 \hookrightarrow \SO(3, 4)$. The automorphism action of $\G_2$ thus restricts to an action on the null cone $\mcN$, and by linearity it descends to an action on the quadric $\mcQ$.

Conversely, the full algebra $\wtbbO$ can be recovered from $\Phi$, and so $\G_2$ is precisely the stabilizer of $\Phi$ under the induced action of $\GL(\ImwtbbO)$ on $\Lambda^3 (\ImwtbbO)^*$. The stabilizer of any other $3$-form in the orbit $\GL(\ImwtbbO) \cdot \Phi$ is a conjugate of $\G_2$ in $\GL(\ImwtbbO)$, and we call the $3$-forms in this orbit \textbf{split-generic}. (The modifier \textbf{generic} indicates that this orbit turns out to be open. In fact, the action of $\GL(\ImwtbbO)$ on $\Lambda^3 (\ImwtbbO)^*$ has exactly two open orbits. The stabilizer of any element in the open orbit that does not contain $\Phi$ is just the compact real form of the complexification $\G_2^{\bbC}$ of $\G_2$, and this compact form can be realized as the algebra automorphism group of the octonions, $\bbO$.)

Given a $7$-dimensional real vector space $\bbV$ and any vector space isomorphism $\tau: \ImwtbbO \to \bbV$, we say that a $3$-form $\Phi \in \Lambda^3 \bbV^*$ is split-generic if and only if $\tau^* \Phi$ is; by construction this characterization does not depend on the choice of $\tau$. So, up to algebra isomorphism we may realize the imaginary split octonions, endowed with the cross product, by giving such a pair $(\bbV, \Phi)$ with $\Phi$ split-generic.

Now, given such a realization $(\bbV, \Phi)$ of the imaginary split octonions, one can realize the induced inner product $\ab{\cdot, \cdot}$ explicitly in terms of the $3$-form $\Phi$. Any $3$-form $\varphi$ on a $7$-dimensional real vector space $\bbV$ induces a symmetric $\Lambda^7 \bbV^*$-valued bilinear form, namely, $(X, Y) \mapsto (X \imult \varphi) \wedge (Y \imult \varphi) \wedge \varphi$. One can show that this form is nondegenerate if and only if $\varphi$ is generic, in which case it distinguishes a nonzero volume form $\vol \in \Lambda^7 \bbV^*$ \cite{Bryant1987, Hitchin2001} and hence yields an $\bbR$-valued bilinear form on $\bbV$: If we regard the bilinear form as a linear map $\otimes^2 \bbV \to \Lambda^7 \bbV^*$, dualizing gives a map $\bbV \to \bbV^* \ot \Lambda^7 \bbV^*$. Its determinant is a map $\det: \Lambda^7 \bbV \to \Lambda^7 (\bbV^* \ot \Lambda^7 \bbV^*) \cong \otimes^8 \Lambda^7 \bbV^*$, and dualizing again gives a map $\det: \bbR \to \otimes^9 \Lambda^7 \bbV^*$ which is nonzero because the bilinear form is nondegenerate, and which we may regard as a distinguished element of $\otimes^9 \Lambda^7 \bbV^*$. Since $\bbV$ is real, there is a unique element $\vol \in \Lambda^7 \bbV^*$ such that $\vol^{\ot 9} = \det$. So, if $\varphi$ is generic, we can define a symmetric bilinear form $\smash{H(\varphi) \in \odot^2 \bbV^*}$ by
\begin{equation}\label{equation:induced-bilinear-form}
    H(\varphi)(X, Y) \vol := \sqrt{6} (X \imult \varphi) \wedge (Y \imult \varphi) \wedge \varphi \textrm{.}
\end{equation}
The factor $\sqrt{6}$ is chosen so that $\vol$ coincides with the volume form induced by the bilinear form for the orientation $\vol$ determines on $\bbV$. The form $H(\varphi)$ is split-generic if and only if the bilinear form has signature $(3, 4)$, and it is generic but not split-generic if and only if the form has signature $(7, 0)$.

For our purposes it will be useful to have a concrete realization of the split octonions, and we borrow a computationally convenient choice from \cite{HammerlSagerschnig2009}: Set $\bbV = \bbR^7$, let $(E_a)$ denote the standard basis on $\bbV$ and $(e^a)$ its dual basis, and define
\begin{multline*}
    \Phi := \tfrac{1}{\sqrt{6}} (-\sqrt{2} e^1 \wedge e^5 \wedge e^6 - e^2 \wedge e^4 \wedge e^5 - e^3 \wedge e^4 \wedge e^6 \\
                + e^1 \wedge e^4 \wedge e^7 - \sqrt{2} e^2 \wedge e^3 \wedge e^7) \textrm{.}
\end{multline*}
The matrix representation in the basis $(E_a)$ of the bilinear form $H(\Phi)$ is
\[
    \left(
        \begin{array}{ccccc}
            0 &    0   &  0 &      0 & 1 \\
            0 &    0   &  0 & \bbI_2 & 0 \\
            0 &    0   & -1 &    0   & 0 \\
            0 & \bbI_2 &  0 &    0   & 0 \\
            1 &    0   &  0 &    0   & 0 \\
        \end{array}
    \right) \textrm{,}
\]
where $\bbI_2$ is the $2 \times 2$ identity matrix. In particular, $H(\Phi)$ nondegenerate and has signature $(3, 4)$, so $\Phi$ is split-generic; we denote the bilinear form by $\ab{X, Y} := H(\Phi)(X, Y)$.

The Lie algebra $\mfg_2$ is the algebra of derivations of the split octonions, which by the discussion earlier in the subsection is just the annihilator of $\Phi$ in the Lie algebra $\mfgl(\bbV)$ of derivations of $\bbV$. Computing gives that, in the basis $(E_a)$, $\mfg_2$ has matrix representation
\begin{equation}\label{equation:g2-representation}
    \left\{
        \left(
            \begin{array}{ccccc}
                \tr A &                 Z        &               s   &                  W      &      0   \\
                    X &                 A        & \sqrt{2} \bbJ Z^T & \frac{s}{\sqrt{2}} \bbJ & -    W^T \\
                    r & -      \sqrt{2} X^T \bbJ &               0   & -\sqrt{2}        Z \bbJ &      s   \\
                    Y & -\frac{r}{\sqrt{2}} \bbJ & \sqrt{2} \bbJ X   & -                A^T    & -    Z^T \\
                    0 & -               Y^T      &               r   & -                X^T    & -\tr A   \\
            \end{array}
        \right)
        : \begin{array}{c} A \in \mfgl(2, \bbR) \\ X, Y \in \bbR^2 \\ Z, W \in (\bbR^2)^* \\ r, s \in \bbR \\ \end{array}
        \right\} \textrm{,}
\end{equation}
where
\[
    \bbJ =
        \left(
            \begin{array}{cc}
                0 & -1 \\
                1 &  0 \\
            \end{array}
        \right) \textrm{.}
\]
One can use \eqref{equation:g2-representation} to show that $\G_2$ acts transitively on the null cone $\mcN$ (and hence the null quadric $\mcQ)$, so we may regard $\mcQ$ as the homogeneous space $\G_2 / P_1$, where $P_1$ is the stabilizer in $\G_2$ of a point in $\mcQ$, that is, of a null line in $\ImwtbbO$. Furthermore, the $2$-plane field $\Delta$ on $\mcQ$ was constructed algebraically from $\Phi$, so it is invariant under this action, and this motivates the moniker \textit{homogeneous model} for $(\mcQ, \Delta)$.

We will be interested in determining the holonomy of certain $7$-dimensional metrics (see Section \ref{section:special-classes}), all of which admit a parallel split-generic $3$-form and two linearly independent null vector fields. (We say that a $3$-form on a smooth $7$-manifold is split-generic if and only if its value at each point is split-generic; if a parallel $3$-form is split-generic at one point, it is split-generic everywhere.) These holonomy groups will be contained in the common stabilizer in $\SO(H(\Phi)) \cong \SO(3, 4)$ of the split-generic $3$-form $\Phi$ and two linearly independent null vectors $x$ and $y$; however, this stabilizer depends on the relative configuration of those three tensors.

\begin{proposition}\label{proposition:stabilizer-two-null-vectors}
Suppose $x, y \in \mcN$, and let $\Phi$ be the split-generic $3$-form that defines the algebraic structure on $\bbV$. The isomorphism type of the common stabilizer $\Stab_{\SO(3, 4)} (\Phi) \cap \Stab_{\SO(3, 4)} (x) \cap \Stab_{\SO(3, 4)} (y)$ is
\[
    \left\{
    \begin{array}{ll}
                 K  , & \textrm{if $[x] = [y]$} \\
        \Hoperator_5, & \textrm{if $\Phi(x, y, \, \cdot \,) = 0 , \, [x] \neq [y]$} \\
              \bbR^3, & \textrm{if $\ab{x, y} = 0, \, \Phi(x, y, \, \cdot \,) \neq 0$} \\
        \SL(2, \bbR), & \textrm{if $\ab{x, y} \neq 0$}
    \end{array}
    \right. \textrm{,}
\]
where $[z]$ denotes the line in $\ImwtbbO$ spanned by the vector $z$, $K$ is the stabilizer of an (arbitrary) vector in $\mcN$, and $\Hoperator_5$ is the $5$-dimensional Heisenberg group.
\end{proposition}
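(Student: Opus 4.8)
The plan is to realize the group in question as a stabilizer inside $\G_2$ and to reduce each of the four cases to a single explicit representative. First I would use that $\G_2 = \Stab_{\SO(3,4)}(\Phi)$ and $\G_2 \subset \SO(3,4)$, so the group is exactly $\Stab_{\G_2}(x) \cap \Stab_{\G_2}(y)$. Both $\ab{x, y}$ and the vanishing or non-vanishing of $\Phi(x, y, \, \cdot \,)$ are $\G_2$-invariant, and together with the condition $[x] = [y]$ these invariants separate the four cases; here $\Phi(x, y, \, \cdot \,) = 0$ is exactly the condition $y \in \Ann x$. Since $\G_2$ acts transitively on $\mcN$, I may normalize $x = E_1$, and a short computation from the explicit $\Phi$ then gives $\Ann E_1 = \langle E_1, E_2, E_3 \rangle$ (a maximal isotropic $3$-plane), $[E_1]^{\perp} = \langle E_1, \dots, E_6 \rangle$, and $\ab{E_1, E_7} \neq 0$.

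With $x = E_1$ fixed I would take $y \in [E_1]$ in case~1, $y = E_2$ in case~2, $y = E_5$ in case~3, and $y = E_7$ in case~4, checking in each instance that the defining invariants hold (for example $E_5$ is null and orthogonal to $E_1$ but has $\Phi(E_1, E_5, \, \cdot \,) \neq 0$). A single representative suffices because $K := \Stab_{\G_2}(E_1)$ acts transitively on each stratum (in cases~3 and~4 after first rescaling $y$ to normalize $\ab{E_1, y}$, which leaves $\Stab(y)$ unchanged); this is cleanest in case~2, where the $\mfsl(2, \bbR)$ block of $K$ acts as the standard representation on $\langle E_2, E_3 \rangle$ and hence transitively on $\langle E_2, E_3 \rangle \setminus \{0\}$, and the other strata are handled the same way in the explicit model. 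Case~1 needs no computation: if $[x] = [y]$ then any element fixing $x$ fixes $y$, so the common stabilizer is $\Stab_{\G_2}(x)$, which is by definition a copy of $K$.

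For cases~2--4 I would read off the stabilizer subalgebra of $\mfg_2$ directly from \eqref{equation:g2-representation} by imposing the linear conditions $M E_1 = 0$ and $M y = 0$ on a general element $M$. The condition $M E_1 = 0$ forces $\tr A = 0$ and $X = Y = 0$, $r = 0$, leaving an $8$-dimensional subalgebra (the $\mfsl(2, \bbR)$ block together with the $Z$, $s$, $W$ directions); imposing $M E_7 = 0$ additionally annihilates $Z$, $W$, and $s$ and leaves exactly the $\mfsl(2, \bbR)$ block, while $M E_5 = 0$ and $M E_2 = 0$ cut this $8$-dimensional algebra down to $3$ and $5$ dimensions respectively. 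It then remains to identify the isomorphism types: the case-4 algebra is $\mfsl(2, \bbR)$, whose exponential is a faithful $\SL(2, \bbR)$ since the block acts as the faithful standard representation on $\langle E_2, E_3 \rangle$; the case-3 algebra must be shown to be abelian, giving the connected group $\bbR^3$; and the case-2 algebra must be shown to be the Heisenberg algebra, that is, $2$-step nilpotent with $1$-dimensional derived algebra equal to its center, giving $\Hoperator_5$.

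The main obstacle is exactly these final bracket computations, and the most delicate is case~2: the $5$-dimensional subalgebra is spanned by one nilpotent element of the $\mfgl(2, \bbR)$ block together with four of the five nilradical directions, and one must check that its commutators span a single line, so that it is genuinely Heisenberg rather than abelian or $3$-step nilpotent. A secondary difficulty is the global structure of each stabilizer: one must confirm that the case-4 group is $\SL(2, \bbR)$ and not a proper quotient such as $\mathrm{PSL}(2, \bbR)$ (the standard representation sees $-\id$, which rules this out) and that no unexpected components appear. Since each stabilizer is a closed subgroup of the connected group $\G_2$, I would settle connectedness by exponentiating the computed subalgebra and checking that it already fills out the claimed group, the abelian and nilpotent cases being immediate because there $\exp$ is a diffeomorphism.
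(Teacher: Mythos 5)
Your proposal is correct and follows essentially the same route as the paper's own proof: reduce to the common stabilizer in $\G_2 = \Stab_{\SO(3,4)}(\Phi)$, normalize $x = e_1$ by transitivity of $\G_2$ on $\mcN$, identify the $K$-orbits on $\mcN$ with the same representatives $e_2$, $e_5$, $e_7$ (and $\lambda e_1$), and read off each stabilizer subalgebra from the matrix realization \eqref{equation:g2-representation}, identifying its isomorphism type by the bracket computations you describe. Your dimension counts ($8$, $5$, $3$, $3$) agree with the paper's \eqref{equation:k-representation} and \eqref{equation:h5-representation}, and your attention to the orbit-transitivity and connectedness points is, if anything, slightly more careful than the paper's ``it is easy to check.''
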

\begin{proof}
By the discussion earlier in this subsection, the stabilizer of $\Phi$ in $\SO(\bbV)$ is $\G_2$, and so the common stabilizer is just the common stabilizer in $\G_2$ of $x$ and $y$.

Since $\G_2$ acts transitively on $\mcN$, we may identify $x$ with the null vector $e_1 \in \bbV$; then, where $K$ is the stabilizer in $\G_2$ of this vector, the common stabilizer is just $\Stab_K (y)$.

Consulting \eqref{equation:g2-representation} shows that the Lie algebra of $K$ is
\begin{equation}\label{equation:k-representation}
    \mfk =
    \left\{
        \left(
            \begin{array}{ccccc}
                0 & Z &               s   &                  W      &      0   \\
                0 & A & \sqrt{2} \bbJ Z^T & \frac{s}{\sqrt{2}} \bbJ & -    W^T \\
                0 & 0 &               0   &      -\sqrt{2}   Z \bbJ &      s   \\
                0 & 0 &               0   &      -           A^T    & -    Z^T \\
                0 & 0 &               0   &                  0      &      0   \\
            \end{array}
        \right)
        : \begin{array}{c} A \in \mfsl(2, \bbR) \\ Z, W \in (\bbR^2)^* \\ s \in \bbR \\ \end{array}
        \right\} \textrm{.}
\end{equation}

Using this realization it is easy to check that the action of $K$ partitions $\mcN$ into the following orbits; we also give a representative of each orbit in terms of the basis $(e_a)$:
\begin{itemize}
    \item singletons $\set{\lambda x}$, $\lambda \in \bbR^*$; $\lambda e_1$
    \item the set $\Ann x - [x]$; $e_2$
    \item the set $\smash{([x]^{\perp} - \Ann x) \cap \mcN}$; $e_5$
    \item hypersurfaces $\set{ z \in \mcN : \ab{x, z} = \lambda}$, $\lambda \in \bbR^*$; $\lambda e_7$.
\end{itemize}
Consulting \eqref{equation:k-representation} gives that the common stabilizer in $\mfk$ of $e_1$ and $e_2$ is
\begin{equation}\label{equation:h5-representation}
    \left\{
        \left(
            \begin{array}{ccccccc}
                0 & 0 & Z_2    &           s   &                     W_1    &                      W_2 &  0    \\
                0 & 0 & a_{12} & -\sqrt{2} Z_2 &                     0      & -\tfrac{1}{\sqrt{2}} s   & -W_1  \\
                0 & 0 & 0      &           0   & \tfrac{1}{\sqrt{2}} s      &                      0   & -W_2  \\
                0 & 0 & 0      &           0   &           -\sqrt{2} Z_2    &                      0   &  s    \\
                0 & 0 & 0      &           0   &                     0      &                      0   &  0    \\
                0 & 0 & 0      &           0   &                     a_{12} &                      0   &  -Z_2 \\
                0 & 0 & 0      &           0   &                     0      &                      0   &  0    \\
            \end{array}
        \right)
        \right\}
        \cong
        \mfh_5
        \textrm{,}
\end{equation}
where $a_{12}, Z_2, s, W_1, W_2 \in \bbR$; this algebra is isomorphic to the $5$-dimensional Heisenberg Lie algebra. This and analogous computations for the three other orbit types give that the isomorphism types of the stabilizers in $K$ of the given orbit representatives (and hence that of arbitrary elements $y$ in their respective orbits) are, respectively, $K$, $\Hoperator_5$, $\bbR^3$, and $\SL(2, \bbR)$.
\end{proof}

\begin{remark}
For each of the four cases in the proposition, one can read some of the information about the above stabilizers directly from the root diagram of $\mfg_2$. By choosing an appropriate Cartan subalgebra, we may identify the stabilizer Lie algebra $\mfk = \mfstab_{\mfg_2} (x)$ as the subalgebra spanned by the root spaces of the roots in the indicated box in the diagram below together with the $1$-dimensional subalgebra of the Cartan subalgebra (which corresponds to the center node) that fixes $x$. In fact, if we identify $x$ with $e_1$ as in the proof, the Cartan subalgebra of $\mfg_2$ \eqref{equation:g2-representation} comprising the diagonal matrices will do.

For any $y \in \Ann x - [x]$, one can choose a basis of $\bbV$ so that, for example, $x = e_1$ (again) and $y = e_2$, and if we again take the Cartan subalgebra to be the set of diagonal matrices, the stabilizer $\smash{\whmfk := \mfstab_{\mfg_2} (y)}$ of $y$ in $\mfg_2$ is the one indicated. The common stabilizer algebra $\mfstab_{\mfg_2} (x) \cap \mfstab_{\mfg_2} (y)$ is just the span of the root spaces in both $\mfk$ and $\smash{\whmfk}$, namely those of the circled roots (in particular, no nonzero element of the Cartan subalgebra stabilizes both $e_1$ and $e_2$); the diagram shows that this algebra is isomorphic to $\mfh_5$.

\begin{center}
\begin{tikzpicture}[scale=1]
\filldraw ( 0  ,  0    ) circle (0.1 );
\draw ( 0  , -1.732) -- (0  ,  1.732);
\filldraw ( 0  , -1.732) circle (0.05);
    \draw ( 0  , -1.732) circle (0.1 );
\filldraw ( 0  ,  1.732) circle (0.05);
\draw (-0.5, -0.866) -- (0.5,  0.866);
\filldraw (-0.5, -0.866) circle (0.05);
\filldraw ( 0.5,  0.866) circle (0.05);
\draw (-1.5, -0.866) -- (1.5,  0.866);
\filldraw (-1.5, -0.866) circle (0.05);
\filldraw ( 1.5,  0.866) circle (0.05);
    \draw ( 1.5,  0.866) circle (0.1 );
\draw (-1  ,  0    ) -- (1  ,  0    );
\filldraw (-1  , -0    ) circle (0.05);
\filldraw ( 1  ,  0    ) circle (0.05);
    \draw ( 1  ,  0    ) circle (0.1 );
\draw (-1.5,  0.866) -- (1.5, -0.866);
\filldraw (-1.5,  0.866) circle (0.05);
\filldraw ( 1.5, -0.866) circle (0.05);
    \draw ( 1.5, -0.866) circle (0.1 );
\draw (-0.5,  0.866) -- (0.5, -0.866);
\filldraw (-0.5,  0.866) circle (0.05);
\filldraw ( 0.5, -0.866) circle (0.05);
    \draw ( 0.5, -0.866) circle (0.1 );

\draw [thick] (-0.25 ,  -1.982) rectangle ( 1.750,  1.982) node[anchor=north east] {$\mfk$};
\draw [thick] (-0.841,  -2.507) -- (-1.841, -0.774) -- ( 1.591,  1.208) -- ( 2.591, -0.525) -- (-0.841, -2.507) node[anchor=south,xshift=2,yshift=1.5] {$\whmfk$};
\end{tikzpicture}
\end{center}

Proceeding analogously for $y$ in the $K$-orbit $\smash{([x]^{\perp} - \Ann x) \cap \mcN}$ and any $K$-orbit $\set{ z \in \mcN : \ab{x, z} = \lambda}$, $\lambda \in \bbR^*$, respectively yields diagrams
\begin{center}
\begin{tabular}{m{4.5cm}m{1cm}m{4.5cm}m{0.1cm}}
\hfill
\begin{tikzpicture}[scale=1]
\clip (-2.592, -2.508) rectangle (1.842, 2.508);
\filldraw ( 0  ,  0    ) circle (0.1 );
\draw ( 0  , -1.732) -- (0  ,  1.732);
\filldraw ( 0  , -1.732) circle (0.05);
    \draw ( 0  , -1.732) circle (0.1 );
\filldraw ( 0  ,  1.732) circle (0.05);
\draw (-0.5, -0.866) -- (0.5,  0.866);
\filldraw (-0.5, -0.866) circle (0.05);
\filldraw ( 0.5,  0.866) circle (0.05);
\draw (-1.5, -0.866) -- (1.5,  0.866);
\filldraw (-1.5, -0.866) circle (0.05);
\filldraw ( 1.5,  0.866) circle (0.05);
\draw (-1  ,  0    ) -- (1  ,  0    );
\filldraw (-1  , -0    ) circle (0.05);
\filldraw ( 1  ,  0    ) circle (0.05);
\draw (-1.5,  0.866) -- (1.5, -0.866);
\filldraw (-1.5,  0.866) circle (0.05);
\filldraw ( 1.5, -0.866) circle (0.05);
    \draw ( 1.5, -0.866) circle (0.1 );
\draw (-0.5,  0.866) -- (0.5, -0.866);
\filldraw (-0.5,  0.866) circle (0.05);
\filldraw ( 0.5, -0.866) circle (0.05);
    \draw ( 0.5, -0.866) circle (0.1 );

\draw [thick] (-0.25 , -1.982) rectangle ( 1.750,  1.982) node[anchor=north east] {$\mfk$};
\draw [thick] (-2.591, -0.525) -- ( 0.841, -2.507) -- ( 1.841, -0.774) -- (-1.591,  1.208) -- (-2.591, -0.525) node[anchor=west, xshift=4, yshift=3] {$\whmfk$};
\end{tikzpicture}
&
\centering
and
&
\begin{tikzpicture}[scale=1]
\filldraw ( 0  ,  0    ) circle (0.1 );
    \draw ( 0  ,  0    ) circle (0.15);
\draw ( 0  , -1.732) -- (0  ,  1.732);
\filldraw ( 0  , -1.732) circle (0.05);
    \draw ( 0  , -1.732) circle (0.1 );
\filldraw ( 0  ,  1.732) circle (0.05);
    \draw ( 0  ,  1.732) circle (0.1 );
\draw (-0.5, -0.866) -- (0.5,  0.866);
\filldraw (-0.5, -0.866) circle (0.05);
\filldraw ( 0.5,  0.866) circle (0.05);
\draw (-1.5, -0.866) -- (1.5,  0.866);
\filldraw (-1.5, -0.866) circle (0.05);
\filldraw ( 1.5,  0.866) circle (0.05);
\draw (-1  ,  0    ) -- (1  ,  0    );
\filldraw (-1  , -0    ) circle (0.05);
\filldraw ( 1  ,  0    ) circle (0.05);
\draw (-1.5,  0.866) -- (1.5, -0.866);
\filldraw (-1.5,  0.866) circle (0.05);
\filldraw ( 1.5, -0.866) circle (0.05);
\draw (-0.5,  0.866) -- (0.5, -0.866);
\filldraw (-0.5,  0.866) circle (0.05);
\filldraw ( 0.5, -0.866) circle (0.05);

\draw [thick] (-0.25 ,  -2.052) rectangle ( 1.750,  1.912) node[anchor=north east] {$  \mfk$};
\draw [thick] ( 0.25 ,  -1.912) rectangle (-1.750,  2.052) node[anchor=north west] {$\whmfk$};
\end{tikzpicture}
\hfill
&
.
\end{tabular}
\end{center}
In the diagram for $\set{ z \in \mcN : \ab{x, z} = \lambda}$, $\lambda \in \bbR^*$, the circle around the center node indicates that the common stabilizer of $x$ and $y$ contains a $1$-dimensional subalgebra of the Cartan subalgebra of $\mfg_2$.
\end{remark}

\begin{remark}
In the language of \cite{BaezHuerta2012}, the four conditions on $x$ and $y$ in the statement of Proposition \ref{proposition:stabilizer-two-null-vectors} are equivalent to the pair $([x], [y]) \in \mcQ \times \mcQ$ of null lines being $0$, $1$, $2$, and $3$ rolls apart, respectively, so the proposition shows in particular that for each $k \in \set{0, 1, 2, 3}$, $\G_2$ acts transitively on the space of pairs of null lines that are $k$ rolls apart, that is, that these are precisely the four orbits of the induced $\G_2$ action on pairs of null lines.
\end{remark}

\subsection{Ordinary differential equations $z' = F(x, y, y', y'', z)$}\label{subsection:ordinary-differential-equations}

Consider a second-order ordinary differential equation in the Monge normal form
\begin{equation}\label{equation:ODE}
    z' = F(x, y, y', y'', z) \textrm{,}
\end{equation}
where $y$ and $z$ are functions of $x$. Introducing coordinates $p$ and $q$ for the $y'$ and $y''$ identifies the partial jet space $J^{0, 2}(\bbR, \bbR^2)$ with $\bbR^5$ (with coordinates $(x, y, p, q, z)$) and realizes the differential equation \eqref{equation:ODE} as the exterior differential system
\begin{equation}\label{equation:local-coframe-partial}
    \left\{
        \begin{array}{ll}
            \omega^1 := dy - p \,dx \\
            \omega^2 := dz - F(x, y, p, q, z) \,dx - F_q(x, y, p, q, z) (dp - q \, dx) \\
            \omega^3 := dp - q \,dx
        \end{array}
    \right.
\end{equation}
on $\bbR^5$: Explicitly, a triple $(x, y(x), z(x))$ is a solution of \eqref{equation:ODE} if and only if its prolongation $(x, y(x), y'(x), y''(x), z(x))$ is an integral curve of the common kernel of \eqref{equation:local-coframe-partial}.

Since $\set{\omega^1, \omega^2, \omega^3}$ is linearly independent, the common kernel $\ker \set{\omega^1, \omega^2, \omega^3}$ is a $2$-plane field $D_F$ on $\dom F$, and checking directly shows that it is generic if and only if $F_{qq}$ is nowhere zero. These defining $1$-forms were chosen so that the derived $3$-plane field satisfies $[D_F, D_F] = \ker \set{\omega^1, \omega^2}$.

Goursat showed that all generic $2$-plane fields arise this way, at least locally.

\begin{lemma}\cite[\S76]{Goursat1922}
Let $D$ be a generic $2$-plane field on a $5$-manifold $M$, and fix $u \in M$. Then, there is a function $F$ defined on an open subset of $\bbR^5$ containing $0$ such that $D$ and $D_F$ are locally equivalent near $u$ and $0$.
\end{lemma}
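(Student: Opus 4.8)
The plan is to produce explicit local coordinates $(x, y, p, q, z)$ centered at $u$ in which $D$ becomes the common kernel of the three $1$-forms in \eqref{equation:quasi-normal-form}; such a chart is exactly a local equivalence carrying $(M, D)$ near $u$ to some $(D_F, \bbR^5)$ near $0$. I would work on the dual side, with the rank-$3$ Pfaffian system $I := D^\perp$. Genericity forces the derived flag of $I$ to have ranks $(3, 2, 0)$: the first derived system $I' = [D, D]^\perp$ has rank $2$ because $[D, D]$ has rank $3$, and $I'' = ([D, [D, D]])^\perp = (TM)^\perp = 0$. The target forms fit this flag, since in \eqref{equation:quasi-normal-form} one has $\langle \omega^1, \omega^2 \rangle = [D_F, D_F]^\perp$ and $\langle \omega^1, \omega^2, \omega^3 \rangle = D_F^\perp$.

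The geometric idea is that the jet-space picture exhibits $D_F$ as living over the Engel distribution on $J^2(\bbR, \bbR) \cong \bbR^4$: forgetting the coordinate $z$ sends $D_F$ to the rank-$2$ distribution $\langle \partial_q,\ \partial_x + p\,\partial_y + q\,\partial_p \rangle$, which has growth vector $(2, 3, 4)$. Accordingly, I would first construct a local submersion $\pi: M \to N$ onto a $4$-manifold, with one-dimensional fibres transverse to $[D, D]$, along which $D$ is projectable; the pushforward $\bar D := \pi_* D$ is then a rank-$2$ distribution, and genericity of $D$ forces $\bar D$ to be bracket-generating with growth $(2, 3, 4)$, that is, an Engel distribution. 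The classical Engel normal form then supplies coordinates $(x, y, p, q)$ near $\pi(u)$ with $\bar D = \ker \{ dy - p\,dx,\ dp - q\,dx \}$, normalized so that the canonical line field of $\bar D$ (the Cauchy characteristic of $[\bar D, \bar D]$) is $\langle \partial_q \rangle$.

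Pulling $x, y, p, q$ back to $M$ and adjoining a fibre coordinate $z$ gives a chart in which $\omega^1 := dy - p\,dx$ and $\omega^3 := dp - q\,dx$ already annihilate $D$. Since $D^\perp$ has rank $3$, it is spanned by $\omega^1, \omega^3$ together with one further generator, which one computes is congruent to $dz - f\,dx - g\,dq$ modulo $\langle \omega^1, \omega^3 \rangle$ for two functions $f, g$ on $M$. The remaining task is to normalize this generator to the required shape $dz - F\,dx - F_q(dp - q\,dx)$, which modulo $\langle \omega^1, \omega^3 \rangle$ reads $dz - F\,dx$: concretely, one must kill the $dq$-coefficient $g$ by exploiting the residual freedom in the choice of fibration and of fibre coordinate, equivalently by arranging that the lift into $D$ of the Engel line $\langle \partial_q \rangle$ is horizontal. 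Once $g \equiv 0$, setting $F := f$ and fixing the $\omega^3$-multiple of the generator by the convention $-F_q$ yields exactly \eqref{equation:quasi-normal-form}; finally, the genericity $[D, [D, D]] = TM$ translates, as already noted for $D_F$, into $F_{qq}$ being nowhere zero, and the chart carries $u$ to $0$ by construction.

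I expect the main obstacle to be the construction of the fibration $\pi$ together with the final normalization: producing a transverse line field along which $D$ descends is a genuine integration problem rather than a formal one, and it is essentially equivalent to the theorem itself. The nontrivial point is that the two \emph{a priori} independent functions $f, g$ governing the last generator can be reduced to the single function $F$; this collapse is what encodes the rigidity of the $(2, 3, 5)$ flag and is where Goursat's analysis of the Pfaff problem does its real work. Once this is achieved, verifying $F_{qq} \neq 0$ is routine.
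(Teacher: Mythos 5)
A preliminary remark: the paper contains no proof of this lemma at all \textemdash\ it is imported from Goursat's 1922 treatise, and the text points to Bryant--Hsu, Kruglikov, and Strazzullo for accessible proofs \textemdash\ so your proposal has to stand on its own, and it does not. The skeleton (fibre $M$ over an Engel $4$-manifold, invoke Engel's normal form below, then normalize the remaining generator of $D^{\perp}$ upstairs) is a legitimate way to organize the argument, but the step carrying the entire content is precisely the one you assume: the existence of a local submersion $\pi: M \to N^4$, equivalently of a line field $V$ with $V \cap [D,D] = 0$ and $[\Gamma(V), \Gamma(D)] \subset \Gamma(D \oplus V)$, along which $D$ descends. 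You never construct $V$, and you concede yourself that producing it is ``essentially equivalent to the theorem itself.'' An argument that reduces the statement to an unproven assertion of the same strength is circular; as written this is a plan, not a proof. Note also that this gap cannot be closed formally or algebraically: in the Engel case the derived distribution $[\bar D, \bar D]$ carries a canonical line field (its Cauchy characteristic, which lies inside $\bar D$), but for a generic $2$-plane field on a $5$-manifold the Cauchy-characteristic space of $[D,D]$ is zero (this follows already from the symbol algebra, since $\Lambda^2 [D,D] \to TM/[D,D]$ has one-dimensional kernel spanned by $\Lambda^2 D$), so there is no canonical candidate for $V$, and its existence must be won by integration. That integration is exactly where the cited sources do their work \textemdash\ for instance, by producing a $1$-form $\theta \in \Gamma([D,D]^{\perp})$ whose Pfaff equation has class $3$ (i.e. $\theta \wedge d\theta \neq 0$ but $\theta \wedge (d\theta)^2 = 0$), so that the Pfaff--Darboux theorem writes $\theta$ up to scale as $dy - p\,dx$ and the coordinate system can be seeded from the functions $x, y, p$.

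By contrast, the end-game that you identify as ``where Goursat's analysis does its real work'' is in fact routine. Granting $\pi$ and Engel coordinates, the third generator of $D^{\perp}$ is $dz - f\,dx - g\,dq$ modulo $\langle \omega^1, \omega^3 \rangle$, so that $D = \langle \partial_q + g\,\partial_z,\ \partial_x + p\,\partial_y + q\,\partial_p + f\,\partial_z \rangle$; replacing $z$ by a first integral $\tilde z$ of $W := \partial_q + g\,\partial_z$ with $\partial \tilde z / \partial z \neq 0$ (which exists by the flow-box theorem applied to $W$) turns $W$ into $\partial_q$, i.e. kills $g$ outright, after which setting $F := f$ gives exactly \eqref{equation:quasi-normal-form}, and $F_{qq} \neq 0$ follows from the diffeomorphism-invariance of the genericity condition. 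So the collapse of the pair $(f, g)$ to the single function $F$ is not where the rigidity of the $(2,3,5)$ flag is encoded; the rigidity, and the missing content of your proof, lies entirely in the existence of the fibration.
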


For accessible proofs of this lemma, see \cite[p.~2.6]{BryantHsu1993}, which proves a generalization of the lemma to manifolds of dimension $5$ and higher, or see \cite[Theorem 3]{Kruglikov2011} or \cite[\S3.3]{Strazzullo2009}.

There is some redundancy in the choice of $F$ in this lemma---somewhat more precisely, different functions $F$ can yield equivalent $2$-plane fields $D_F$---so we refer to \eqref{equation:local-coframe-partial} only as the \textbf{local Monge (quasi-)normal form} for generic $2$-plane fields on $5$-manifolds. The field $D_{q^2}$ is locally flat \cite{Cartan1910}.

For later use, we augment \eqref{equation:local-coframe-partial} with two auxiliary forms to produce a local coframe $(\omega^a)$ of $TM$:
\begin{equation}\label{equation:local-coframe}
    \left\{
        \begin{array}{l}
            \omega^1 := dy - p \,dx \\
            \omega^2 := dz - F(x, y, p, q, z) \,dx - F_q(x, y, p, q, z) (dp - q \, dx) \\
            \omega^3 := dp - q \,dx \\
            \omega^4 := dq \\
            \omega^5 := dx \textrm{.}
        \end{array}
    \right.
\end{equation}
The frame $(E_a)$ of $TM$ dual to $(\omega^a)$ is
\begin{equation}\label{equation:local-frame}
    \left\{
        \begin{array}{l}
            E_1 := \partial_y \\
            E_2 := \partial_z \\
            E_3 := \partial_p + F_q(x, y, p, q, z) \partial_z \\
            E_4 := \partial_q \\
            E_5 := \partial_x + p \partial_y + q \partial_p + F(x, y, p, q, z) \partial_z \textrm{.}
        \end{array}
    \right.
\end{equation}
In particular,
\[
    D_F = \ker\set{\omega^1, \omega^2, \omega^3} = \ab{E_4, E_5} = \ab{\partial_q, \partial_x + p \partial_y + q \partial_p + F(x, y, p, q, z) \partial_z} \textrm{.}
\]

\section{Some metric and conformal geometry}\label{section:metric-conformal-geometry}

Conformal geometry is the geometry of smooth manifolds in which one has a notion of angle but (in particular) not of length.

\begin{definition}
A \textbf{conformal structure} on a smooth manifold $M$ is an equivalence class $c$ of (pseudo-Riemannian) metrics under the relation $\sim$, where $g \sim \whg$ if and only if $\whg = \Omega^2 g$ for some positive function $\Omega \in C^{\infty}(M)$, and the pair $(M, c)$ is called a \textbf{conformal manifold}. Any metric $g \in c$ is a \textbf{(conformal) representative} of $c$. The \textbf{signature} $(p, q)$ of a conformal structure $c$ is just the signature of any (equivalently, every) conformal representative.
\end{definition}

An \textbf{infinitesimal symmetry} of a conformal structure $c$ on an $n$-manifold $M$, (alternatively, a \textbf{conformal Killing field} on $(M, c)$) is a vector field $\xi \in \Gamma(TM)$ that preserves the conformal structure in the sense that for a representative metric $g \in c$, $\mcL_{\xi} g = \lambda g$ for some $\lambda \in C^{\infty}(M)$. Taking traces gives that this condition is equivalent to $\tf(\mcL_{\xi} g) = 0$, where $\tf(S)$ denotes the tracefree part $S_{ab} - \frac{1}{n} S_c^{\phantom{c} c} g_{ab}$ of $S$, and direct computation shows that this condition is independent of the choice of representative $g$. We denote the space of infinitesimal symmetries of $c$ by $\mfaut(c)$; checking shows that it is closed under the Lie bracket of vector fields, so it is a Lie subalgebra under that operation.

The \textbf{metric bundle} of the conformal structure $c$ on a smooth manifold $M$ is the ray bundle $\pi: \mcG \to M$ defined by
\[
    \mcG := \coprod_{x \in M} \set{g_x : g \in c} \textrm{.}
\]
By construction, the sections of $\mcG$ are precisely the representative metrics $g$ of $c$. The action $\bbR_+ \times \mcG \to \mcG$ defined by $s \cdot g_x = \delta_s(g_x) := s^2 g_x$ naturally realizes $\mcG$ as a principal $\bbR_+$-bundle; we denote the infinitesimal generator of this dilation by $\bbT := \partial_s \delta_s \vert_{s = 1}$. The metric bundle admits a tautological degenerate, symmetric $2$-tensor $\smash{\mbg_0 \in \Gamma(\odot^2 T^* \mcG)}$ defined by $(\mbg_0)_{g_x} (\xi, \eta) := g_x(T_{g_x} \pi \cdot \xi, T_{g_x} \pi \cdot \eta)$, which we may identify with the conformal structure itself.

Fixing a representative $g \in c$ yields a trivialization $\mcG \cong M \times \bbR_+$ by identifying the inner product $t^2 g_x$ with $(x, t)$. In this trivialization, the tautological $2$-tensor is $\mbg_0 = t^2 \pi^* g$, the dilations are given by $\delta_s: (t, x) \mapsto (s t, x)$, and the infinitesimal generator is $\bbT = t \partial_t$.

For any $w \in \bbR$ the \textbf{conformal density bundle of weight $w$} is the bundle $\mcD[w] := \mcG \times_{\rho_{-w}} \bbR$ associated to $\mcG$ by the $\bbR_+$-representation $\rho_{-w}(y) := s^{-w} y$. We may identify this bundle with $\coprod_{x \in M} \set{f : \mcG_x \to \bbR : \delta_s^* f = s^w f, \, s \in \bbR_+}$ and hence its sections with real-valued functions on $\mcG$ of homogeneity $w$ (with respect to the dilations $\delta_s$). A choice of representative $g \in c$ induces a trivialization of each density bundle $\mcD[w]$ by identifying $f \in \Gamma(\mcD[w])$ with $f \circ g \in C^{\infty}(M)$, where we regard $g$ as a section $M \to \mcG$.

Given a vector bundle $E \to M$ and any $w \in \bbR$, we may form a conformally weighted vector bundle $E[w] := E \otimes \mcD[w]$, and again a choice of representative trivializes any such bundle by identifying $v \otimes f \in \Gamma(E[w])$ with $(f \circ g) v \in \Gamma(E)$. By construction, $\mbg_0$ satisfies $\delta_s^* \mbg_0 = s^2 \mbg_0$ and depends only on the $T\pi$-fibers of its arguments, and unwinding definitions shows that we may view the conformal structure itself as a canonical section $\mbg$ of $\smash{\odot^2 T^* M [2]}$.

The class of conformal structures considered in this work all admit an additional special structure called an almost Einstein scale.

A metric $g$ on an $n$-manifold $M$ is \textbf{Einstein} if $\Ric = 2 \lambda (n - 1) g$ for some smooth function $\lambda \in C^{\infty}(M)$. If $n \geq 3$ and $M$ is connected, then $\lambda$ is necessarily constant; this is the \textbf{Einstein constant} of $g$, though this term is usually used for the full coefficient $2 \lambda (n - 1)$.

\begin{definition}
Suppose $(M, c)$ is a conformal manifold. An \textbf{Einstein scale} for $c$ is a (nonvanishing) weighted smooth function $\sigma \in \Gamma(\mcD[1])$ such that the (unweighted) metric $\sigma^{-2} \mbg$ is Einstein, and if $c$ admits an Einstein scale, we say that $c$ is \textbf{(conformally) Einstein}. A weighted smooth function $\sigma \in \Gamma(\mcD[1])$ with zero set $\Sigma$ is an \textbf{almost Einstein scale} for $c$ if $\sigma^{-2} \mbg\vert_{M - \Sigma}$ is Einstein. We denote the space of almost Einstein scales of $c$ by $\mbaEs(c)$. If $\dim M \geq 3$ and $M$ is connected, and if the almost Einstein scale $\sigma$ is not identically zero, every restriction of $\sigma^{-2} \mbg\vert_{M - \Sigma}$ to a connected component of $M - \Sigma$ has the same Einstein constant, which we hence call the \textbf{Einstein constant} of $\sigma$; if $\lambda = 0$, we say that $\sigma$ is an \textbf{almost Ricci-flat scale}. (For expository convenience, we declare the identically zero almost Einstein scale to be almost Ricci-flat too.)
\end{definition}

\subsection{Conformal tractor and ambient geometry}\label{subsection:conformal-tractor-ambient-geometry}

The conformal tractor bundle is a construction that encodes a conformal manifold $(M, c)$ of signature $(p, q)$ in a rank-$(p + q + 2)$ bundle $\mcT \to M$ endowed with some auxiliary structure. The closely related ambient metric construction assigns to $(M, c)$ a pseudo-Riemannian manifold $(\wtM, \wtg)$ of signature $(p + 1, q + 1)$; this latter construction involves some choices, but the nonuniqueness in its construction is manageable. Invariant data extracted from either construction (and which, in the case of the ambient metric, do not depend on any choices made) are invariants of the underlying conformal structure and hence can be used to analyze that structure; indeed, this was the original motivation for the construction of the ambient metric \cite{FeffermanGraham1985}.

We first describe the ambient metric associated to $c$ (for $n$ odd, which is all that we need here), following the standard reference \cite{FeffermanGraham2011}, and then use it to construct the standard conformal tractor bundle as described in \cite{CapGover2003}. The conformal tractor construction was first given \cite{Thomas1926} and was rediscovered and extended in \cite{BaileyEastwoodGover1994} using an approach different from the one here, and it can also be described in the language of parabolic geometries \cite{CapSlovak2009}. The discussion in \cite[\S 2]{GrahamWillse2012G2} is similar to the one below but includes the case of even $n$.

Henceforth in this subsection, $(M, c)$ is a conformal structure of dimension $n \geq 3$ and signature $(p, q)$. Consider the space $\mcG \times \bbR$, and denote the standard coordinate on $\bbR$ by $\rho$. The dilations $\delta_s$ extend to $\mcG \times \bbR$ by acting on the $\mcG$ factor, that is, by $\delta_s(g_x, \rho) = s \cdot (g_x, \rho) := (s^2 g_x, \rho)$, and we again denote its infinitesimal generator $\bbT := \partial_s \delta_s \vert_{s = 1}$. The map $\mcG \hookrightarrow \mcG \times \bbR$ defined by $z \mapsto (z, 0)$ embeds $\mcG$ as a hypersurface in $\mcG \times \bbR$, and we identify $\mcG$ with its image, $\mcG \times \set{0}$, under this map. Again a choice of representative $g$ induces a trivialization $\mcG \times \bbR \leftrightarrow \bbR_+ \times M \times \bbR$ by identifying $(t^2 g_x, \rho) \leftrightarrow (t, x, \rho)$, which defines an embedding $M \hookrightarrow \mcG \times \bbR$ by $x \mapsto (1, x, 0)$ and yields an identification $T(\mcG \times \bbR) \cong \bbR \oplus TM \oplus \bbR$.

A smooth metric $\wtg$ of signature $(p + 1, q + 1)$ on an open neighborhood $\wtM$ of $\mcG$ in $\mcG \times \bbR$ invariant under the dilations $\delta_s$, $s \in \bbR_+$, is a \textbf{pre-ambient metric} for $(M, c)$ if (1) it extends $\mbg_0$, that is, if $\iota^* \wtg = \mbg_0$, and (2) if it has the same homogeneity as $\mbg_0$ with respect to the dilations, that is, if $\delta_s^* \wtg = s^2 \wtg$ (again, for all $s$). A pre-ambient metric is \textbf{straight} if for all $p \in \wtM$ the parametrized curve $s \mapsto s \cdot p$ is a geodesic. Any nonempty conformal structure admits many pre-ambient metrics; Cartan's normalization condition for a conformal connection \cite{Cartan1923} suggests that Ricci-flatness is a natural distinguishing criterion.

\begin{definition}
Let $(M, c)$ be a conformal manifold of odd dimension at least $3$. An \textbf{ambient metric} for $(M, c)$ is a straight pre-ambient metric $\wtg$ such that $\Ric(\wtg)$ is $O(\rho^{\infty})$; the pair $(\wtM, \wtg)$ is an \textbf{ambient manifold} for $(M, c)$.
\end{definition}

Here, we say that a tensor field on $\wtM$ is $O(\rho^\infty)$ if it vanishes to infinite order at each point of the zero set of $\rho$.  We formulate Fefferman-Graham's existence and uniqueness results for ambient metrics of odd-dimensional conformal structures as follows:

\begin{theorem}\cite{FeffermanGraham2011}
Let $(M, c)$ be a conformal manifold of odd dimension at least $3$. There exists an ambient metric for $(M, c)$, and it is unique up to pullback by diffeomorphisms that restrict to $\id_{\mcG}$ and up to infinite order: If $\wtg_1$ and $\wtg_2$ are ambient metrics for $(M, c)$, then (after possibly restricting the domains of both to appropriate open neighborhoods of $\mcG$ in $\wtM$) there is a diffeomorphism $\phi$ such that $\phi\vert_{\mcG} = \id_{\mcG}$ and $\phi^* \wtg_2 - \wtg_1$ is $O(\rho^{\infty})$.
\end{theorem}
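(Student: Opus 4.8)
The plan is to prove both existence and uniqueness by gauge-fixing an arbitrary ambient metric to a \emph{normal form} relative to a chosen representative, and then solving the Ricci condition $\Ric(\wtg) = O(\rho^\infty)$ as a formal power series in $\rho$; oddness of $n$ is exactly what guarantees that this series can be solved to all orders without obstruction.

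First I would establish the normal form. Fix a representative $g \in c$ and work in the induced trivialization $\wtM \subseteq \bbR_+ \times M \times \bbR$ with coordinates $(t, x, \rho)$, so that $\iota^* \wtg = \mbg_0 = t^2 \pi^* g$ along $\set{\rho = 0}$ and $\delta_s^* \wtg = s^2 \wtg$. The straightness hypothesis makes the dilation orbits $s \mapsto s \cdot p$ geodesics; choosing the $\rho$-coordinate by flowing along the geodesics of $\wtg$ issuing from $\mcG$ in the normal direction (a diffeomorphism restricting to $\id_{\mcG}$) removes the remaining cross terms and brings $\wtg$ to the Fefferman--Graham normal form
\[
    \wtg = 2 \rho \, dt^2 + 2 t \, dt \, d\rho + t^2 g_\rho \textrm{,}
\]
where $g_\rho = g_\rho(x)$ is a one-parameter family of metrics on $M$ with $g_0 = g$. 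The problem is thereby reduced to determining $g_\rho$.

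Second I would compute $\Ric(\wtg)$ in this normal form. A direct but lengthy calculation expresses its components through $g_\rho$, the derivatives $g_\rho'$ and $g_\rho''$, and the slice Ricci tensor $\Ric(g_\rho)$; the condition $\Ric(\wtg) = O(\rho^\infty)$ then amounts to one tensorial second-order equation for $g_\rho$ together with additional equations controlling the trace $\tr_{g_\rho} g_\rho'$. Expanding $g_\rho = \sum_{k \geq 0} \rho^k g^{(k)}$ and differentiating at $\rho = 0$, I would show that the tensorial equation is governed by an indicial operator with exceptional root $n/2$: concretely, $g^{(k)}$ is determined from $g^{(0)} = g$ and the lower-order coefficients up to a factor proportional to $n - 2k$. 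For odd $n$ this factor never vanishes (equivalently $n/2 \notin \bbZ$), so the recursion pins down every coefficient uniquely; in even dimension it would vanish at $k = n/2$, producing the Fefferman--Graham obstruction tensor and forcing the hypothesis $n$ odd.

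Finally, for existence I would apply Borel's lemma to realize a smooth family $g_\rho$ with the prescribed Taylor coefficients $g^{(k)}$; the associated $\wtg$ is then a straight pre-ambient metric with $\Ric(\wtg) = O(\rho^\infty)$. For uniqueness I would argue in two steps: the order-by-order determination shows that any two ambient metrics already in normal form relative to the same $g$ share all Taylor coefficients and hence differ by $O(\rho^\infty)$; and any straight pre-ambient metric can be brought to normal form by a diffeomorphism restricting to $\id_{\mcG}$, with normal forms relative to $g$ and $\Omega^2 g$ interchanged by a further such diffeomorphism. Composing these gives the stated uniqueness. The hard part is the second step above: carrying out the explicit curvature computation in the normal form and isolating the solvability factor $n - 2k$, together with separating the tracefree and trace parts of each $g^{(k)}$; the gauge-fixing and the passage from formal to smooth solution are comparatively routine.
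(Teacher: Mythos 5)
This theorem is quoted in the paper from Fefferman and Graham's monograph \cite{FeffermanGraham2011}; the paper itself supplies no proof, so there is nothing internal to compare against. Your sketch---gauge-fixing a straight pre-ambient metric to the normal form $2\rho\,dt^2 + 2t\,dt\,d\rho + t^2 g_\rho$, deriving the order-by-order recursion whose solvability factor is proportional to $n - 2k$ (nonvanishing precisely because $n$ is odd, with the obstruction tensor appearing at $k = n/2$ in even dimensions), invoking Borel's lemma to pass from the formal to a smooth solution, and obtaining uniqueness by comparing normal forms relative to conformally related representatives---is exactly the argument of the cited source, and it is correct in its essentials.
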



We now recover the standard tractor bundle from the ambient construction. Let $\wtg$ be an ambient metric for $(M, c)$. Since $\wtg$ is straight, the fiber $\mcG_x$ of $\mcG \to M$ is a geodesic of $\wtg$ (with geodesic parametrization $s \mapsto s \cdot g_x$ for any $g_x \in \mcG_x$). Direct computation shows that a vector field $\xi$ along $\mcG_x$ in $\wtM$ is parallel if and only if $T\delta_s \cdot \xi = s \xi$ for all $s \in \bbR_+$ (see \cite[\S2]{GrahamWillse2012G2} for details). We define the standard tractor bundle to be the bundle whose smooth sections are vector fields along $\mcG$ in $\wtM$ with this homogeneity with respect to $\delta_s$.

\begin{definition}
Let $(M, c)$ be a conformal manifold. The \textbf{(standard conformal) tractor bundle} is the bundle $\pi: \mcT \to M$ defined by
\[
    \mcT = \coprod_{x \in M} \set{\xi \in \Gamma(T\wtM\vert_{\mcG_x}) : T\delta_s \cdot \xi = s \xi, s \in \bbR_+} \textrm{.}
\]
We call a section of $\mcT$ a \textbf{(standard) (conformal) tractor (field)}.
\end{definition}

We construct some additional natural objects on $\mcT$. The sections of $\mcT$ are the vector fields on $T\mcG$ that satisfy $\delta_s^* X = s^{-1} X$; since by definition $\delta_s^* \wtg = s^2 \wtg$, if $X, Y \in \mcT_x$, then $\wtg(X, Y)$ is constant. So, the restriction of $\wtg$ to $\mcG$ defines a fiber metric $g^{\mcT}$ (of signature $(p + 1, q + 1)$) on $\mcT$. Now, $\bbT$ has homogeneity $0$, so we may regard it as a section of $\mcT[1]$, and the span of $\bbT \in \Gamma(T \mcG)$ is invariant under $\delta_s$, so it descends to a distinguished line subbundle of $\mcT$, which by mild abuse of notation we call $[\bbT]$.

The Levi-Civita connection $\wtnabla$ of the ambient metric defines the \textbf{tractor connection} $\nabla^{\mcT}$ on $\mcT$ as follows. First note that the map $T\pi: T\mcG \to TM$ induces a realization of the tangent bundle by
\[
    T_x M = \set{\eta \in \Gamma(T\mcG\vert_{\mcG_x}) : T\delta_s \cdot \eta = \eta, \, s \in \bbR_+} / [\bbT\vert_{\mcG_x}] \textrm{.}
\]
Now, for $X \in \Gamma(\mcT)$ and $\xi \in \Gamma(TM)$, define
\[
    \nabla^{\mcT}_{\xi} X := \wtnabla_{\olxi} X \textrm{,}
\]
where $\olxi$ is an arbitrary lift of $\xi$ to $\set{\eta \in \Gamma(T\mcG\vert_{\mcG_x}) : T\delta_s \cdot \xi = \xi, \, s \in \bbR_+}$ in the above realization of $TM$. Direct computation verifies that the right-hand side is independent of the choice of lift, that it is a section of $\mcT$, and that $\nabla^{\mcT}$ is a vector bundle connection. This construction of the conformal tractor bundle depends on the choice of ambient metric $\wtg$, but different choices yield equivalent constructions. A proof that this construction is equivalent to the standard tractor bundle of \cite{BaileyEastwoodGover1994} is given in \cite{CapGover2003}. It is a direct consequence of the compatibility of the ambient metric with its Levi-Civita connection that the tractor metric and tractor connection are likewise compatible in that they satisfy $\nabla^{\mcT} \mbg^{\mcT} = 0$.

Given a tractor $\chi \in \Gamma(\mcT)$, counting homogeneities shows that we may regard $\mbg(\chi, \bbT)$ as a section of $\mcD[1]$, so this defines a tensorial \textbf{canonical projection} $\Pi_0: \Gamma(\mcT) \to \Gamma(\mcD[1])$; By construction, the kernel of this map is the space of sections of $\smash{\Gamma([\bbT]^{\perp})}$, so $\Pi_0$ descends to a natural bundle isomorphism $\smash{\mcT / [\bbT]^{\perp} \cong \mcD[1]}$. Conversely, there is a natural map $L_0: \Gamma(\mcD[1]) \to \Gamma(\mcT)$, called the \textbf{BGG splitting operator}, which is not tensorial but depends on the $2$-jet of a section of $\mcD[1]$, that satisfies $L_0(\Pi_0(\chi)) = \chi$ for any parallel tractor $X \in \Gamma(\mcT)$ (see, for example, \cite[\S\S2.5, 3.2]{HammerlSagerschnig2009}). In particular, any parallel tractor is determined by its image under the canonical projection. Furthermore, this image admits a natural geometric interpretation.

\begin{proposition}
The restrictions of the maps $\Pi_0$ and $L_0$ are isomorphisms
\begin{equation}\label{equation:bijection-parallel-tractors-almost-Einstein-scales}
    \set{\textrm{parallel tractors}} \mathop{\rightleftarrows}^{\Pi_0}_{L_0} \set{\textrm{almost Einstein scales}}
\end{equation}
of vector spaces.
\end{proposition}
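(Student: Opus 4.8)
The plan is to establish a correspondence between parallel tractors and almost Einstein scales by exploiting the ambient-geometric construction of the tractor bundle together with the Ricci-flatness condition that distinguishes the ambient metric. The key conceptual point is that a parallel tractor field on $\mcT$ is, by our construction, precisely a homogeneous vector field along $\mcG$ in $\wtM$ that is parallel with respect to the ambient Levi-Civita connection $\wtnabla$; thus parallel tractors correspond exactly to $\wtnabla$-parallel vector fields $\wtX$ on $\wtM$ (at least to infinite order along $\mcG$) with the correct homogeneity $\delta_s^* \wtX = s^{-1} \wtX$. On the other side, an almost Einstein scale $\sigma \in \Gamma(\mcD[1])$ corresponds to a homogeneity-$1$ function on $\mcG$, which extends to a homogeneous function on $\wtM$, and the almost-Einstein condition translates into a second-order equation whose solutions are exactly the functions whose ambient Hessian is (pure-trace, hence) controlled by the Ricci-flat structure.

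First I would set up the ambient picture explicitly: given a parallel tractor $X$, realize it as a parallel vector field $\wtX$ along $\mcG$, and compute $\Pi_0(X) = \mbg(X, \bbT)$, which by the homogeneity count lands in $\Gamma(\mcD[1])$ and equals the ambient inner product $\wtg(\wtX, \bbT)$ restricted to $\mcG$. The first substantive step is to show that this projection $\sigma := \Pi_0(X)$ is an almost Einstein scale. For this I would differentiate $\sigma = \wtg(\wtX, \bbT)$ using $\wtnabla \wtX = 0$ and the identity $\wtnabla \bbT = \id$ (which follows because $\bbT$ is the Euler/dilation field of the straight pre-ambient metric), obtaining that the ambient Hessian of $\sigma$ is pure trace; restricting to a representative metric $g$ and unwinding the standard ambient-to-tractor dictionary then yields the tractor equation $\nabla^{\mcT} L_0(\sigma) = 0$, equivalently the conformally invariant almost-Einstein equation $\tf(\nabla^2 \sigma + \mathsf{P}\sigma) = 0$, where the Ricci-flatness of $\wtg$ is exactly what forces the trace-free Hessian to vanish. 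Conversely, given an almost Einstein scale $\sigma$, I would form $X := L_0(\sigma)$ via the BGG splitting operator and verify $\nabla^{\mcT} X = 0$ by the same Hessian computation run in reverse.

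The two maps are mutually inverse by the identity $L_0(\Pi_0(X)) = X$ for parallel $X$, already recorded in the excerpt, together with $\Pi_0(L_0(\sigma)) = \sigma$, which holds because $\Pi_0 \circ L_0 = \id$ on all of $\Gamma(\mcD[1])$ (the splitting operator is a right inverse of the canonical projection by construction); linearity of both $\Pi_0$ and $L_0$ makes the bijection an isomorphism of vector spaces, completing the proof. I expect the main obstacle to be the careful translation of the ambient Ricci-flatness and the homogeneity conditions into the trace-free tractor/almost-Einstein equation: one must track weights and the $O(\rho^\infty)$ ambiguity in $\wtg$ precisely enough to see that the pure-trace part of the ambient Hessian is harmless while the trace-free part is what encodes the Einstein condition. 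This is essentially the content of the standard correspondence of Bailey--Eastwood--Gover, so rather than reprove it from scratch I would invoke that the parallel sections of $(\mcT, \nabla^{\mcT})$ solving the relevant first-order BGG equation are in bijection with its solutions $\sigma$, and identify those solutions with almost Einstein scales via the Einstein condition $\sigma^{-2}\mbg$ being Einstein off the zero set $\Sigma$ of $\sigma$.
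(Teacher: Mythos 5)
Your proposal is sound, but note that the paper itself gives no proof of this proposition: it is stated as a quotation from the literature, with the surrounding text citing \cite{HammerlSagerschnig2009} for the identity $L_0(\Pi_0(\chi)) = \chi$ on parallel tractors and \cite{BaileyEastwoodGover1994} for the fact that almost Einstein scales are precisely the weighted functions whose prolongations are $\nabla^{\mcT}$-parallel. Your closing appeal to Bailey--Eastwood--Gover therefore lands exactly where the paper does, and your ambient-geometric sketch goes beyond it; that sketch is the standard derivation and does work: one differentiates $\sigma = \wtg(X, \bbT)$ in directions tangent to $\mcG$, uses $\wtnabla \bbT = \id$ (straightness), and uses the Ricci-flat normalization of $\wtg$ (which is what places the Schouten tensor in the Christoffel symbols of $\wtg$ along $\mcG$) to identify the resulting equation with $\tf(\nabla_a \nabla_b \sigma + \mathsf{P}_{ab}\sigma) = 0$. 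The one claim you should delete is that, ``by our construction,'' parallel tractors are precisely the homogeneous $\wtnabla$-parallel vector fields along $\mcG$, even to infinite order: the tractor connection differentiates only in directions tangent to $\mcG$, so a $\nabla^{\mcT}$-parallel section is by definition parallel only in those directions, and promoting it to an ambient-parallel field to infinite order along $\mcG$ is a substantive theorem in the spirit of Theorem \ref{theorem:CGGH}, not a definition. Fortunately your argument never uses transverse parallelism of $X$: in the Hessian computation the transverse (second-fundamental-form) terms pair $X$ itself against the normal components of $\wtnabla_{\overline{\eta}}\,\olxi$, so only parallelism along $\mcG$ is needed, and the rest of your proof---mutual inversion via $\Pi_0 \circ L_0 = \id$ and $L_0(\Pi_0(\chi)) = \chi$ for parallel $\chi$, plus linearity of both maps---stands as written.
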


In fact, the first modern formulation \cite{BaileyEastwoodGover1994} of the tractor bundle constructs $\mcT$ and $\nabla^{\mcT}$ so that almost Einstein scales are exactly the (weighted) smooth functions whose suitable prolongations yield sections of $\mcT$ parallel with respect to $\nabla^{\mcT}$.

If $\sigma$ is a nonzero almost Einstein scale of Einstein constant $\lambda$, the parallel tractor $L_0(\sigma)$ satisfies $\ab{L_0(\sigma), L_0(\sigma)} = -2 \lambda$ (this is immediate using a trivialization of the tractor bundle that we do not describe here; see, for example, \cite{Leistner2006}). In particular, $L_0(\sigma)$ is null if and only if $\sigma$ is an almost Ricci-flat scale.

\subsection{Holonomy}\label{subsection:holonomy}

We briefly review and relate several notions of holonomy; see \cite[Chapter 2]{KobayashiNomizu1996} for a more detailed discussion.

Let $E \to N$ be a vector bundle of rank $k$ over a field $\bbF$, and let $\nabla^E : \Gamma(E) \to \Gamma(E \ot T^* N)$ be a general connection on $E$. For any piecewise smooth curve $\gamma: [0, 1] \to N$, a local section $\sigma$ of $E$ is parallel along $\gamma$ if $\nabla^E_{\gamma'} \sigma = 0$. This is a first-order linear ordinary differential equation with piecewise smooth coefficients, so we define the linear \textbf{parallel transport} map $P_{\gamma}: E_{\gamma(0)} \to E_{\gamma(1)}$ that sends $\xi \in E_{\gamma(0)}$ to $\sigma(\gamma(1))$, where $\sigma$ is the unique solution to the differential equation that satisfies $\sigma(\gamma(0)) = \xi$. If $\gamma$ is a loop based at $u \in N$, that is, if $\gamma(0) = \gamma(1) = u$, then since $P_{\gamma}$ is invertible (its inverse is just $P_{\gamma^{-1}}$, where $\gamma^{-1}(t) := \gamma(1 - t)$), $P_{\gamma^{-1}} \in \GL(E_u)$. Let $\Omega_u$ denote the space of loops based at $u$.

\begin{definition}\label{definition:holonomy}
Let $E \to N$ be a vector bundle with general connection $\nabla^E$, and let $u$ be a point in $N$. The \textbf{holonomy} of $\nabla^E$ based at $x$ is the group
\[
    \Hol_u(\nabla^E) := \set{P_{\gamma} : \gamma \in \Omega_u(N)} \leq \GL(E_u) \textrm{.}
\]
\end{definition}

Picking a basis of $E_u$ identifies it with $\bbF^k$ and so realizes $\Hol_u(\nabla^E)$ as an explicit subgroup of $\GL(n, \bbF)$; any other choice of basis yields a conjugate subgroup, so without reference to bases we may regard $\Hol_u(\nabla^E)$ as a conjugacy class of subgroups of $\GL(n, \bbF)$. If $N$ is connected and $v$ is another point in $N$, let $\alpha$ be any path from $u$ to $v$; then by construction
\begin{equation}\label{equation:holonomy-conjugacy}
    \Hol_v(\nabla^E) = P_{\alpha} \Hol_u(\nabla^E) P_{{\alpha}^{-1}} \textrm{,}
\end{equation}
so that conjugacy class is also independent of the base point $u$. So, for connected manifolds, we may suppress reference to a base point and call that conjugacy class the holonomy $\Hol(\nabla^E)$ of $\nabla^E$. For each $x$, we may regard the fiber $E_u$ as a representation of $\Hol_u(\nabla^E)$, whence \eqref{equation:holonomy-conjugacy} shows that the isomorphism type of the representation does not depend on the choice of base point.

If $U \subset N$ is an open subset containing $u$, then all the loops in $U$ based at $u$ are loops in $N$ based there, and thus $\Hol_u(\nabla^E\vert_U) \leq \Hol_u(\nabla^E)$. So, we define the \textbf{local holonomy} of $\nabla^E$ based at $u$ to be the group
\[
    \Hol^*_u(\nabla^E) := \bigcap_{\beta} \Hol_u(\nabla^E\vert_{U_{\beta}})
\]
where $\beta$ indexes the set of open subsets $U_{\beta} \subseteq N$ that contain $u$. The local holonomy is a connected subgroup of $\GL_+(T_u E)$, and there is some open set $V$ containing $u$ such that $\Hol_u^*(\nabla^E) = \Hol_u(\nabla^E\vert_V)$; we denote its Lie algebra by $\mfhol^*_u(\nabla^E)$.

If a section $\sigma$ is a parallel, then $\sigma_u$ is fixed by $\Hol_u(\nabla^E)$; conversely, any $s \in E_u$ fixed by $\Hol_u(\nabla^E)$ defines by parallel transport a parallel section $\sigma$ such that $\sigma_u = s$. Thus, for a parallel section $\sigma$, $\Hol_u(\nabla^E) \leq \Stab_{\GL(E_u)} (\sigma_u)$. Analogous statements hold for sections of tensor bundles $\otimes^k E^*$ parallel with respect to the connections induced by $\nabla^E$.

The holonomy of a vector bundle connection $\nabla^E$ is closely related to its curvature $R^E$.

\begin{theorem}[Ambrose-Singer]\label{theorem:ambrose-singer}
Let $\nabla^E$ be a connection on a vector bundle $E \to M$, denote by $R^E$ its curvature viewed as a section of $\otimes^2 T^*M \to \End(E)$, and fix $u \in M$. The underlying vector space of the Lie algebra
\[
    \mfhol_u(\nabla^E) \leq \End(E_u)
\]
of the holonomy group $\Hol_u(\nabla^E)$ is spanned by the elements
\[
    P_{\gamma}^{-1} R^E_{\gamma(1)} (X, Y) P_{\gamma} \textrm{,}
\]
such that $\gamma: [0, 1] \to M$ is a smooth curve such that $\gamma(0) = u$, $P_{\gamma}$ is the parallel transport map $E_u \to E_{\gamma(1)}$ defined by the connection that $\nabla^E$ induces on $\End(E)$, and $X, Y \in T_{\gamma(1)} M$.
\end{theorem}

See, for example, \cite[Chapter 2, Theorem 8.1]{KobayashiNomizu1996} for the principal bundle version of this theorem.

One can recover partial information about the holonomy of $\nabla^E$ at $x$ to infinite order by differentiating curvature the curvature $R^E$, and it will turn out that this weaker version of holonomy will be all we need to prove the main result below.

\begin{definition}
Let $\nabla^E$ be a vector bundle connection on a vector bundle $E \to M$, and fix $u \in M$. The \textbf{infinitesimal holonomy (algebra)} of $\nabla^E$ is the Lie subalgebra $\mfhol'_u(\nabla^E) \leq \mfgl(E_u)$ generated by the endomorphisms
\[
    (R^E_u)_{ab \phantom{C} D, m_1 \cdots m_k}^{\phantom{ab} C} X^a Y^b Z_1^{m_1} \cdots Z_k^{m_k}
\]
for all $k \geq 0$ and $X, Y, Z_1, \ldots, Z_k \in T_u N$, where $C$ and $D$ are indices on the fiber $E_u$, and where the covariant derivatives are taken with appropriate connections on $\Lambda^2 T^* M \otimes E \otimes (\otimes^k T^* M)$ given by coupling the connection on $\End(E)$ induced by $\nabla^E$ with an arbitrary connection on $M$.
\end{definition}

By construction, $\mfhol'_u(\nabla^E) \leq \mfhol^*_u(\nabla^E) \leq \mfhol_u(\nabla^E)$ (see \cite[Chapter 2, Proposition 10.4]{KobayashiNomizu1996} for a proof of the first containment for principal connections). We then call the connected subgroup $\Hol'_u(\nabla^E) \leq \GL(E_u)$ with Lie algebra $\mfhol'_u(\nabla^E)$ the \textbf{infinitesimal holonomy (group)} of $\nabla^E$, and by construction, $\Hol'_u(\nabla^E) \leq \Hol_u(\nabla^E)$. (In fact, this becomes an equality when $N$ is simply connected and the underlying data is real-analytic.)

We will invoke the holonomy construction for two particular connections. Given a pseudo-Riemannian metric $h$ of signature $(p, q)$ on an $n$-manifold $N$, the holonomy of $h$, which we denote $\Hol_u(h)$ (or just $\Hol(h)$ if we only care about its conjugacy class) is just the holonomy $\Hol_u(\nabla^h)$ of its Levi-Civita connection, $\nabla^h$. Since $h$ itself is parallel, the induced action of $\Hol_u(h)$ must preserve $h_u$, that is, $\Hol_u(h) \leq \Ooperator(h_u)$; passing to conjugacy classes gives $\Hol(h) \leq \Ooperator(p, q)$.

A $k$-plane field $S$ is said to be parallel with respect to a connection $\nabla$ if for all sections $\xi \in \Gamma(S)$ and all vectors $\eta \in TN$ we have $\nabla_{\eta} \xi \in S$. By construction, if a $k$-plane field $S$ is parallel, then $\Hol_u(\nabla)$ fixes $S_u$; conversely, if $\Hol_u(\nabla)$ preserves a $k$-plane $s \subset T_u N$, by parallel transport it defines a parallel plane field $S$ such that $S_u = s$. In particular, if $S$ is parallel, then $S_u \leq T_u N$ is a subrepresentation of $\Hol_u(\nabla)$; as usual if $S$ is proper we say that $\Hol_u(h)$ acts reducibly. One can show that any parallel plane field is integrable, so $S$ defines a foliation of $N$ by $k$-manifolds. Since the holonomy preserves $h_u$ and $S_u$, the $(n - k)$-plane field $S^{\perp}$ is parallel too. Then, if $S$ is nondegenerate---that is, if $S \cap S^{\perp} = \set{0}$, which in particular is always the case if $h$ is definite---the $\Hol_u(h)$-representation $T_u N$ decomposes as $T_u N = S_u \oplus S^{\perp}_u$. In this case, one can show that there is an open set $U$ containing $u$ and integral manifolds $V$ and $W$ of $S$ and $S^{\perp}$ through $u$, respectively, such that $(U, h\vert_U)$ is locally isometric to $(V \times W, h_V \oplus h_W)$, where $h_V$ and $h_W$ are the respective pullbacks of $h$ to those leaves. Then, by construction, $\Hol(h\vert_U) = \Hol(h_V) \times \Hol(h_W)$, so to understand pseudo-Riemannian metric holonomy (locally, anyway) it is enough to understand indecomposably acting holonomy groups, that is, those for which the representation $T_u N$ of $\Hol_u(\nabla)$ does not decompose as a direct sum of proper subrepresentations, where $\nabla$ here denotes the Levi-Civita connection. There is no full classification of these groups, but there is a complete list of \textit{irreducibly} acting holonomy groups of metrics, at least on simply connected manifolds, and therefore of irreducibly acting metric local holonomy groups. (There are significant partial results, however, about which groups can occur in the remaining case, that is, the local metric holonomy groups that act indecomposably but not irreducibly.)

\begin{theorem}[Berger's List]\cite{Berger1955}
Let $(N, h)$ be a simply connected pseudo-Riemannian $n$-manifold of signature $(p, q)$ that is not locally a symmetric space (a pseudo-Riemannian manifold with parallel curvature tensor). If $\Hol(N, h)$ acts irreducibly, then up to isomorphism it is one of the following: $\SO(p, q)$, $\U(\frac{p}{2}, \frac{q}{2})$, $\SU(\frac{p}{2}, \frac{q}{2})$, $\Sp(\frac{p}{4}, \frac{q}{4})$, $\Sp(\frac{p}{4}, \frac{q}{4}) \cdot \Sp(1)$, $\SO(\frac{n}{2}, \bbC)$, $\G_2^c$ (only in signatures $(7, 0)$ and $(0, 7)$), $\G_2$ (signatures $(3, 4)$ and $(4, 3)$), $\G_2^{\bbC}$ (only in real dimension 14), $\Spin(7)$ (signatures $(8, 0)$ and $(0, 8)$), $\Spin(3, 4)$ (signature $(4, 4)$), and $\Spin(7, \bbC)$ (real dimension $16$).
\end{theorem}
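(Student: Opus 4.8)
The final statement is Berger's classification of irreducible, non-symmetric pseudo-Riemannian holonomy, so the plan is to convert the geometric hypothesis into an algebraic one and then run a representation-theoretic elimination. First I would apply the Ambrose-Singer theorem quoted above: the restricted holonomy algebra $\mfh := \mfhol_u(h) \subseteq \mfso(p, q)$ acting on $V := T_u N$ is spanned by the curvature endomorphisms $R(X, Y)$ together with their parallel transports. Since $h$ is parallel we have $\mfh \subseteq \mfso(V, h_u)$, and the hypothesis that $\Hol(N, h)$ acts irreducibly says precisely that $V$ is an irreducible $\mfh$-module. The whole problem is thereby reduced to classifying the irreducible subalgebras $\mfh \subseteq \mfso(V)$ that can actually occur as holonomy.

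The mechanism for cutting down the candidates is to encode the Bianchi identities as algebraic constraints on $\mfh$. I would introduce the space $\mathcal{K}(\mfh)$ of formal curvature tensors valued in $\mfh$, namely those $R \in \odot^2 \mfh$ (using the metric to identify $\mfh$ with a subspace of $\Lambda^2 V^*$) that satisfy the first Bianchi identity $R(X, Y)Z + R(Y, Z)X + R(Z, X)Y = 0$. The Ambrose-Singer description then forces \emph{Berger's first criterion}: $\mfh$ must be spanned by $\set{R(X, Y) : R \in \mathcal{K}(\mfh), \, X, Y \in V}$. For the non-symmetric case I would likewise introduce the space $\mathcal{K}^1(\mfh) \subseteq V^* \otimes \mathcal{K}(\mfh)$ of tensors obeying the second Bianchi identity; since $\nabla R \not\equiv 0$ for a space that is not locally symmetric, one obtains \emph{Berger's second criterion}, $\mathcal{K}^1(\mfh) \neq 0$. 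Any candidate failing either criterion is eliminated.

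Next I would reduce to a finite representation-theoretic list. One first checks that $\mfh$ is reductive, and then, via Schur's lemma, sorts the irreducible action into real, complex, or quaternionic type according to its commutant. Combining the highest-weight classification of irreducible representations with the constraint that $V$ carry an invariant nondegenerate symmetric form (so that $\mfh \subseteq \mfso(V)$) yields finitely many candidate pairs $(\mfh, V)$. For each such pair I would compute $\mathcal{K}(\mfh)$ by decomposing $\odot^2 \mfh$ into irreducible summands, isolating the Bianchi-compatible part, and then testing the two criteria.

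The hard part will be exactly this last, case-by-case step: computing $\mathcal{K}(\mfh)$ for each irreducible representation and verifying the spanning and non-vanishing conditions is long and delicate, and historically this is where the classification proved subtle---the symplectic family required later completion, and an erroneous entry ($\Spin(9)$, which in fact forces local symmetry) was only removed afterward. In the pseudo-Riemannian setting at hand the representation theory must moreover be carried out over the relevant non-compact real forms, which is what produces the complex-type entries $\SO(\tfrac{n}{2}, \bbC)$, $\G_2^{\bbC}$, and $\Spin(7, \bbC)$ alongside the split and definite real forms. Collecting the survivors of the elimination yields exactly the stated list.
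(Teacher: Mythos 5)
You should note first that the paper contains no proof of this statement at all: it is quoted verbatim as a classical result with the citation \cite{Berger1955} (and a following remark that the original list contained further entries later shown to occur only for symmetric spaces). So there is no in-paper argument to compare against; your proposal has to stand on its own as a proof of Berger's classification.

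As such, it has a genuine gap. What you have written is the historically correct \emph{strategy} --- Ambrose--Singer to reduce to an algebraic question, Berger's first criterion ($\mfh$ spanned by the images of formal curvature tensors in $\mathcal{K}(\mfh)$), Berger's second criterion ($\mathcal{K}^1(\mfh) \neq 0$ when the metric is not locally symmetric), then a representation-theoretic elimination --- but the entire mathematical content of the theorem is the case-by-case step you explicitly defer. For this particular classification that step is not a routine verification: it occupied Berger's original memoir, and you yourself note that it was carried out incorrectly at first (the spurious $\Spin(9)$ entry, the incomplete symplectic series); in the pseudo-Riemannian setting one must additionally enumerate and handle all real forms, which is precisely where entries such as $\SO(\frac{n}{2}, \bbC)$, $\G_2$ in signature $(3,4)$, $\Spin(3,4)$, and the complexified groups arise, and where irreducibility no longer forces the representation to be of compact type. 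Concluding with ``collecting the survivors of the elimination yields exactly the stated list'' therefore assumes exactly what is to be proved. Two smaller points: $\mathcal{K}(\mfh)$ is properly defined as the Bianchi-compatible subspace of $\Lambda^2 V^* \ot \mfh$, with the pair symmetry (hence membership in $\odot^2 \mfh$) a \emph{consequence} of the first Bianchi identity, not a definition; and the reduction to finitely many candidate pairs $(\mfh, V)$ requires a quantitative argument (bounds forced by the two criteria on the highest weights), which your outline asserts but does not supply.
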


(In fact, Berger's List originally included several other groups, but these were all later shown to occur only for symmetric spaces \cite{Bryant1987}.) By the above discussion, if the holonomy $\Hol(N, h)$ of a pseudo-Riemannian manifold acts indecomposably but not irreducibly, it must admit some proper degenerate $k$-plane field $S$, and thus a proper parallel totally null plane field, namely, $S \cap S^{\perp}$. In particular, locally all holonomy groups of metrics that are not locally symmetric and that act indecomposably but do not appear on Berger's list must admit a proper parallel totally null plane field.

Now, given a conformal structure $c$ of signature, say, $(p, q)$, the \textbf{conformal holonomy} or \textbf{tractor holonomy} of $c$ is the holonomy $\Hol(\nabla^{\mcT})$ of its tractor connection. Since $\nabla^{\mcT}$ is compatible with the (signature-$(p + 1, q + 1)$) tractor metric $g^{\mcT}$, $\Hol(\nabla^{\mcT}) \leq \Ooperator(p + 1, q + 1)$.

Given furthermore a choice of ambient metric $\wtg$ for $c$, we call $\Hol(\wtg)$ the \textbf{ambient holonomy} of $c$, though this choice in general depends on the choice of $\wtg$. It follows from the construction of the tractor connection from the ambient metric that $\Hol(\nabla^{\mcT}) \leq \Hol(\wtg)$ for all ambient metrics $\wtg$ of $c$ \cite{CapGover2003}.

We will use the following recent result of \v{C}ap, Gover, Graham, and Hammerl.

\begin{theorem}\cite{CapGoverGrahamHammerl2011}\label{theorem:CGGH}
Let $(M, c)$ be an conformal manifold of odd dimension at least $3$, fix $u \in M$, let $\wtg$ be an ambient metric for $c$, and let $z \in \mcG$ be an element in the fiber of $\mcG \to M$ over $u$. Then, $\mfhol'_u(\nabla^{\mcT}) \cong \mfhol'_z(\wtg)$.
\end{theorem}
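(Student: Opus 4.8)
The plan is to produce a single linear isometry between the fibres $(\mcT_u, g^{\mcT})$ and $(T_z\wtM, \wtg)$ and to check that, under it, the generators of the two infinitesimal holonomy algebras coincide; recall that $\mfhol'_u(\nabla^{\mcT})$ and $\mfhol'_z(\wtg)$ are by definition the Lie subalgebras of $\mfso(\mcT_u)$ and $\mfso(T_z\wtM)$ generated by the curvature and its iterated covariant derivatives at the marked point. The isometry is evaluation: by the construction of $\mcT$, a local section restricts over the fibre $\mcG_u$ to a vector field $\xi$ along the geodesic $\mcG_u$ with $T\delta_s\cdot\xi = s\xi$, and such a field is determined by its value $\xi(z)$, so $\mathrm{ev}_z\colon \mcT_u \to T_z\wtM$ is a linear isomorphism; since $g^{\mcT}$ is the restriction of $\wtg$, it is an isometry, and conjugation by it identifies $\mfso(\mcT_u)$ with $\mfso(T_z\wtM)$.

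Next I would match the generators. Because $\nabla^{\mcT}_\xi X = \wtnabla_{\overline{\xi}} X$ for a lift $\overline{\xi}$ tangent to $\mcG$, and because the tractor connection differentiates $T\wtM$-valued fields without projecting their values onto $T\mcG$ (so no second-fundamental-form term intervenes), a direct computation of $R^{\mcT}(\xi,\eta) = [\nabla^{\mcT}_\xi,\nabla^{\mcT}_\eta] - \nabla^{\mcT}_{[\xi,\eta]}$ identifies $R^{\mcT}(\xi,\eta)$ with $\wtR(\overline{\xi},\overline{\eta})\vert_z$ for $\xi,\eta\in T_uM$, and an induction on the order of differentiation identifies $(\nabla^{\mcT})^kR^{\mcT}$ at $u$ with the ambient derivative $\wtnabla^k\wtR$ at $z$ evaluated on arguments tangent to $\mcG$. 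Here straightness is what makes the identification well defined: it gives $\wtnabla\bbT = \id$, hence $\wtR(\bbT,\cdot)=\wtR(\cdot,\cdot)\bbT=0$, so the $[\bbT]$-ambiguity in the lifts is harmless, and the relation $\iota_{\bbT}\wtnabla_W\wtR = -\iota_W\wtR$ (got by differentiating $\iota_{\bbT}\wtR=0$ and using $\wtnabla_W\bbT=W$) propagates this to all orders. Since $\wtg\vert_{\mcG}$ is degenerate with kernel $\bbT$, one has $T_z\mcG = [\bbT]^{\perp}$, so $\mathrm{ev}_z$ carries $\mfhol'_u(\nabla^{\mcT})$ onto the subalgebra $\mfa\leq\mfso(T_z\wtM)$ generated by the values at $z$ of $\wtR,\wtnabla\wtR,\wtnabla^2\wtR,\dots$ with all form- and derivative-arguments taken in $[\bbT]^{\perp}$.

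It remains to prove $\mfa = \mfhol'_z(\wtg)$, and since $[\bbT]^{\perp}$ has codimension one in $T_z\wtM$, this means showing that the single transverse direction $\partial_\rho$ (which pairs nontrivially with $\bbT$) contributes no new generators. The $\bbT$-direction already lies in $[\bbT]^{\perp}$ and causes no trouble: homothety invariance $\delta_s^*\wtg=s^2\wtg$ together with $\wtnabla\bbT=\id$ forces $\wtnabla_{\bbT}(\wtnabla^k\wtR)$ to be a constant multiple of $\wtnabla^k\wtR$ plus terms with $\bbT$ contracted into a form-slot, which reduce by the identity above. The main obstacle is the $\partial_\rho$-direction, and this is exactly where Ricci-flatness of $\wtg$ is essential. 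I would invoke the Fefferman--Graham normal form, writing $\wtg$ through a one-parameter family $g_\rho$ of metrics on $M$ whose $\rho$-Taylor coefficients at $\mcG$ are, by the recursion forced by $\Ric(\wtg) = O(\rho^{\infty})$, universal tangential expressions in the jets of the conformal data. Consequently every normal derivative $\wtnabla_{\partial_\rho}^{\,j}\wtnabla^k\wtR$ at $z$ is expressible through tangential curvature derivatives and hence already lies in $\mfa$; carrying out this reduction---the computational heart of the argument and the only place the condition on $\Ric(\wtg)$ is used---yields $\mfa = \mfhol'_z(\wtg)$, and composing with $\mathrm{ev}_z$ gives $\mfhol'_u(\nabla^{\mcT}) \cong \mfhol'_z(\wtg)$.
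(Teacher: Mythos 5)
First, note that the paper itself offers no proof of this statement: it is imported, with citation, from \cite{CapGoverGrahamHammerl2011} and used as a black box in the proofs of Theorem A and Theorem \ref{theorem:F(q)-holonomy}, so the only meaningful benchmark is the proof in that reference.

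Your first two paragraphs are correct and match how that proof is set up. Evaluation at $z$ does identify $(\mcT_u, g^{\mcT}_u)$ isometrically with $(T_z \wtM, \wtg_z)$; the ambient construction of the tractor connection, $\nabla^{\mcT}_{\xi} X = \wtnabla_{\olxi} X$, identifies tractor curvature (and, by induction, its iterated derivatives, modulo lower-order terms that do not change the generated algebra) with ambient curvature derivatives on arguments tangent to $\mcG$; and the identities $\wtnabla \bbT = \id$, $\iota_{\bbT} \wtR = 0$, $\iota_{\bbT} \wtnabla_W \wtR = -\iota_W \wtR$, together with homogeneity, dispose of the $\bbT$-ambiguity in lifts and of $\bbT$-derivatives. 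This correctly reduces the theorem to the claim that the subalgebra $\mathfrak{a} \leq \mfso(T_z \wtM)$ generated by the values at $z$ of $\wtnabla^k \wtR$ on arguments in $T_z \mcG = [\bbT]^{\perp}$ is all of $\mfhol'_z(\wtg)$.

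The gap is that this last claim is not a reduction to be ``carried out''; it is the entire content of the theorem, and the inference you offer for it is a non sequitur. You argue that the Fefferman--Graham recursion makes the $\rho$-Taylor coefficients of $g_\rho$ universal tangential expressions in the jets of the conformal data, and that ``consequently'' every normal derivative of curvature lies in $\mathfrak{a}$. But universality (being a natural expression in the conformal data) is a statement of an entirely different kind from membership in the Lie subalgebra of $\mfso(T_z \wtM)$ generated by the pointwise values of the tangential derivatives; the latter is an algebraic condition at the single point $z$ that no naturality argument addresses. Concretely, at $\rho = 0$ the endomorphism $\wtR(\partial_\rho, X)$ has components given by Cotton- and Bach-type tensors of a representative metric---certainly universal expressions in the conformal data---but nothing in that observation places it inside the algebra generated by the tangential values $\wtR(Y, Z)\vert_z$, $\wtnabla_W \wtR(Y, Z)\vert_z, \dots$, whose components involve Weyl- and Cotton-type tensors. (Indeed, the \emph{entire} infinite-order jet of $\wtg$ along $\mcG$ is determined by the conformal data in odd dimensions; if ``determined by tangential data'' implied ``lies in $\mathfrak{a}$,'' the theorem would be a triviality.) In \cite{CapGoverGrahamHammerl2011}, precisely this closure statement---that $\mathfrak{a}$ absorbs all curvature derivatives involving $\partial_\rho$, whether as derivative or as form arguments---is the heart of the matter, and its proof is a long, delicate induction combining the second Bianchi identity, the homogeneity identities you noted, and the differentiated Fefferman--Graham equations; it establishes membership in $\mathfrak{a}$ generator by generator rather than by exhibiting normal derivatives as tangential ``expressions.'' As written, your argument proves only the identification $\mfhol'_u(\nabla^{\mcT}) \cong \mathfrak{a}$ and hence the containment $\mfhol'_u(\nabla^{\mcT}) \leq \mfhol'_z(\wtg)$, not the asserted equality.
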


There is an analogue of this theorem for even-dimensional manifolds, but the statement is more subtle and we do not need it here.

\section{Nurowski conformal structures}\label{section:Nurowski-conformal-structures}

In \cite{Nurowski2005}, Nurowski showed that to any generic $2$-plane field $D$ on a $5$-manifold $M$ one can associate a canonical conformal structure $c_D$ of signature $(2, 3)$ on $M$; we call any conformal structure that arises in this way a \textbf{Nurowski conformal structure}. One can read from Nurowski's original formulation that with respect to $c_D$, $D$ is totally null and $\smash{[D, D] = D^{\perp}}$.

Given a $2$-plane field $D_F$ defined by a smooth function $F$ via the quasi-normal form, \eqref{equation:local-coframe-partial}, we denote the conformal structure it induces via Nurowski's construction by $c_F := c_{D_F}$. Nurowski's remarkable formula for a representative $g_F$ of $c_F$ in the coframe $(\omega^a)$ is a sextic polynomial in the components of the $4$-jet of $F$; it contains more than 70 terms, however, so we do not reproduce it here.

One can realize Nurowski's conformal construction much more compactly at the cost of (substantial) abstraction, using the language of parabolic geometries. Cartan showed that a conformal structure of signature $(p, q)$ on a manifold $M$ can be realized as a principal $\dot{P}$-bundle over $M$ endowed with a $\mfo(p + 1, q + 1)$-valued Cartan connection satisfying a natural normalization condition (this connection is called the normal conformal connection), where $\dot{P}$ is the stabilizer of a null line in $\bbR^{p + q + 2}$ endowed with an inner product of signature $(p + 1, q + 1)$. The motivating feature of parabolic geometry (and more generally, Cartan geometry) is that it realizes many geometries in a common framework, allowing them to be treated in a unified way.

A generic $2$-plane field $D$ on a $5$-manifold $M$ can be realized as a principal $P_1$-bundle $E \to M$ endowed with a $\mfg_2$-valued Cartan connection $\omega$ satisfying some normalization criteria (recall that $P_1$ is the stabilizer in $\G_2$ of a null line in $\ImwtbbO$). Recall too that there is a natural embedding $\G_2 \hookrightarrow \SO(3, 4)$. By construction, $P_1 = \dot{P} \cap \G_2$, so we may extend any such $E$ by forming the principal $\dot{P}$-bundle $\dot{E} := E \times_{P_1} \dot{P}$; then, we can extend $\omega$ to a $\mfo(3, 4)$-valued $1$-form $\dot{\omega}$ on $\dot{E}$ by equivariance, and that equivariance guarantees that $\dot{\omega}$ is a Cartan connection. One can furthermore show that $\dot{\omega}$ satisfies Cartan's normalization condition, so it corresponds to a conformal structure $c_D$ of signature $(2, 3)$.

The parabolic perspective also yields an explicit method for recovering for a $2$-plane field $D$ the Cartan curvature tensor $A \in \Gamma(\odot^4 D^*)$ described in Subsection \ref{subsection:Cartan-curvature-tensor}: The conformal structure $c_D$ can be encoded in a $\CO(2, 3)$-principal frame bundle over $M$, and its Weyl curvature $W$ (which is the harmonic curvature of the parabolic geometry corresponding to conformal geometry in dimension $n \geq 4$) can be regarded as a section of the bundle associated to the representation of Weyl-type symmetries tensored with an appropriate conformal density bundle. The underlying $2$-plane field $D$ determines a reduction of the frame bundle to $\GL(2, \bbR)$ and hence induces natural decompositions of the corresponding $\CO(2, 3)$-representations into $\GL(2, \bbR)$-representations. In particular, the Weyl curvature decomposes into 15 pieces; all but five pieces turn out always to be zero, and the highest-weight summand is exactly Cartan's $A$ up to a nonzero constant factor.

One can use this Cartan connection approach to show that the conformal structure $c_{\Delta}$ that the flat model $(\mcQ, \Delta)$ induces on $\mcQ$ is just the conformally flat structure induced by the conformally flat structure on $\bbS^2 \times \bbS^3$ via the projection $\bbS^2 \times \bbS^3 \to \mcQ$.

\subsection{Characterization of Nurowski conformal structures}

Nurowski conformal structures $(M, c)$ are characterized in terms of objects on the base manifold \cite{HammerlSagerschnig2009}; we give a version (that is essentially given in that reference) of that characterization in terms of tractor data. A tractor $3$-form is a section of the bundle $\Lambda^3 \mcT^*$, and the tractor connection induces a connection, which we also denote $\nabla^{\mcT}$, on that bundle. Given any vector bundle $S \to M$ and any section $\varphi \in \Gamma(\Lambda^3 S^*)$ such that $\varphi_x$ is generic for all $u \in M$, let $H(\varphi) \in \Gamma(\odot^2 S^*)$ denote the bilinear form defined respectively on each fiber $S_u$ by $H(\varphi)_u := H(\varphi_u)$, where $H$ is the map \eqref{equation:induced-bilinear-form}.

\begin{theorem}\cite[Theorem A]{HammerlSagerschnig2009}\label{theorem:HS-Theorem-A}
A conformal structure $c$ on an oriented $5$-manifold $M$ is Nurowski (that is, there is a generic $2$-plane field $D$ on $M$ such that $c = c_D$) if and only if there is a parallel tractor $3$-form $\Phi \in \Gamma(\Lambda^3 \mcT^*)$ compatible with the tractor metric $g^{\mcT}$ in the sense that $g^{\mcT} = H(\Phi)$.
\end{theorem}

For any conformal manifold $(M, c)$, just as for the (standard) tractor bundle $\mcT$, there is a canonical projection $\Pi_0: \Gamma(\Lambda^3 \mcT^*) \to \Gamma(\Lambda^2 T^* M[3])$ and an operator $L_0: \Gamma(\Lambda^2 T^* M[3]) \to \Gamma(\Lambda^3 \mcT^*)$ so that $L_0(\Pi_0(\chi)) = \chi$ for every parallel tractor $3$-form $\chi \in \Gamma(\Lambda^3 \mcT^*)$. In particular such a $3$-form is determined by its image under $\Pi_0$, and the weighted $2$-forms produced this way are exactly the so-called normal conformal Killing $2$-forms of $c$ (see \cite{Leitner2005, HammerlSagerschnig2009}). Given a $2$-plane field $D$ on an oriented $5$-manifold $M$ and a corresponding parallel tractor $3$-form as in Theorem \ref{theorem:HS-Theorem-A}, the compatibility condition forces the $2$-form $\Pi_0(\Phi)$ to be nonvanishing and locally decomposable, so its kernel is a $3$-plane field on $M$, and this turns out to be $[D, D]$. Since $\smash{[D, D]^{\perp} = D}$ (where $\perp$ denotes the orthogonal with respect to the induced conformal structure $c_D$), $D$ is exactly the $2$-plane field spanned by the (locally decomposable, weighted) bivector field given by raising both indices of $\Pi_0(\Phi)$.

By naturality, every infinitesimal symmetry of a generic $2$-plane field $D$ on a $5$-manifold is also an infinitesimal symmetry of the induced conformal structure $c_D$, defining a natural inclusion $\mfaut(D) \hookrightarrow \mfaut(c_D)$. Hammerl and Sagerschnig showed one can realize the space of almost Einstein scales of $c_D$ as a natural complement to $\mfaut(D)$ in $\mfaut(c_D)$:

\begin{theorem}\cite[Theorem B]{HammerlSagerschnig2009}\label{theorem:HS-Theorem-B}
Let $D$ be a generic $2$-plane field on a $5$-manifold $M$, let $c_D$ denote the Nurowski conformal structure it induces, let $\Phi$ be a parallel $3$-form characterizing $D$ as in Theorem \ref{theorem:HS-Theorem-A}, and denote $\phi := \Pi_0(\Phi)$. There is a natural map $\mbaEs(c_D) \hookrightarrow \mfaut(c_D)$ defined by
\begin{equation}\label{equation:aEs-to-cKf}
    \sigma \mapsto \phi^{ab} \sigma_b + \tfrac{1}{4} \phi^{ba}_{\phantom{ba}, b} \sigma
\end{equation}
where covariant derivatives are taken with respect to the Levi-Civita connection of an arbitrary representative $g \in c_D$, and where $\sigma_b := \nabla_b \sigma$. If we identify $\mbaEs(c_D)$ with its image under this map, it is complementary to $\mfaut(D)$ in $\mfaut(c_D)$:
\[
    \mfaut(c_D) = \mfaut(D) \oplus \mbaEs(c_D) \textrm{.}
\]
The projection $\mfaut(c_D) \to \mbaEs(c_D)$ defined by this decomposition is given by
\[
    \xi^a \mapsto \phi_{ab} \xi^{b, a} - \tfrac{1}{2} \xi^a \phi_{ab,}^{\phantom{ab,} b} \textrm{.}
\]
\end{theorem}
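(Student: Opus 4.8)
The plan is to run the entire argument at the level of the standard tractor bundle $\mcT$ and then decode the resulting natural maps into the stated formulas on $M$. The organizing observation is that the parallel tractor $3$-form $\Phi$ furnished by Theorem \ref{theorem:HS-Theorem-A} reduces the structure group of $\mcT$ from $\SO(3,4)$ to $\G_2$, and hence splits the conformal adjoint tractor bundle $\Lambda^2 \mcT \cong \mfso(\mcT)$ fibrewise as a $\G_2$-module. Indeed, $\Lambda^2 \bbR^7 \cong \mfg_2 \oplus \bbR^7$, the second summand being the image of the $\G_2$-equivariant injection $X \mapsto X \imult \Phi$; applied pointwise this gives a decomposition $\Lambda^2 \mcT = \mathcal{A}_{\G_2} \oplus \Phi(\mcT)$, where $\mathcal{A}_{\G_2}$ is the subbundle of skew tractor endomorphisms annihilating $\Phi$ and $\Phi(\mcT) = \set{X \imult \Phi : X \in \mcT}$. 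Since $\Phi$ is parallel for $\nabla^{\mcT}$, this is a \emph{parallel} splitting, and $\mathcal{A}_{\G_2}$ is precisely the adjoint tractor bundle of the $\G_2$-parabolic geometry underlying $D$.

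First I would install the two correspondences identifying the summands. The inclusion $\mfaut(D) \hookrightarrow \mfaut(c_D)$ noted above is, at the tractor level, the statement that infinitesimal symmetries of the $\G_2$ geometry correspond to (normal) sections of $\mathcal{A}_{\G_2}$, by the standard theory of infinitesimal automorphisms of parabolic geometries. On the other side, $\Phi$ identifies $\Phi(\mcT)$ with $\mcT$, and by the Proposition preceding \eqref{equation:bijection-parallel-tractors-almost-Einstein-scales} the parallel standard tractors are exactly the almost Einstein scales $\mbaEs(c_D)$ via $\Pi_0$ and $L_0$. Thus a scale $\sigma$ yields the parallel tractor $L_0(\sigma)$, hence the parallel adjoint tractor $L_0(\sigma) \imult \Phi$, which (being parallel) is a normal conformal Killing field of $c_D$. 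The explicit map \eqref{equation:aEs-to-cKf} is then the decoding of $L_0(\sigma) \imult \Phi$: writing $L_0(\sigma)$ in a chosen scale $g \in c_D$ through the $2$-jet of $\sigma$, contracting against $\Phi$, and projecting to the $TM$-slot yields exactly $\phi^{ab}\sigma_b + \tfrac14 \phi^{ba}_{\phantom{ba}, b}\sigma$ with $\phi = \Pi_0(\Phi)$.

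With both injections in hand, the decomposition $\mfaut(c_D) = \mfaut(D) \oplus \mbaEs(c_D)$ is the decoded form of the parallel splitting $\Lambda^2 \mcT = \mathcal{A}_{\G_2} \oplus \Phi(\mcT)$: a conformal Killing field corresponds to a section of $\Lambda^2 \mcT$ whose $\mathcal{A}_{\G_2}$-component is a symmetry of $D$ and whose $\Phi(\mcT)$-component is $\Phi$ applied to a parallel standard tractor, i.e.\ to an almost Einstein scale; complementarity (trivial intersection, spanning) is inherited from the fibrewise $\G_2$-module splitting, which is direct. To obtain the stated projection I would decode the $\G_2$-equivariant projection $\Lambda^2 \mcT \to \Phi(\mcT) \cong \mcT$ — concretely, contracting a skew endomorphism against $\Phi$ and inverting $\Phi$ by means of the compatibility $g^{\mcT} = H(\Phi)$ — and then push the result down to $\mcD[1]$ through $\Pi_0$; applied to a conformal Killing field $\xi$ this produces $\phi_{ab}\xi^{b,a} - \tfrac12 \xi^a \phi_{ab,}^{\phantom{ab,} b}$. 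That this projection kills $\mfaut(D)$ and restricts to the identity on $\mbaEs(c_D)$ reduces to the algebraic identities for $\Phi$ encoding $\Lambda^2 \bbR^7 = \mfg_2 \oplus \bbR^7$, together with the defining overdetermined equations of the normal conformal Killing $2$-form $\phi$.

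The hard part will be the decoding carried out in the two preceding paragraphs: converting the clean $\G_2$-module projection on $\Lambda^2 \mcT$ into the explicit base formulas requires expressing the splitting operators $L_0$ on both $\mcD[1]$ and $\Lambda^2 T^*M[3]$ through the jets of their arguments in a fixed scale and then performing the contractions against $\Phi$ while correctly tracking conformal weights and the Levi-Civita correction terms. The representation theory — that the only $\mfg_2$-invariant complement to $\mfg_2$ in $\mfso(3,4)$ is the copy of $\bbR^7$ embedded by $X \mapsto X \imult \Phi$, with no further invariant pieces — is what forces exactly these two summands and makes the sum direct; the remaining obstacle is verifying that the decoded contraction identities (for instance the behaviour of $\phi^{ab}\phi_{ac}$ modulo $g^{\mcT}$) close up, which is precisely what the compatibility $g^{\mcT} = H(\Phi)$ guarantees.
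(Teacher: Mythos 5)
A point of orientation first: the paper you were given never proves this statement --- it is quoted, with a sign correction, from Theorem B of \cite{HammerlSagerschnig2009} --- so the only proof to measure your proposal against is the one in that reference. Your overall skeleton (the parallel tractor $3$-form reduces $\mcT$ to $\G_2$, splits $\Lambda^2\mcT$ into $\mathcal{A}_{\G_2}\oplus\set{X\imult\Phi : X\in\mcT}$, almost Einstein scales correspond to parallel standard tractors, and the explicit formulas come from decoding BGG splitting operators in a scale) is indeed the skeleton of Hammerl--Sagerschnig's argument, and your construction of the injection $\mbaEs(c_D)\hookrightarrow\mfaut(c_D)$ is sound: $L_0(\sigma)$ and $\Phi$ parallel imply that $L_0(\sigma)\imult\Phi$ is a parallel adjoint tractor, and projections of parallel tractors solve the corresponding first BGG equation, here the conformal Killing equation.

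The gap is in your third paragraph, and it is the heart of the theorem. A conformal Killing field does \emph{not} correspond to a parallel section of $\Lambda^2\mcT$: it corresponds (by \v{C}ap's theory of infinitesimal automorphisms of parabolic geometries, which is what ``the standard theory'' actually provides) to an adjoint tractor $s$ satisfying $\nabla^{\mcT}s + \kappa(\Pi(s),\,\cdot\,) = 0$, where $\kappa$ is the Cartan/tractor curvature --- and for the structures at issue $\kappa\neq 0$, since it encodes the Cartan quartic $A$. Consequently the decomposition of $\mfaut(c_D)$ is not ``inherited from the fibrewise $\G_2$-module splitting'': directness of the algebraic splitting gives uniqueness of the decomposition $s = s_0 + X\imult\Phi$, but not that the two components again satisfy the right differential equations. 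Splitting \v{C}ap's equation requires (i) that $\kappa$ takes values in $\mathcal{A}_{\G_2}$ --- fine, this follows from $\nabla^{\mcT}\Phi = 0$ --- which yields $\nabla^{\mcT}X = 0$ for the $\Phi(\mcT)$-component; but then the $\mathcal{A}_{\G_2}$-component satisfies $\nabla^{\mcT}s_0 + \kappa(\Pi(s_0),\,\cdot\,) = -\kappa(\eta,\,\cdot\,)$, where $\eta = \Pi(X\imult\Phi)$ is the Killing field attached to the almost Einstein scale. To conclude that $\Pi(s_0)\in\mfaut(D)$ --- that is, to obtain the claimed direct sum at all, and with it the statement that your projection formula kills $\mfaut(D)$ and is the identity on $\mbaEs(c_D)$ --- you must prove (ii) that $\kappa(\eta,\,\cdot\,) = 0$ for every such $\eta$: Killing fields arising from parallel adjoint tractors insert trivially into the curvature. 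This is a genuine lemma, resting on normality of the Cartan connection and Bianchi-type identities, and it is where Hammerl--Sagerschnig do real work; it is not a formal consequence of $\Lambda^2\bbR^7\cong\mfg_2\oplus\bbR^7$, and your proposal nowhere addresses it. By comparison, the ``hard part'' you flag (jets, weights, Christoffel bookkeeping in the splitting operators) is the routine part.
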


Equation \eqref{equation:aEs-to-cKf} corrects a sign error in the statement of the theorem in \cite{HammerlSagerschnig2009}.

\subsection{Holonomy groups of Nurowski conformal structures}

Let $D$ be a generic $2$-plane field on an oriented $5$-manifold. Since the tractor metric $g^{\mcT}$ of the Nurowski conformal structure $c_D$ is indefinite but is equal to $H(\Phi)$, the discussion before \eqref{equation:induced-bilinear-form} implies that $\Phi$ is split-generic and that $g^{\mcT}$ has signature $(3, 4)$; in particular, this recovers the fact that $c$ has signature $(2, 3)$. Subsection \ref{subsection:G2} shows that the stabilizer in $\GL(\bbV)$ of a split-generic $3$-form is $\G_2$, so the characterization of holonomy containment in terms of tensor stabilizers in Subsection \ref{subsection:holonomy} lets us reformulate Theorem \ref{theorem:HS-Theorem-A} in the language of holonomy.

\begin{theorem}\label{theorem:Nurowski-holonomy-characterization}
A conformal structure $c$ on a connected, oriented $5$-manifold $M$ is Nurowski if and only if for some (equivalently any) $x \in M$, $\Hol_x(\nabla^{\mcT}) \leq \G_2$ for some copy of $\G_2$ contained in $\SO(g^{\mcT}_x)$.
\end{theorem}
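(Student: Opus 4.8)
The plan is to deduce Theorem~\ref{theorem:Nurowski-holonomy-characterization} directly from Theorem~\ref{theorem:HS-Theorem-A} by translating its statement about a parallel tractor $3$-form into the language of holonomy, using the stabilizer characterizations assembled in Subsection~\ref{subsection:holonomy}. The essential bridge is the general fact, recorded there, that a section $\varphi \in \Gamma(\Lambda^3 \mcT^*)$ is parallel with respect to $\nabla^{\mcT}$ if and only if $\Hol_x(\nabla^{\mcT}) \leq \Stab_{\GL(\mcT_x)}(\varphi_x)$ for any (hence every) $x \in M$, combined with the algebraic fact from Subsection~\ref{subsection:G2} that the stabilizer in $\GL(\bbV)$ of a split-generic $3$-form is exactly $\G_2$. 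Since $M$ is connected, the holonomy conjugacy class is base-point independent by \eqref{equation:holonomy-conjugacy}, which is what lets me phrase the conclusion with the ``for some (equivalently any) $x$'' clause.

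First I would prove the forward direction. Assume $c$ is Nurowski. By Theorem~\ref{theorem:HS-Theorem-A} there is a parallel tractor $3$-form $\Phi \in \Gamma(\Lambda^3 \mcT^*)$ with $g^{\mcT} = H(\Phi)$. Since $g^{\mcT}$ is nondegenerate of signature $(3,4)$, the discussion preceding \eqref{equation:induced-bilinear-form} shows $\Phi_x$ is split-generic at each $x$. Parallelism of $\Phi$ forces $\Hol_x(\nabla^{\mcT})$ to fix $\Phi_x$, i.e. $\Hol_x(\nabla^{\mcT}) \leq \Stab_{\GL(\mcT_x)}(\Phi_x)$, and by the algebraic characterization this stabilizer is a copy of $\G_2$; moreover since $\nabla^{\mcT}$ preserves $g^{\mcT}$ we already know $\Hol_x(\nabla^{\mcT}) \leq \SO(g^{\mcT}_x)$, so this copy of $\G_2$ is the one embedded in $\SO(g^{\mcT}_x)$ via $H(\Phi_x) = g^{\mcT}_x$. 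This gives the desired containment.

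For the converse, suppose $\Hol_x(\nabla^{\mcT}) \leq \G_2$ for some copy of $\G_2 \hookrightarrow \SO(g^{\mcT}_x)$. This copy is the stabilizer of some split-generic $3$-form $\Phi_x \in \Lambda^3 \mcT_x^*$ with $H(\Phi_x) = g^{\mcT}_x$. Since $\Hol_x(\nabla^{\mcT})$ fixes $\Phi_x$, parallel transport extends $\Phi_x$ to a globally well-defined parallel tractor $3$-form $\Phi$ on the connected manifold $M$ (path-independence of the extension follows precisely from holonomy-invariance of $\Phi_x$). Because $H$ is $\GL$-equivariant and $\nabla^{\mcT}$ is metric, the identity $H(\Phi) = g^{\mcT}$ is preserved under parallel transport, so it holds everywhere. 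Theorem~\ref{theorem:HS-Theorem-A} then yields that $c$ is Nurowski.

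The main obstacle I anticipate is bookkeeping rather than mathematics: making the ``for some copy of $\G_2$'' quantifier precise and verifying that the copy arising from the holonomy containment is compatible with $g^{\mcT}_x$ in the sense required by $H(\Phi) = g^{\mcT}$. One must be careful that split-genericity of $\Phi_x$ is exactly equivalent to $H(\Phi_x)$ having signature $(3,4)$ (as stated after \eqref{equation:induced-bilinear-form}), so that the induced embedding $\G_2 \hookrightarrow \SO(3,4)$ matches the conformal tractor metric's signature; this is what guarantees the converse reconstruction lands in the Nurowski class and not in the compact-form case. Everything else is a routine application of the parallel-section/holonomy dictionary together with connectedness.
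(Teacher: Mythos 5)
Your proposal is correct and follows essentially the same route as the paper: the theorem is obtained by translating Theorem \ref{theorem:HS-Theorem-A} through the parallel-tensor/holonomy-stabilizer dictionary of Subsection \ref{subsection:holonomy}, together with the fact from Subsection \ref{subsection:G2} that $\G_2$ is precisely the stabilizer of a split-generic $3$-form, with the compatibility $H(\Phi) = g^{\mcT}$ accounting for the requirement that the copy of $\G_2$ lie in $\SO(g^{\mcT}_x)$. Your converse direction (invoking the holonomy principle to propagate $\Phi_x$ to a global parallel tractor $3$-form) and your care about the quantifier ``for some copy of $\G_2$'' simply make explicit the details the paper leaves to the reader.
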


Here, the containment is just the translation of the compatibility condition between the split-generic parallel tractor $3$-form and the tractor metric.

\begin{remark}
We can extend the statement of Theorem \ref{theorem:HS-Theorem-A} to nonoriented conformal structures on $5$-manifolds by replacing $M$ with its orientation cover. We can correspondingly extend the statement of Theorem \ref{theorem:Nurowski-holonomy-characterization} to such conformal structures by replacing $\G_2$ and $\SO(g^{\mcT}_x)$ with $\G_2 \cdot \bbZ_2$ and $\Ooperator(g^{\mcT}_x)$, respectively, where the nontrivial element of $\bbZ_2$ is just $-\id$ on the underlying vector space $\mcT_x$.
\end{remark}

If $D$ is real-analytic, so is the Nurowski conformal structure $c_D$ it induces, and thus we may associate to $D$ the canonical real-analytic metric $\wtg_D$ of $c_D$.

\begin{example}
For the $2$-plane fields $D_{F[\mba, b]}$ defined by the functions
\[
    F[\mba, b](x, y, p, q, z) := q^2 + a_6 p^6 + a_5 p^5 + a_4 p^4 + a_3 p^3 + a_2 p^2 + a_1 p + a_0 + b z \textrm{,}
\]
where $\mba := [a_0, \ldots, a_6]$ and $b$ are constants, Leistner and Nurowski computed the induced conformal structures and remarkably produced explicit corresponding Ricci-flat ambient metrics $\wtg_{F[\mba, b]}$ and compatible split-generic $3$-forms, which in particular shows that $\Hol(\wtg_{F[\mba, b]}) \leq \G_2$ for all $[\mba, b]$. Computing directly using the procedure given in \cite[\S5]{GrahamWillse2012G2} shows that if at least one of $a_3$, $a_4$, $a_5$, or $a_6$ is nonzero, then $D_{F[\mba, b]}$ has root type $[3, 1]$ everywhere except on at most three hyperplanes of the form $\set{p = p_0}$, where it has root type $[4]$ or $[\infty]$. Leistner and Nurowski showed for any such $[\mba, b]$ that $\Hol(\wtg_{F[\mba, b]}) = \G_2$ by eliminating the possibility of proper containment using ad hoc methods and explicit tensorial data they computed for this class.

(For completeness, if $a_3 = a_4 = a_5 = a_6 = 0$ but $a_2 \neq -\frac{2}{9} b^2, 2 b^2$, then $D_{F[\mba, b]}$ has constant root type $[4]$ and $\dim \mfaut(D_{F[\mba, b]}) = 7$, whence it is locally equivalent to $D_I$ for some constant $I$; see Subsection \ref{subsection:Cartan-class} below. If $a_3 = a_4 = a_5 = a_6 = 0$ and either $a_2 = -\frac{2}{9} b^2$ or $a_2 = 2 b^2$, then $D_{F[\mba, b]}$ has constant root type $[\infty]$, that is, $D_{F[\mba, b]}$ is locally flat.)
\end{example}

Graham and Willse used Theorem \ref{theorem:HS-Theorem-A} to show for all oriented, real-analytic $D$ that $\Hol(\wtg_D) \leq \G_2$, and extended Leistner and Nurowski's arguments to show that, in a suitable sense, equality holds for nearly all $D$ \cite{GrahamWillse2012G2}.

In the following we are interested in the exceptions thereto: In Section \ref{section:special-classes}, we will give a broad class of $2$-plane fields $D$ for which the conformal holonomy and the holonomy of suitable ambient metrics of the conformal structure $c_D$ are proper subgroups of $\G_2$.

To identify these groups explicitly, we will use the following proposition, which is just a translation of Proposition \ref{proposition:stabilizer-two-null-vectors} into the setting of holonomy using the stabilizer characterization of holonomy in Section \ref{subsection:holonomy}.

\begin{proposition}\label{proposition:holonomy-containment}
Let $(N, h)$ be a connected pseudo-Riemannian $7$-manifold that admits a parallel split-generic $3$-form $\Psi$ that satisfies $H(\Psi) = h$ and two linearly independent, parallel, null vector fields $\xi$ and $\eta$. Then, the holonomy of $h$ satisfies
\[
    \left\{
    \begin{array}{ll}
        \Hol(\nabla^h) \leq \Hoperator_5, & \textrm{if                           $\Psi(\xi, \eta, \, \cdot \,) =    0$;} \\
        \Hol(\nabla^h) \leq       \bbR^3, & \textrm{if $h(\xi, \eta)    = 0$ and $\Psi(\xi, \eta, \, \cdot \,) \neq 0$;} \\
        \Hol(\nabla^h) \leq \SL(2, \bbR), & \textrm{if $h(\xi, \eta) \neq 0$.}
    \end{array}
    \right.
\]
\end{proposition}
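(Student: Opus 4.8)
The plan is to reduce the statement to the purely algebraic Proposition~\ref{proposition:stabilizer-two-null-vectors} via the stabilizer characterization of holonomy recorded in Subsection~\ref{subsection:holonomy}. Fix a base point $u \in N$ and identify $T_u N$ with $\bbV$ in such a way that $\Psi_u$ becomes the reference split-generic $3$-form $\Phi$; since $H(\Psi) = h$, this identification carries $h_u$ to $H(\Phi) = \ab{\cdot,\cdot}$, so $\SO(h_u) \cong \SO(3,4)$ and the copy of $\G_2$ stabilizing $\Psi_u$ is the standard one. Because $\Psi$, $\xi$, $\eta$ are all parallel, the holonomy group $\Hol_u(\nabla^h)$ fixes each of $\Psi_u$, $\xi_u$, $\eta_u$; hence
\[
    \Hol_u(\nabla^h) \leq \Stab_{\SO(3,4)}(\Psi_u) \cap \Stab_{\SO(3,4)}(\xi_u) \cap \Stab_{\SO(3,4)}(\eta_u) \textrm{.}
\]
Since $h$ is parallel the holonomy already lies in $\Ooperator(h_u)$, and one should note it lands in the identity component $\SO(h_u)$ because holonomy of a connection is connected at the Lie-algebra level only locally; more carefully, $\Hol_u^*(\nabla^h)$ is connected and the full $\Hol_u(\nabla^h)$ preserves any orientation/volume data coming from $\Psi_u$, so containment in $\SO(3,4)$ holds. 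The right-hand side is exactly the common stabilizer computed in Proposition~\ref{proposition:stabilizer-two-null-vectors} with $x = \xi_u$, $y = \eta_u$.

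Next I would check that the three cases in the present statement correspond to three of the four cases in Proposition~\ref{proposition:stabilizer-two-null-vectors}. Because $\xi$ and $\eta$ are linearly independent, $[\xi_u] \neq [\eta_u]$, so the first case ($[x]=[y]$, stabilizer $K$) is excluded. The remaining three conditions match line by line: $\Psi(\xi,\eta,\cdot) = 0$ gives $\Hoperator_5$; $h(\xi,\eta) = 0$ with $\Psi(\xi,\eta,\cdot) \neq 0$ gives $\bbR^3$; and $h(\xi,\eta) \neq 0$ gives $\SL(2,\bbR)$. Evaluating each of these conditions at $u$ is exactly evaluating the corresponding algebraic condition on $\xi_u, \eta_u \in \mcN$ (the vectors are null since $\xi,\eta$ are null vector fields), so the hypotheses transfer verbatim. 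This yields $\Hol_u(\nabla^h) \leq \Hoperator_5$, $\bbR^3$, or $\SL(2,\bbR)$ respectively, and since $N$ is connected the conjugacy class $\Hol(\nabla^h)$ is well defined and independent of $u$, giving the stated containments.

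The only genuine subtlety is verifying that the pointwise conditions on $\xi,\eta$ are constant along $N$, i.e.\ that the trichotomy is well defined for the whole manifold rather than just at $u$: a priori $h(\xi,\eta)$, $\Psi(\xi,\eta,\cdot)$, and $[\xi]$ versus $[\eta]$ could vary from point to point. This is where I would use that all three tensors are \emph{parallel}: the scalar $h(\xi,\eta)$ is parallel hence (on connected $N$) constant, the $1$-form $\Psi(\xi,\eta,\cdot)$ is parallel so its vanishing is an all-or-nothing condition, and linear independence of two parallel vector fields likewise holds everywhere once it holds at one point. Thus each of the three defining conditions is either satisfied everywhere or nowhere, so the hypotheses of the proposition are themselves consistent across $N$ and the classification at $u$ propagates globally. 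Because this reduction is essentially formal given Proposition~\ref{proposition:stabilizer-two-null-vectors}, I do not expect a substantive obstacle; the main care is simply in the bookkeeping that ties the parallel transport of $\Psi$, $\xi$, $\eta$ to the algebraic stabilizer computation.
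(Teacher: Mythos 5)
Your proposal is correct and matches the paper's approach exactly: the paper presents this proposition without a separate proof, describing it as ``just a translation of Proposition \ref{proposition:stabilizer-two-null-vectors} into the setting of holonomy using the stabilizer characterization of holonomy,'' which is precisely your reduction (parallel tensors are fixed by holonomy, so $\Hol_u(\nabla^h)$ lies in the common stabilizer of $\Psi_u$, $\xi_u$, $\eta_u$, and linear independence rules out the case $[\xi_u]=[\eta_u]$). Your additional care in checking that the trichotomy conditions are constant on connected $N$ because the tensors are parallel is a sound piece of bookkeeping that the paper leaves implicit.
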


(See \cite[Corollary 2.5]{Kath1998} for a related result about common stabilizers of nonisotropic spinors and related results in 2.4 there.)

We can give an analogous result for tractor holonomy $\Hol(\nabla^{\mcT})$ by replacing $(N, h)$ with the tractor bundle $\mcT$ of a conformal structure and the tractor metric $g^{\mcT}$, $\Psi$ with a section of $\Lambda^3 \mcT^*$, and $\xi$ and $\eta$ with tractors. Though we do not need this result later, we record here a tractor holonomy version of part of this proposition in terms of base data as an application of the BGG splitting operators.

\begin{proposition}
Let $D$ be a generic $2$-plane field on a $5$-manifold $M$, and let $\phi \in \Gamma(\Lambda^2 T^* M [3])$ be the corresponding normal conformal Killing $2$-form. If $c$ admits two almost Ricci-flat scales $\sigma, \tau \in \Gamma(\mcD[1])$ such that neither is a constant multiple of the other (or equivalently, nonzero scales that correspond to non-homothetic metrics $\sigma^{-2} \mbg$ and $\tau^{-2} \mbg$) and such that
\[
    \phi_{ab} \sigma^a \tau^b - \tfrac{1}{4} \phi_{ab,}^{\phantom{ab,} a} (\sigma \tau^a - \tau \sigma^a) = 0 \textrm{,}
\]
then $\Hol(\nabla^{\mcT}) \leq \Hoperator_5$.
\end{proposition}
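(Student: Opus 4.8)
The plan is to lift the two scales to parallel null tractors, recognize the displayed base identity as exactly the vanishing of a certain parallel tractor built from these together with the split-generic $3$-form, and then quote the tractor analogue of Proposition~\ref{proposition:holonomy-containment}. First I would apply the isomorphism \eqref{equation:bijection-parallel-tractors-almost-Einstein-scales} to produce the parallel tractors $\xi := L_0(\sigma)$ and $\eta := L_0(\tau)$. Since $L_0$ is linear and satisfies $\Pi_0 \circ L_0 = \id$ it is injective, so the assumption that neither scale is a constant multiple of the other makes $\xi$ and $\eta$ linearly independent; as both are parallel, a single proportionality $\xi_x \parallel \eta_x$ would force proportionality everywhere with constant factor, so in fact $[\xi_x] \neq [\eta_x]$ at every $x$. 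Because $\sigma$ and $\tau$ are almost Ricci-flat, the relation $\ab{L_0(\sigma), L_0(\sigma)} = -2\lambda$ recorded after \eqref{equation:bijection-parallel-tractors-almost-Einstein-scales} shows $\xi$ and $\eta$ are null. Finally, since $c$ is Nurowski, Theorem~\ref{theorem:HS-Theorem-A} furnishes a parallel split-generic tractor $3$-form $\Phi$ with $H(\Phi) = g^{\mcT}$, so $(\mcT, g^{\mcT})$ carries precisely the data required by the tractor version of Proposition~\ref{proposition:holonomy-containment}.

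The core of the argument is to identify the stated base condition with the tractor condition $\Phi(\xi, \eta, \,\cdot\,) = 0$. The tractor $1$-form $\Phi(\xi, \eta, \,\cdot\,)$ is assembled from parallel objects and is therefore parallel; identifying $\mcT^* \cong \mcT$ via $g^{\mcT}$, the remark that any parallel tractor is determined by its canonical projection shows that this parallel tractor vanishes if and only if its canonical projection does. Thus it suffices to compute its pairing with the canonical tractor $\bbT$, namely $\Phi_{ABC}\,\xi^A\,\eta^B\,\bbT^C$. Working in a scale $g \in c$ and writing $\xi = \sigma Y + (\nabla^a\sigma) Z_a + \rho_\sigma \bbT$ and $\eta = \tau Y + (\nabla^a\tau) Z_a + \rho_\tau \bbT$ in the standard tractor frame (with $\bbT = X$, $g^{\mcT}(X, Y) = 1$ and $g^{\mcT}(Z_a, Z_b) = g_{ab}$), the two terms carrying the bottom slots $\rho_\sigma, \rho_\tau$ each contract an $X$ against the final $\bbT = X$ and so vanish by antisymmetry of $\Phi$, as does the $YYX$ term. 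What survives is
\[
    \Phi_{ABC}\,\xi^A\,\eta^B\,\bbT^C = \phi_{ab}\,\nabla^a\sigma\,\nabla^b\tau + \Phi(Y, Z_b, X)\,(\sigma\,\nabla^b\tau - \tau\,\nabla^b\sigma) \textrm{,}
\]
where $\phi_{ab} = \Phi(Z_a, Z_b, X)$ is the top slot, i.e.\ $\phi = \Pi_0(\Phi)$.

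It then remains to evaluate the middle slot $\Phi(Y, Z_b, X)$. The standard splitting of the tractor $3$-form $\Phi$ relative to its projection $\phi$ expresses this component as the divergence $-\tfrac14\nabla^a\phi_{ab}$; substituting this identifies $\Phi_{ABC}\,\xi^A\,\eta^B\,\bbT^C$ with the left-hand side of the displayed base identity, so the hypothesis is equivalent to $\Phi(\xi, \eta, \,\cdot\,) = 0$. A useful consistency check is that this expression is, up to sign, the antisymmetrization in $\sigma$ and $\tau$ of the pairing of $\nabla\tau$ with the conformal Killing field \eqref{equation:aEs-to-cKf} that Theorem~\ref{theorem:HS-Theorem-B} attaches to $\sigma$; the matching coefficient $\tfrac14$ and divergence term there corroborate the value of the middle slot. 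With the equivalence established, $\xi$ and $\eta$ are linearly independent parallel null tractors with $\Phi(\xi,\eta,\,\cdot\,) = 0$, so the first case of the tractor analogue of Proposition~\ref{proposition:holonomy-containment} gives $\Hol(\nabla^{\mcT}) \leq \Hoperator_5$.

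I expect the slot evaluation $\Phi(Y, Z_b, X) = -\tfrac14\nabla^a\phi_{ab}$ to be the main obstacle: it requires fixing conventions for the weighted splitting of a tractor $3$-form and verifying that no further curvature corrections (from the bottom slots of $\xi, \eta$ or from lower slots of $\Phi$) survive the contraction with $\bbT$. Once the correct normalization of this slot is pinned down, the remaining bookkeeping is routine.
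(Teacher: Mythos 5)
Your proposal is correct and follows exactly the route the paper indicates for this proposition (which it records without a detailed proof, as "an application of the BGG splitting operators"): lift $\sigma, \tau$ to linearly independent parallel null tractors $\xi = L_0(\sigma)$, $\eta = L_0(\tau)$, identify the displayed base identity with the vanishing of the canonical projection $\Phi(\xi, \eta, \bbT)$ of the parallel tractor $\Phi(\xi, \eta, \,\cdot\,)$ --- hence, by $L_0 \circ \Pi_0 = \id$ on parallel sections, with $\Phi(\xi, \eta, \,\cdot\,) = 0$ itself --- and invoke the tractor analogue of Proposition \ref{proposition:holonomy-containment}. The middle-slot normalization $\Phi(Y, Z_b, X) = -\tfrac{1}{4}\nabla^a\phi_{ab}$ that you flag as the remaining convention issue is precisely the one implicit in \eqref{equation:aEs-to-cKf}, so your consistency check against Theorem \ref{theorem:HS-Theorem-B} settles it.
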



\section{Some special $2$-plane fields on $5$-manifolds}\label{section:special-classes}

\subsection{Cartan's class of highly symmetric $2$-plane fields with root type $[4]$}\label{subsection:Cartan-class}

In \cite{Cartan1910}, Cartan solved the local equivalence problem for generic $2$-plane fields on $5$-manifolds. He restricted attention to $2$-plane fields with constant root type and first classified $2$-plane fields according to those types. To avoid a proliferation of cases, he (largely) further restricted attention to $2$-plane fields whose symmetry algebra has dimension at least $6$. We summarize some of his results; for a relatively accessible but detailed exposition of Cartan's arguments, see \cite[\S17]{Stormark2000}.

\begin{theorem}\cite{Cartan1910} \label{theorem:Cartan-root-type-4}
Let $D$ be a generic $2$-plane field on a $5$-manifold $M$ with constant root type, and suppose that $\dim \mfaut(D) \geq 6$. Then, one of the following holds:
\begin{itemize}
    \item $D$ has constant root type $[\infty]$ and so is locally flat, and thus $\mfaut(D) \cong \mfg_2$
    \item $D$ has constant root type $[4]$ and $\dim \mfaut(D)$ is either $6$ or $7$
    \item $D$ has constant root type $[2, 2]$ and $\dim \mfaut(D) = 6$.
\end{itemize}
\end{theorem}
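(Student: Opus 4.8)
The plan is to recover Cartan's classification via the equivalence method, organized around the Cartan tensor $A$ as the single primary invariant. Write $\mfg$ for the split real form $\mfg_2$ of type $\G_2$, with the $|3|$-grading $\mfg = \mfg_{-3} \oplus \cdots \oplus \mfg_3$ induced by $P_1$; thus $\dim \mfg = 14$, $\mfg_0 \cong \mfgl(2, \bbR)$, the negative part $\mfg_- := \mfg_{-3} \oplus \mfg_{-2} \oplus \mfg_{-1}$ (of graded ranks $2, 1, 2$) has dimension $5$, and $\mfp := \mfg_0 \oplus \cdots \oplus \mfg_3$. First I would attach to $D$ its canonical Cartan geometry of type $(\G_2, P_1)$ --- a bundle $\mcG \to M$ with normal Cartan connection valued in $\mfg$ --- either by Cartan's explicit reduction \cite{Cartan1910} or by the general parabolic machinery \cite{CapSlovak2009}. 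By Kostant's theorem the harmonic curvature of this geometry is a section of the single irreducible $\mfg_0$-module of binary quartics $\odot^4 D^*$, which is exactly $A$. Since infinitesimal symmetries of $D$ correspond bijectively to infinitesimal automorphisms of the Cartan geometry, one gets $\dim \mfaut(D) \le \dim \mfg = 14$, with equality precisely when the connection is flat, i.e.\ $A \equiv 0$; this is root type $[\infty]$ and yields the first alternative.

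For $A \not\equiv 0$ I would reduce to a pointwise problem in $\mfg$. Constant root type means the $\mfg_0$-orbit of $A_u$ in $\odot^4 D_u^*$ is independent of $u$; fixing a point and a normalized orbit representative $\phi$, Tanaka prolongation bounds the symmetry dimension by
\[
    \dim \mfaut(D) \le \dim \mfg_- + \dim \mathfrak{a}(\phi) = 5 + \dim \mathfrak{a}(\phi),
\]
where $\mathfrak{a}(\phi) \subseteq \mfp$ is the prolongation of the annihilator $\mathrm{ann}_{\mfg_0}(\phi)$ cut out by the requirement that it preserve the full jet of the curvature, so that the Bianchi identities and the normalization condition, which express derivatives of $A$ in terms of $A$, enter as extra linear constraints. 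The task then becomes a finite-dimensional computation: for each root type, determine $\mathrm{ann}_{\mfg_0}(\phi)$ and its constrained prolongation, maximizing over the orbit.

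The degree-zero annihilators are elementary binary-form stabilizers: $\dim \mathrm{ann}_{\mfg_0}(\phi)$ is $2$ for $[4]$ (a Borel-type subalgebra of $\mfgl(2, \bbR)$ fixing the quadruple-root line), $1$ for each of $[3, 1]$ and $[2, 2]$ (a line in the Cartan subalgebra), and $0$ for $[2, 1, 1]$ and $[1, 1, 1, 1]$. For the last two, nondegeneracy of the bracket $\mfg_{-1} \times \mfg_1 \to \mfg_0$ forces the positive prolongation to vanish as well, so $\dim \mfaut(D) \le \dim \mfg_- = 5$ and these types are excluded. The genuinely delicate point is to separate $[3, 1]$ from $[2, 2]$, which carry equal degree-zero annihilators: here the constrained prolongation --- annihilation not merely of $A$ but of its forced covariant derivatives --- must be exploited. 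For $[3, 1]$ the components of $\nabla A$ that the structure equations force to be nonzero are moved nontrivially by the candidate isotropy, collapsing even the degree-zero part and giving $\dim \mfaut(D) \le 5$; for $[2, 2]$ the isotropy survives without further prolonging, yielding rank exactly $6$. For $[4]$ the two-dimensional annihilator prolongs to the submaximal value $7$, and a single further reduction inside that branch accounts for rank $6$ as well. Together with the gap $\dim \mfaut(D) \le 7$ for $A \not\equiv 0$, this isolates $[4]$ (rank $6$ or $7$) and $[2, 2]$ (rank $6$) as the only finite types with rank $\ge 6$, giving the remaining two alternatives. That the listed ranks are actually attained I would confirm by exhibiting normal forms --- for $[4]$, the coframe \eqref{equation:structure-equations} of the present paper.

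The step I expect to be the real obstacle is precisely this constrained prolongation, and specifically the differential bookkeeping that distinguishes $[3, 1]$ from $[2, 2]$ (and rank $6$ from rank $7$ inside $[4]$): one must determine exactly which first and higher covariant derivatives of $A$ are forced to be nonzero once the root type is normalized, since it is these --- not the pointwise $\mfg_0$-stabilizer, which fails to separate the two $1$-dimensional cases --- that cut the naive prolongation down to the sharp bound. Cartan's own route absorbs this into an explicit branch-by-branch reduction and integration of the structure equations \cite{Cartan1910} (see \cite{Stormark2000} for a detailed exposition), which is elementary but long; the prolongation formulation trades that length for a more structured, though still technical, algebraic computation.
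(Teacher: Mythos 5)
First, a point of comparison: the paper does not prove this theorem at all --- it is imported wholesale from Cartan \cite{Cartan1910}, with \cite[\S17]{Stormark2000} cited as an accessible exposition --- so your proposal must be measured against Cartan's own argument, which proceeds by the equivalence method: an explicit, branch-by-branch reduction and integration of structure equations, one root type at a time. Your route is genuinely different: it is the modern parabolic-geometry approach (canonical Cartan geometry of type $(\G_2, P_1)$, Kostant's identification of the harmonic curvature with the quartic $A$, Tanaka-prolongation bounds on the symmetry dimension), essentially the framework later used by Kruglikov and The to settle the symmetry gap problem. The parts you actually carry out are correct: the flat case; the degree-zero annihilator dimensions $2, 1, 1, 0, 0$ for the types $[4]$, $[3,1]$, $[2,2]$, $[2,1,1]$, $[1,1,1,1]$; and the exclusion of the last two types, since a trivial degree-zero annihilator has trivial prolongation.

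The gap is that everything separating the theorem's conclusion from a strictly weaker one is asserted rather than proved. Your pointwise bound gives $\rank \mfaut(D) \le 6$ for both $[3,1]$ and $[2,2]$, whereas the theorem requires $[3,1]$ to be excluded entirely at rank $\ge 6$; your mechanism for this --- that the isotropy moves the components of the covariant derivative of $A$ forced to be nonzero by the structure equations, collapsing the bound to $5$ --- is stated as an expectation, with no identification of which components are forced or how the candidate isotropy acts on them. The same holds for the claim that the prolongation of the $[4]$-annihilator stops at total dimension $7$ rather than higher. Finally, the ``attained'' half of the statement is incomplete: realizing rank $6$ for type $[2,2]$ requires exhibiting a normal form or example (Cartan gives one), and you supply a normal form only for $[4]$. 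As written, your argument establishes only that constant root type together with $\rank \mfaut(D) \ge 6$ forces type $[\infty]$, $[4]$, $[3,1]$, or $[2,2]$, with respective bounds $14$, $7$, $6$, $6$ --- strictly weaker than the theorem. The plan is viable, and you have correctly located the hard core (the constrained prolongation), but that core is left unexecuted, which for this particular theorem is where all the content lies.
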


Cartan produced what he claimed was a local normal form for $D$ with constant root type $[4]$ and for which $\dim \mfaut(D) \geq 6$ \cite[\S9]{Cartan1910}: He claimed that for any point in $M$ one can find a neighborhood $U$ of that point and construct a principal bundle $E \to U$ and an adapted coframe $(\eta^1, \eta^2, \eta^3, \eta^4, \eta^5, \pi^1, \pi^2)$ on $E$ with structure equations
\begin{align}\label{equation:structure-equations}
    d\eta^1 &= 2 \eta^1 \wedge  \pi^1 + \eta^2 \wedge \pi^2 + \eta^3 \wedge \eta^4 \notag \\
    d\eta^2 &=   \eta^2 \wedge  \pi^1                       + \eta^3 \wedge \eta^5 \notag \\
    d\eta^3 &= I \eta^2 \wedge \eta^5 + \eta^3 \wedge \pi^1 + \eta^4 \wedge \eta^5 \notag \\
    d\eta^4 &= \tfrac{4}{3} I \eta^3 \wedge \eta^5 + \eta^4 \wedge \pi^1 + \eta^5 \wedge \pi^2 \\
    d\eta^5 &= 0 \notag \\
    d   \pi^1 &= 0 \notag \\
    d   \pi^2 &= - \pi^1 \wedge \pi^2 - I \eta^4 \wedge \eta^5 + \eta^2 \wedge \eta^5 \textrm{,} \notag
\end{align}
where $I$ is a smooth function on $U$; by construction, $D$ is the common kernel of the pullbacks of $\eta^1$, $\eta^2$, and $\eta^3$ to $M$ by any local section of $E \to U$. Pulling back the forms $\pi^1$ and $\pi^2$ shows that the Lie algebra of the structure group of $E$ is the unique nonabelian $2$-dimensional Lie algebra. Furthermore, Cartan constructed the frame so that the Cartan curvature of $D$ is just $A = (\eta^5)^4$. Differentiating both sides of the equations for $d\eta^3$ and $d\pi^2$ shows that $dI = J \eta^5$ for some function $J$. The invariant $I$ is fundamental in the sense that all other invariants, in particular $J$, are functions of $I$. In fact, Doubrov and Govorov have recently showed that Cartan's classification neglects at least one example: There is a generic $2$-plane field on a $5$-manifold of constant root type $[4]$ and with $6$-dimensional symmetry algebra but which cannot be realized locally in the above form for any function $I$; see Example \ref{example:doubrov-govorov} for further discussion of this counterexample.

Since $d\eta^5 = 0$, locally we may take $\eta^5 = dx$ for a coordinate $x$; substituting gives $dI = J \,dx$, so $I$ is a function of $x$ alone. One can then satisfy the pullback of the system \eqref{equation:structure-equations} to $M$ by an arbitrary section by setting
\begin{align*}
    \eta^1 &=  dz + \tfrac{7}{3} p I \,dy + q \,dp - [\tfrac{1}{2} q^2 + \tfrac{2}{3} I p^2 - \tfrac{1}{2} (1 + I^2 - I'')] dx \notag \\
    \eta^2 &=  dy - p \,dx \notag \\
    \eta^3 &= -dp + q \,dx \notag \\
    \eta^4 &=  dq - I \,dx \\
    \eta^5 &=  dx \notag \\
     \pi^1 &=  0  \notag \\
     \pi^2 &= -I' dy - \tfrac{4}{3} I \,dp + [(1 + I^2 - I'') y - \tfrac{4}{3} I' p - I q] dx \textrm{,}
\end{align*}
where we have suppressed pullback notation and suggestively used the variables that occur in the Monge normal form equation \eqref{equation:ODE}.

The general solution to the system defining the $2$-plane field $D$ (which again is the common kernel of the pullbacks of $\eta^1$, $\eta^2$, and $\eta^3$ to $M$) is
\begin{align*}
    y &= f  (x) \\
    p &= f' (x) \\
    q &= f''(x) \\
    z &= -\tfrac{1}{2} \int [ f''(x)^2 + \tfrac{10}{3} I(x) f'(x)^2 + (1 + I(x)^2 - I''(x)) f(x)^2 ] dx \textrm{.}
\end{align*}
(The coefficient $\frac{10}{3}$ corrects an arithmetic error in equation (6) in \cite[\S9]{Cartan1910}.) So, Cartan's claim would imply that any $2$-plane field with constant root type $[4]$ and symmetry algebra with dimension at least $6$ can be realized in Monge normal form \eqref{equation:local-coframe-partial} by the defining function
\[
    F_I(x, y, p, q, z) = -\tfrac{1}{2} [ q^2 + \tfrac{10}{3} I p^2 + (1 + I^2 - I'') y^2 ]
\]
for some smooth function $I(x)$. Conversely, given any smooth function $I$ on an open subset of $\bbR$, we denote by $D_I := D_{F_I}$ the generic $2$-plane field $\ker\set{\omega^1, \omega^2, \omega^3}$ that $F_I$ defines via \eqref{equation:local-coframe-partial}; in particular, $\partial_q^2 F_I = -1$, which is nowhere zero, so $F_I$ is generic.

If $I$ is constant, then \eqref{equation:structure-equations} defines the structure constants for the (local) symmetry algebra $\mfaut(D_I)$ of $D_I$, which thus has dimension $7$. (Kruglikov analyzes most of these in detail in \cite{Kruglikov} but instead realizes these $2$-plane fields using the Monge normal form equation defined by $F(x, y, p, q, z) = q^m$ for constants $m$. Together these $2$-plane fields [excepting those for $m = 0, 1$, which are nongeneric] account for all of the $2$-plane fields $D_I$ with constant $I$ except the single case $I = \pm \frac{3}{4}$, which is equivalent, for example, to the $2$-plane field defined by the Monge normal form equation $F(x, y, p, q, z) = \log q$ \cite{DoubrovGovorov2013a}.)

If $I$ is not constant, then since it is a function of $x$ we can regard $x$ itself as an invariant of the structure. This invariant turns out to be inessential (see \cite[\S14.5]{Stormark2000}), and so we can identify the structure equations of the symmetry algebra of $D_I$ by pulling back the structure equations \eqref{equation:structure-equations} to a leaf $\set{x = k}$; suppressing pullback notation, we get
\begin{align*}
    d\eta^1 &= 2 \eta^1 \wedge \pi^1 + \eta^2 \wedge \pi^2 + \eta^3 \wedge \eta^4 \\
    d\eta^2 &=   \eta^2 \wedge \pi^1                                              \\
    d\eta^3 &=   \eta^3 \wedge \pi^1 + \eta^4 \wedge \eta^5                       \\
    d\eta^4 &=   \eta^4 \wedge \pi^1                                              \\
    d \pi^1 &= 0                                                                  \\
    d \pi^2 &= \pi^2 \wedge \pi^1                                      \textrm{,}
\end{align*}
so in this case $\dim \mfaut(D_I) = 6$. Directly checking verifies Cartan's claim \cite[Subsection 44]{Cartan1910} that for constant $I$ the symmetry algebra is solvable, and that the same is true for nonconstant $I$.

\begin{remark}
For all smooth functions $I$, the symmetry algebra $\mfaut(D_I)$ contains the $2$-dimensional subalgebra $\ab{\partial_z, y \partial_y + p \partial_p + q \partial_q + 2 z \partial_z}$, but for general $I$ it is difficult to identify the other symmetries explicitly. With computer assistance one can identify for constant $I$ an explicit basis for the full $7$-dimensional symmetry algebra, but any such basis is surprisingly complicated, so we do not give one here.
\end{remark}

Nurowski's formula for a representative $g_F$ of the conformal structure $c_F$ induced by the $2$-plane field $D_F$ specializes dramatically for the functions $F_I$: Evaluating it (and rescaling by a constant for convenience) gives the representative metric
\begin{equation}\label{equation:I(x)-Nurowski-representative}
    g_I := g_{F_I} = -3 I (\omega^1)^2 + 3 \, \omega^1 \omega^4 - 10 I p \, \omega^1 \omega^5 - 3 \, \omega^2 \omega^5 - 2 (\omega^3)^2 \textrm{;}
\end{equation}
the coframe $(\omega^i)$ is that defined in \eqref{equation:local-coframe}. Here and henceforth, we identify a function $I(x)$ with its pullback to the space
\[
    M_I := \set{(x, y, p, q, z) : x \in \dom I}
\]
by the projection $(x, y, p, q, z) \mapsto x$.

We will show that the conformal structures $c_I := c_{F_I} = [g_I]$ determined by the functions $F_I$ all have tractor and ambient holonomy isomorphic to a particular proper subgroup of $\G_2$.

For use in our below proof of the main theorem, we give explicit formulae for some of the objects constructed above for the $2$-plane fields $D_I$.

\begin{proposition}\label{proposition:I(x)-parallel-3-form-data}
Let $I(x)$ be a smooth function whose domain is open and connected, and set $\wtM_I := \bbR_+ \times M_I \times \bbR$.

The generic $2$-plane field $D_I$ on the $5$-manifold $M_I$ is given by
\[
    D_I = \ab{\partial_q, \partial_x + p \partial_y + q \partial_p - \tfrac{1}{2} [ q^2 + \tfrac{10}{3} I p^2 + (1 + I^2 - I'') y^2 ] \partial_z} \textrm{,}
\]
and the Nurowski conformal structure $c_I$ it defines contains the representative $g_I$ defined by \eqref{equation:I(x)-Nurowski-representative}.

The metric $\wtg_I$ on $\wtM_I$ defined by
\begin{equation}\label{equation:I(x)-ambient-metric}
    \wtg_I
        = 2 \rho \,dt^2 + 2 t \,dt \,d\rho + t^2 \left( g_I - \tfrac{2}{3} \rho I \,dx^2 \right)
\end{equation}
is a Ricci-flat ambient metric for $c_I$ (we suppress the notation for pulling back $g_I$ by the projection $\Pi: \wtM_I \to M_I$ defined by $\Pi: (t, u, \rho) \mapsto u$).

The $3$-form $\wtPhi_I \in \Gamma(\Lambda^3 T^* \wtM_I)$ defined by
\begin{multline*}
    \wtPhi_I := C [ -   9 t^2     \,dt     \wedge \omega^1 \wedge \omega^2
                    -   2 t^2     \,dt     \wedge \omega^3 \wedge d\rho
                    -   3 t^3     \omega^1 \wedge \omega^3 \wedge \omega^4 \\
                    \qquad +  10 t^3 I p \omega^1 \wedge \omega^3 \wedge \omega^5
                    -     t^3 I   \omega^1 \wedge \omega^5 \wedge d\rho
                    +   3 t^3     \omega^2 \wedge \omega^3 \wedge \omega^5 \\
                    +     t^3     \omega^4 \wedge \omega^5 \wedge d\rho
                    +
                    (
                    -   3 t^2 I \,dt \wedge \omega^1 \wedge \omega^5
                    +     t^2     dt \wedge \omega^4 \wedge \omega^5
                    ) \rho
                    ]
\end{multline*}
is parallel and satisfies $H(\wtPhi_I) = \wtg_I$, where $C = 2^{-5 / 6} 3^{-1 / 3}$, and where we suppress the notation for the pullback of $I$ and the coframe forms $\omega^a$ by $\Pi$. In particular, $\wtPhi_I$ is split-generic.

The parallel tractor $3$-form associated to $D_I$ is $\Phi_I := \wtPhi_I \vert_{\mcG}$, and the trivialization of the normal conformal Killing form $\phi \in \Gamma(\Lambda^2 T^* M[3])$ with respect to $g_I$ is
\begin{equation} \label{equation:I(x)-defining-2-form}
    \phi_I := -9 C \, \omega^1 \wedge \omega^2 \textrm{.}
\end{equation}
\end{proposition}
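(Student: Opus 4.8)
The plan is to verify Proposition~\ref{proposition:I(x)-parallel-3-form-data} in the order its assertions are listed, treating each as a largely mechanical check that proceeds from the structure equations \eqref{equation:structure-equations} and the coframe $(\omega^a)$ of Subsection~\ref{subsection:ordinary-differential-equations}. First I would confirm the description of $D_I$: since $F_I(x,y,p,q,z) = -\tfrac{1}{2}[q^2 + \tfrac{10}{3} I p^2 + (1 + I^2 - I'') y^2]$, I substitute this $F_I$ into the formula $D_F = \ab{E_4, E_5} = \ab{\partial_q, \partial_x + p\partial_y + q\partial_p + F\,\partial_z}$ recorded at the end of Subsection~\ref{subsection:ordinary-differential-equations}; this is immediate. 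The claim that $c_I$ contains $g_I$ is just the specialization of Nurowski's (unreproduced) formula that was already announced at \eqref{equation:I(x)-Nurowski-representative}, so I would either cite that evaluation or regard it as given.

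The substantive work is the verification that $\wtg_I$ in \eqref{equation:I(x)-ambient-metric} is a Ricci-flat ambient metric and that $\wtPhi_I$ is parallel with $H(\wtPhi_I) = \wtg_I$. I would proceed as follows. First check that $\wtg_I$ is a pre-ambient metric: that it has the correct homogeneity $\delta_s^* \wtg_I = s^2 \wtg_I$ under $(t,u,\rho) \mapsto (st, u, \rho)$, that its pullback to $\mcG \cong \{\rho = 0\}$ recovers $\mbg_0 = t^2 \pi^* g_I$, and that it is straight (the dilation orbits $s \mapsto (st, u, \rho)$ are geodesics, which for a metric of this normal form follows from the standard Fefferman--Graham structure). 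Then I would compute $\Ric(\wtg_I)$ directly and show it vanishes identically---not merely to order $O(\rho^\infty)$---which simultaneously establishes the ambient property and the stronger Ricci-flatness claimed in Theorem~A. This computation is best organized by working in the coframe $(dt, d\rho, \omega^1,\dots,\omega^5)$ and using the structure equations \eqref{equation:structure-equations} together with $dI = J\,dx = J\,\omega^5$ to compute the exterior derivatives and hence the Levi-Civita connection forms; the Ricci-flatness will hinge on cancellations controlled by the particular coefficients in $F_I$, especially the combination $1 + I^2 - I''$. For $\wtPhi_I$, I would apply the Levi-Civita connection $\wtnabla$ of $\wtg_I$ to each of the eight terms and check that $\wtnabla \wtPhi_I = 0$; the identity $H(\wtPhi_I) = \wtg_I$ I would confirm by evaluating the algebraic map \eqref{equation:induced-bilinear-form} on $\wtPhi_I$, which also shows $\wtPhi_I$ is split-generic since $\wtg_I$ is nondegenerate of the right signature. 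The normalizing constant $C = 2^{-5/6} 3^{-1/3}$ is pinned down precisely by forcing $H(\wtPhi_I) = \wtg_I$ rather than a nonzero multiple.

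The final two claims are comparatively short. Since $\Phi_I := \wtPhi_I\vert_{\mcG}$ is by construction the restriction to $\mcG = \{\rho = 0\}$ of a parallel ambient $3$-form, the recovery of tractors from the ambient manifold in Subsection~\ref{subsection:conformal-tractor-ambient-geometry} shows $\Phi_I$ is the parallel tractor $3$-form, and it characterizes $D_I$ by Theorem~\ref{theorem:HS-Theorem-A}. The formula $\phi_{F(q)} = -9C\,\omega^1 \wedge \omega^2$ for the normal conformal Killing $2$-form is then obtained by applying the canonical projection $\Pi_0: \Gamma(\Lambda^3 \mcT^*) \to \Gamma(\Lambda^2 T^* M[3])$ to $\Phi_I$; concretely this amounts to reading off the leading (lowest-homogeneity, $\rho = 0$, $dt$-containing) component of $\wtPhi_I$, which is exactly the $-9t^2\,dt \wedge \omega^1 \wedge \omega^2$ term, matching the description after Theorem~\ref{theorem:HS-Theorem-A} that the kernel of $\Pi_0(\Phi)$ is $[D,D]$ and its raised form spans $D$.

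I expect the main obstacle to be the direct Ricci-flatness computation for $\wtg_I$ and the parallelism check for $\wtPhi_I$: both require computing the connection forms of a signature-$(3,4)$ metric in seven variables where the coframe derivatives are governed by the relatively intricate structure equations \eqref{equation:structure-equations}, and the vanishing depends on nontrivial cancellations among terms involving $I$, $I'$, $I''$, and $J$. This is precisely the kind of bookkeeping for which the paper reports using the \texttt{DifferentialGeometry} Maple package, and I would rely on such machine computation for the bulk of it, presenting only the structural reasoning (homogeneity, straightness, the role of the coefficient $1 + I^2 - I''$) and the final verified identities $\Ric(\wtg_I) = 0$, $\wtnabla \wtPhi_I = 0$, and $H(\wtPhi_I) = \wtg_I$ explicitly.
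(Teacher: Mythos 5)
Your proposal is correct and takes essentially the same route as the paper: the paper's own proof simply cites the formula for $D_I$ from Subsection~\ref{subsection:ordinary-differential-equations}, notes the homogeneity and restriction properties making $\wtg_I$ pre-ambient, and then verifies Ricci-flatness, parallelism of $\wtPhi_I$, the identity $H(\wtPhi_I) = \wtg_I$, and $\phi_I = \Pi_0(\Phi_I)$ by direct (machine-assisted) computation, exactly as you outline. Your additional remarks on straightness and on how the constant $C$ is pinned down are details the paper leaves implicit, but they do not change the method.
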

\begin{proof}
The formula for $D_I$ is given in Subsection \ref{subsection:ordinary-differential-equations}. The metric $\wtg_I$ has homogeneity $2$ by construction, and inspection shows that pull it back to $\mcG$ yields $\mbg_0 = t^2 g_I$, so it is a pre-ambient metric for $c_I$; computing directly shows that it is Ricci-flat, so it is in fact an ambient metric for $c_I$. Computing directly shows that $\wtPhi_I$ is parallel and satisfies $H(\wtPhi_I) = \wtg_I$, and $\phi_I = \Pi_0(\Phi_I)$.
\end{proof}

\begin{proposition}\label{proposition:I(x)-parallel-null-vector-data}
Let $I(x)$ be a smooth function on an open interval. The functions $\sigma(x)$ in the $2$-dimensional solution space $\mcS$ of the homogeneous, linear, second order ordinary differential equation
\begin{equation}\label{equation:I(x)-Ricci-flat-ODE}
    \sigma'' - \tfrac{1}{3} I \sigma = 0 \textrm{,}
\end{equation}
are almost Ricci-flat scales of $c_I$ (trivialized with respect to the representative $g_I$). In particular, every tractor $L_0(\sigma)$ in the corresponding $2$-dimensional vector subspace $L_0(\mcS) \subset \Gamma(\mcT)$ is null.

For each solution $\sigma \in S$, the vector field
\begin{equation}\label{equation:I(x)-parallel-null-vector-field}
    \xi^{\sigma} := d(\sigma t)^{\sharp} = t^{-1}(-\tfrac{2}{3} \sigma' \partial_z + \sigma \partial_{\rho}) \in \Gamma(T\wtM_I)
\end{equation}
is parallel and null, the tractor $\xi^{\sigma} \vert_{\mcG}$ produced by restricting it is just $L_0(\sigma)$, and the infinitesimal symmetry of $c_I$ corresponding to $\sigma$ via \eqref{equation:aEs-to-cKf} is given in the frame \eqref{equation:local-frame} by
\[
    -\tfrac{1}{9}(\sigma E_3 + 4 \sigma' E_4) \textrm{.}
\]
In the definition of $\xi^{\sigma}$, we have suppressed the notation for the pullback of $\sigma$ by the projection $\bbR_+ \times M_I \times \bbR \to M_I$.
\end{proposition}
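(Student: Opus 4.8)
The plan is to verify each of the three claims in Proposition~\ref{proposition:I(x)-parallel-null-vector-data} by direct computation, exploiting the explicit ambient metric \eqref{equation:I(x)-ambient-metric} and the correspondences established in Section~\ref{section:metric-conformal-geometry}. First I would establish that solutions $\sigma$ of \eqref{equation:I(x)-Ricci-flat-ODE} are almost Ricci-flat scales. Since an almost Einstein scale corresponds via $L_0$ to a parallel tractor (the isomorphism \eqref{equation:bijection-parallel-tractors-almost-Einstein-scales}), and since a null parallel tractor corresponds precisely to an almost \emph{Ricci-flat} scale (by the relation $\ab{L_0(\sigma), L_0(\sigma)} = -2\lambda$ recorded after \eqref{equation:bijection-parallel-tractors-almost-Einstein-scales}), it suffices to exhibit $L_0(\sigma)$ as a null parallel tractor. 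By the ambient realization of the tractor bundle, a parallel tractor is the restriction to $\mcG$ of a $\delta_s$-homogeneous parallel vector field on $\wtM_I$, so I would simply verify that $\xi^{\sigma}$ in \eqref{equation:I(x)-parallel-null-vector-field} is parallel and null for $\wtg_I$, which simultaneously handles the second claim.

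For the parallelism of $\xi^{\sigma}$, the cleanest route uses the observation, built into the notation, that $\xi^{\sigma} = d(\sigma t)^{\sharp}$: the function $\sigma t$ on $\wtM_I$ has homogeneity $1$ and should be an ambient harmonic (indeed parallel-gradient) function exactly when $\sigma$ solves \eqref{equation:I(x)-Ricci-flat-ODE}. Concretely, I would compute $\wtnabla d(\sigma t)$ using the Levi-Civita connection of \eqref{equation:I(x)-ambient-metric} and show that the Hessian vanishes precisely under the condition $\sigma'' - \tfrac{1}{3} I \sigma = 0$; this is where the specific coefficient $-\tfrac{2}{3} I$ in the $dx^2 \cdot \rho$ term of $\wtg_I$ enters decisively. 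Nullity of $\xi^{\sigma}$ is then a short direct check: expanding $\wtg_I(\xi^{\sigma}, \xi^{\sigma})$ and using that $\partial_z$ and $\partial_{\rho}$ span a totally null plane with respect to $\wtg_I$ (read off from \eqref{equation:I(x)-ambient-metric} together with the form of $g_I$ in \eqref{equation:I(x)-Nurowski-representative}, in which neither $dz$ nor any term pairing $\partial_z$ with itself appears). Having shown $\xi^{\sigma}$ is parallel and null, its restriction $\xi^{\sigma}\vert_{\mcG}$ is automatically a null parallel tractor, and applying $\Pi_0$ and \eqref{equation:bijection-parallel-tractors-almost-Einstein-scales} identifies it as $L_0(\sigma)$; this closes the loop and retroactively confirms the first claim.

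The last claim, computing the infinitesimal symmetry of $c_I$ corresponding to $\sigma$, I would obtain by substituting the defining $2$-form $\phi_I = -9C\,\omega^1 \wedge \omega^2$ from \eqref{equation:I(x)-defining-2-form} and the scale $\sigma$ into Hammerl--Sagerschnig's formula \eqref{equation:aEs-to-cKf}, namely $\sigma \mapsto \phi^{ab}\sigma_b + \tfrac{1}{4}\phi^{ba}_{\phantom{ba},b}\sigma$. Raising indices of $\phi_I$ with $g_I^{-1}$ and computing the weighted divergence term reduces this to an explicit expression in the frame $(E_a)$; the appearance of $\sigma'$ arises because differentiating $\sigma$ (a function of $x$ alone) along the frame picks up $E_5$-derivatives, and the structure of $g_I^{-1}$ pairs $\omega^1$ and $\omega^2$ with the directions $E_3$ and $E_4$, producing the claimed combination $-\tfrac{1}{9}(\sigma E_3 + 4\sigma' E_4)$.

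\textbf{The main obstacle} I expect is the Hessian computation establishing parallelism of $\xi^{\sigma}$: it requires the full set of Christoffel symbols of the non-diagonal, $x$-dependent metric \eqref{equation:I(x)-ambient-metric}, and the cancellation that forces exactly the ODE \eqref{equation:I(x)-Ricci-flat-ODE} (rather than some nearby equation) is delicate and depends on interplay between the $\rho\,dt^2$, $dt\,d\rho$, and the $\rho$-weighted $dx^2$ terms. This is precisely the kind of computation the paper flags as machine-assisted, so in practice I would organize it so that the single condition $\sigma'' = \tfrac{1}{3}I\sigma$ emerges as the vanishing of the only surviving Hessian component, and verify the remaining components vanish identically; the nullity and the symmetry-field formula are then comparatively routine.
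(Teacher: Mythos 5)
Your proposal is correct, but it organizes the argument around a different primary computation than the paper does. The paper's proof makes the scale claim primary and elementary: for $\sigma = \sigma(x)$ it directly computes the conformal rescaling
\[
    \Ric(\sigma^{-2} g_I) = 3 \sigma^{-1} \bigl( \sigma'' - \tfrac{1}{3} I \sigma \bigr) \, dx^2
\]
on $M - \Sigma$, so that the trivialized scales depending only on $x$ are almost Ricci-flat exactly when \eqref{equation:I(x)-Ricci-flat-ODE} holds, and it then dispatches the parallelism and nullity of $\xi^{\sigma}$, the identification $\xi^{\sigma}\vert_{\mcG} = L_0(\sigma)$, and the symmetry formula as separate direct computations. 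You invert this logic: you make the ambient Hessian computation $\wtnabla \, d(\sigma t) = 0$ primary, and recover the scale claim from the machinery of Subsection \ref{subsection:conformal-tractor-ambient-geometry}---the restriction of a correctly homogeneous parallel ambient vector field is a parallel tractor, $\Pi_0$ maps parallel tractors isomorphically onto almost Einstein scales, and nullity of $L_0(\sigma)$ is equivalent to the scale being almost Ricci-flat. This is legitimate: every fact you invoke is established in the paper, $\xi^{\sigma}$ has the homogeneity required to restrict to a tractor, and the identification $\Pi_0(\xi^{\sigma}\vert_{\mcG}) = \sigma$ follows from Euler's relation, since $\wtg_I(d(\sigma t)^{\sharp}, \bbT) = \bbT(\sigma t) = \sigma t$ for the homogeneity-$1$ function $\sigma t$. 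What your route buys is that you never compute a conformally rescaled Ricci tensor, and ``Ricci-flat'' (rather than merely Einstein) comes for free from nullity; what the paper's route buys is that the first claim stands independently of the tractor formalism, and its computation lives on the $5$-manifold rather than the $7$-manifold.

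One small correction to your final paragraph: raising both indices of $\phi_I = -9C\, \omega^1 \wedge \omega^2$ with $g_I^{-1}$ yields a bivector proportional to $E_4 \wedge E_5$ (the inverse of \eqref{equation:I(x)-Nurowski-representative} pairs $\omega^1$ with $E_4$, and $\omega^2$ with $E_5$ plus a multiple of $E_4$), so the tensorial term $\phi^{ab}\sigma_b$ contributes only the $\sigma' E_4$ part of the answer; the $\sigma E_3$ part comes entirely from the divergence term $\tfrac{1}{4} \phi^{ba}_{\phantom{ba}, b} \sigma$, not from the index raising. This does not affect the validity of your plan---substituting into \eqref{equation:aEs-to-cKf} and computing is exactly what is needed---but it is where the bookkeeping in that computation actually lands.
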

\begin{proof}
The trivialized Ricci-flat scales are exactly the functions $\sigma$ of $(x, y, p, q, z)$ such that $\Ric (\sigma^{-2} g_I \vert_{M - \Sigma})$, where $\Sigma$ is the zero set of $\sigma$. Consider functions $\sigma$ that depend only on $x$; computing directly gives that on $M - \Sigma$,
\[
    \Ric(\sigma^{-2} g_I) = 3 \sigma^{-1} (\sigma'' - \tfrac{1}{3} I \sigma) dx^2 \textrm{,}
\]
where a prime $'$ denotes the derivative $\partial_x$. For such $\sigma$, the Ricci curvature vanishes if and only if the quantity $\sigma'' - \tfrac{1}{3} I \sigma$ does. The remaining claims follow from direct computation.
\end{proof}

\begin{remark}
In fact, direct (but tedious) analysis of the equation $\Ric(\sigma^{-2} g_I \vert_{M - \Sigma}) = 0$ for general $\sigma$ (that is, not just those that depend only on $x$) shows that these account for all of the almost Einstein scales of $g_I$, but we will see this follows indirectly from below results.
\end{remark}

With the above data in hand, we are prepared to prove the main results.

\begin{proof}[Proof of Theorem A]
By Proposition \ref{proposition:I(x)-parallel-3-form-data} $\wtg_I$ admits a parallel split-generic $3$-form $\wtPhi_I$ that satisfies $H(\wtPhi_I) = \wtg_I$, and by Proposition \ref{proposition:I(x)-parallel-null-vector-data} it admits two linearly independent parallel null vector fields, say, $\xi$ and $\eta$. Computing gives that $\wtPhi(\xi, \eta, \cdot) = 0$, so by Proposition \ref{proposition:holonomy-containment}, $\Hol(\wtg_I) \leq \Hoperator_5$.

We now show that the infinitesimal holonomy $\mfhol'(\wtg_I)$ of the ambient metric $\wtg_I$ has dimension at least $5$. Then, since $\Hol'(\wtg_I) \leq \Hol(\wtg_I)$, both holonomy groups are equal to $\Hoperator_5$.

We compute the infinitesimal holonomy of $\wtg_I$ using the definition; its curvature is
\begin{equation}\label{equation:I(x)-ambient-curvature}
    \wtR = 60 t^2 (\omega^1 \wedge \omega^5)^2 \textrm{.}
\end{equation}
Fix $u \in \wtM_I$; then, $\mfhol'_u(\wtg_I)$ admits a filtration $(V_u^r)$ by the vector spaces $V_u^r \subset \End(T_u \wtM)$ spanned by endomorphisms generated by at most $r$ derivatives of curvature: More precisely, $V_u^r = V^r \vert_u$, where
\begin{multline*}
    V^r := \set{\wtR_{AB \phantom{C} D, J_1 \cdots J_k}^{\phantom{AB} C} X^A Y^B Z_1^{J_1} \cdots Z_k^{J_k} : k \leq r; X, Y, Z_1, \ldots, Z_k \in \Gamma(T\wtM_I)} \\
                \subseteq \Gamma(\End(T\wtM_I)) \textrm{.}
\end{multline*}

We compute the filtered pieces one at a time. Respectively the definition and an easy induction using the Leibniz rule give
\[
    \left\{
        \begin{array}{ll}
            V^0 = \set{\wtR_{AB \phantom{C} D}^{\phantom{AB} C} X^A Y^B : X, Y \in \Gamma(T\wtM_I)} \\
            V^r = V^{r - 1} \cup \set{S^C_{\phantom{C} D, J} Z^J : S \in V^{r - 1}; Z \in \Gamma(T\wtM_I)}, r > 0
        \end{array}
    \right. \textrm{.}
\]

Now, consulting \eqref{equation:I(x)-ambient-curvature} and raising an index shows that $V^0 = \ab{ \psi_1 }$, where
\[
    \psi_1 := E_2 \ot \omega^1 + E_4 \ot \omega^5 \in \Gamma(\End(T\wtM_I)) \textrm{,}
\]
and $\ab{ \, \cdot \, }$ denotes the span over $C^{\infty}(T\wtM_I)$.

Next, computing $(\psi_1)^C_{\phantom{c} D, J}$ and contracting with an arbitrary vector field $Z^J$ gives that $V^1 = \ab{\psi_1, \psi_2, \psi_3}$, where
\begin{align*}
    \psi_2 &:= t^{-1} E_2 \ot dt + 15 \partial_{\rho} \ot \omega^5 \\
    \psi_3 &:= 4 E_2 \ot \omega^3 - 3 E_3 \ot \omega^5 - 6 t^{-1} E_4 \ot dt + 90 \partial_{\rho} \ot \omega^1 \textrm{.}
\end{align*}
Continuing gives that $V^2 = \ab{\psi_1, \psi_2, \psi_3, \psi_4}$, where
\begin{multline*}
    \psi_4 := 3 E_1 \ot \omega^5 + 3 E_2 \ot \omega^4 - 10 I p E_2 \ot \omega^5 + 9 t^{-1} E_3 \ot dt + 6 I E_4 \ot \omega^5 + 180 \partial_{\rho} \ot \omega^3 \textrm{.}
\end{multline*}
and that $V^3 = \ab{\psi_1, \psi_2, \psi_3, \psi_4, \psi_5}$, where
\begin{multline*}
    \psi_5 := t^{-1} E_1 \ot dt + t^{-1} I E_4 \ot dt + 15 I \partial_{\rho} \ot \omega^1 - 15 \partial_{\rho} \ot \omega^4 + 50 I p \partial_{\rho} \ot \omega^5 \textrm{.}
\end{multline*}
In particular, the given generating set of $V^3$ is linearly independent at every point in $\wtM$, so $\dim \mfhol'_u(\wtg_I) \geq \dim V^3_u = 5$. (In fact, since the Lie algebra of $\Hol_u(\wtg_I)$ has dimension at most $5$, this gives $\mfhol_u(\wtg_I) = \mfhol'_u(\wtg_I) = V^3_p$.)

If we take $u$ to be a point in $\mcG$, then Theorem \ref{theorem:CGGH} shows that $\mfhol'(\nabla^{\mcT}_I) = \mfhol'(\wtg_I) = \mfh_5$. Since $M_I$ is simply connected, $\Hol(\nabla^{\mcT}_I) = \Hoperator_5$.

We could have avoided using Theorem \ref{theorem:CGGH} and instead showed directly the equality of the infinitesimal holonomy algebras by expanding the derivatives $\wtR_{ab \phantom{C}D, J_1 \cdots J_k}^{\phantom{ab} C}$ to third order in Christoffel symbols; using the relationship between the tractor and ambient curvature tensors, one can then show that the restrictions of $\psi_a$ to $\mcG$, $1 \leq a \leq 5$, which by construction are sections of $\End(\mcT)$, can all be produced by taking derivatives of tractor curvature.
\end{proof}

\begin{corollary}\label{corollary:almost-Einstein-scale-space}
Let $I$ be a smooth function on an open interval. Then, the space $\mbaEs(c_I)$ of almost Einstein scales of $D_I$ is exactly the $2$-dimensional space $\mcS$ of almost Ricci-flat scales identified in Proposition \ref{proposition:I(x)-parallel-null-vector-data}. Then, the dimension of the conformal symmetry algebra of $c_I$ is
\[
\dim \mfaut(c_I) =
    \left\{
        \begin{array}{ll}
            8, & \textrm{$I$ not constant} \\
            9, & \textrm{$I$ constant}
        \end{array}
        \right.
        \textrm{.}
\]
\end{corollary}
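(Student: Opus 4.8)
The plan is to reduce both assertions to the holonomy computation already carried out in Theorem A, combined with the two structural results relating almost Einstein scales, parallel tractors, and conformal symmetries.

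First I would establish the bound $\dim \mbaEs(c_I) \leq 2$. By the isomorphism \eqref{equation:bijection-parallel-tractors-almost-Einstein-scales}, the space $\mbaEs(c_I)$ is in bijective linear correspondence with the space of tractors parallel with respect to $\nabla^{\mcT}_I$. As recalled in Subsection \ref{subsection:holonomy}, a tractor is parallel if and only if it is fixed by the tractor holonomy group, so $\dim \mbaEs(c_I)$ equals the dimension of the subspace of $\mcT_x$ fixed by $\Hol_x(\nabla^{\mcT}_I)$. By Theorem A this group is $\Hoperator_5$, and its proof---via Proposition \ref{proposition:holonomy-containment} and the configuration of the two parallel null vectors with $\Phi(\xi, \eta, \, \cdot \,) = 0$---realizes it on $\mcT_x \cong \bbR^7$ as a conjugate of the representation \eqref{equation:h5-representation}. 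A direct inspection of those matrices shows that the common kernel of $\mfh_5$ acting on $\bbR^7$ is exactly $\langle e_1, e_2 \rangle$: the first two columns of every such matrix vanish, so $e_1$ and $e_2$ are fixed, while letting the parameters $a_{12}, Z_2, s, W_1, W_2$ range freely forces every remaining coordinate of a fixed vector to vanish. Hence the fixed subspace is $2$-dimensional, and since $\Hoperator_5$ is connected this gives $\dim \mbaEs(c_I) = 2$.

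Proposition \ref{proposition:I(x)-parallel-null-vector-data} already exhibits the $2$-dimensional solution space $W$ of $\sigma'' - \tfrac{1}{3} I \sigma = 0$ as a subspace of $\mbaEs(c_I)$, each of whose elements is almost Ricci-flat. Combining this with the bound just obtained yields $W = \mbaEs(c_I)$, which establishes the first assertion and confirms the claim of the remark after Proposition \ref{proposition:I(x)-parallel-null-vector-data} that these scales account for all almost Einstein scales of $g_I$. For the dimension of $\mfaut(c_I)$, I would then invoke Theorem \ref{theorem:HS-Theorem-B}, which gives the vector-space decomposition $\mfaut(c_I) = \mfaut(D_I) \oplus \mbaEs(c_I)$, so that $\dim \mfaut(c_I) = \dim \mfaut(D_I) + 2$. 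The dimension of $\mfaut(D_I)$ was read off earlier from the structure equations \eqref{equation:structure-equations}: when $I$ is constant these are the Maurer--Cartan equations of a $7$-dimensional Lie algebra, and when $I$ is nonconstant the pullback to a leaf $\set{x = k}$ yields the structure equations of a $6$-dimensional Lie algebra. Substituting gives $\dim \mfaut(c_I) = 9$ for constant $I$ and $\dim \mfaut(c_I) = 8$ otherwise.

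The computation is essentially routine. The only point demanding care is the identification of the tractor holonomy representation on $\mcT_x$ with the specific embedded copy of $\Hoperator_5$ in \eqref{equation:h5-representation}, rather than with an abstractly isomorphic but differently embedded one, since the dimension of the fixed subspace depends on the embedding and not on the isomorphism type alone. This identification is precisely what the proof of Theorem A supplies, so no genuine obstacle arises.
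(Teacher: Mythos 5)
Your proposal is correct and follows essentially the same route as the paper's proof: identify almost Einstein scales with parallel tractors via \eqref{equation:bijection-parallel-tractors-almost-Einstein-scales}, use Theorem A to reduce to computing the fixed subspace of the concrete $\Hoperator_5$-representation \eqref{equation:h5-representation} (which is $\ab{e_1, e_2}$, hence $2$-dimensional), and then apply the decomposition of Theorem \ref{theorem:HS-Theorem-B} together with the rank of $\mfaut(D_I)$ read off from the structure equations. Your explicit remarks that $W = \mbaEs(c_I)$ follows from the containment plus the dimension count, and that the fixed-subspace dimension depends on the embedding of $\Hoperator_5$ rather than its isomorphism type, are points the paper leaves implicit, but the argument is the same.
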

\begin{proof}
Analyzing the representation $\bbV$ of $\G_2$ in Subsection \ref{subsection:G2} shows that the vectors preserved by the restriction of that representation to $\Hoperator_5 = \Stab_{\G_2} (e_1) \cap \Stab_{\G_2} (e_2)$ are exactly those in the $2$-dimensional subspace $\ab{e_1, e_2}$. Since $\Hol(\nabla^{\mcT}_I) = \Hoperator_5$, the space of parallel sections of $\mcT$ is $2$-dimensional, and by the correspondence \eqref{equation:bijection-parallel-tractors-almost-Einstein-scales}, $\mbaEs(c_I)$ has dimension $2$ and hence must coincide with $\mcS$.

Now, Theorem \ref{theorem:HS-Theorem-B} gives that
\[
    \dim \mfaut(c_I) = \dim \mfaut(D_I) + \dim \mbaEs(c_I) = \dim \mfaut(D_I) + 2 \textrm{.}
\]
By the discussion at the beginning of the section, the symmetry algebra $\mfaut(D_I)$ has dimension $7$ if $I$ is constant and dimension $6$ if not.
\end{proof}

\begin{remark}
Metrics admitting the types of parallel objects the metrics $\wtg_I$ do admit many additional parallel objects. Let $(N, h)$ be a connected pseudo-Riemannian $7$-manifold that admits a parallel split-generic $3$-form $\Psi$ that satisfies $H(\Psi) = h$ and a special null plane field $S$ comprised of parallel vector fields, then $S$ itself is parallel, as is the conull $5$-plane field $\smash{S^{\perp} \supset S}$. Given any nonzero parallel null vector field $\xi \in \Gamma(S)$, all plane fields in the complete flag field
\[
    0 \subset [\xi] \subset S \subset \Ann \xi \subset (\Ann \xi)^{\perp} \subset S^{\perp} \subset [\xi]^{\perp} \subset TN
\]
on $N$ are parallel (and null or conull) and hence are subrepresentations of $\Hol(h)$; here, $\Ann \xi$ is the $3$-plane field with fiber $(\Ann \xi)_x := \Ann (\xi_x)$. Given a second parallel null vector field that is not a multiple of $\xi$, one can produce further parallel plane fields by forming the intersections and spans (and the orthogonal plane fields thereof) of the pieces of the corresponding flag fields.

If we fix $u \in N$ and identify $\xi_u$ with $e_1 \in \bbV$ and $S$ with $\ab{e_1, e_2}$, then
\begin{align*}
    \Ann \xi         _u &= \ab{e_1, e_2, e_3               } \textrm{,}     \\
    \Ann \xi ^{\perp}_u &= \ab{e_1, e_2, e_3, e_4          } \textrm{,}     \\
           S ^{\perp}_u &= \ab{e_1, e_2, e_3, e_4,      e_6} \textrm{, and} \\
        [\xi]^{\perp}_u &= \ab{e_1, e_2, e_3, e_4, e_5, e_6} \textrm{.}
\end{align*}
Then, consulting \eqref{equation:h5-representation} shows that even though the holonomies of the metrics $\wtg_I$ and the corresponding connections $\nabla^{\mcT}_I$ do not act irreducibly, they do act indecomposably.
\end{remark}

\begin{remark}
Let $I$ be a smooth function on an open interval, and let $\xi$ be a parallel vector field on $(\wtM_I, \wtg_I)$ (see \eqref{equation:I(x)-parallel-null-vector-field}). Again if we fix $u \in \wtM_I$ and identify $\xi_u$ with $e_1 \in \bbV$, then $\smash{[\xi_u]^{\perp} = \ab{e_1, e_2, e_3, e_4, e_5, e_6}}$, and consulting \eqref{equation:h5-representation} again shows that this subrepresentation of $\Hoperator_5$ is faithful but that no proper subrepresentation thereof is. (Restricting to $\mcG \subset \wtM_I$ yields the analogous statement for parallel tractors and tractor holonomy.) Since $[\xi_u]$ is null, however, the pullback of $\wtg_I$ to any leaf $L$ of the foliation defined by the plane field $\smash{[\xi]^{\perp}}$ (which is integrable because it is parallel) via the inclusion $\iota: L \hookrightarrow \wtM_I$ is degenerate: By construction, at each point $u \in L$, $\xi_u$ is in $T_u L$ and is orthogonal to every vector in that space.

The $2$-form $\iota^* \wtg_I$ degenerates only along this direction, however, so it descends to a pseudo-Riemannian metric $g$ on the space of integral curves of $\xi\vert_L \in \Gamma(TL)$. The representation $\smash{\Hol_u(\wtg_I) \vert_{[\xi_u]^{\perp}}}$ fixes $\xi_u$, so it descends to a representation on the quotient space $\smash{[\xi_u]^{\perp} / [\xi_u]}$, which by construction we may identify with the holonomy representation $\Hol_{[u]}(g)$ of $g$ at the integral curve $[u]$ through $u$. If we yet again identify $\xi_u$ with $e_1 \in \bbV$, then $[\xi_u]^{\perp} / [\xi_u] \cong \ab{e_2, e_3, e_4, e_5, e_6}$,
and consulting \eqref{equation:h5-representation} shows that $\Hol_{[u]}(g) \cong \bbR^3$.

Now, let $S$ be the parallel $2$-plane field comprising the null parallel vector fields of $\wtg_I$. By construction, each plane field in the complete flag field
\[
    0 \subset S / [\xi]  \subset \Ann \xi / [\xi] \subset (\Ann \xi)^{\perp} / [\xi] \subset S^{\perp} / [\xi] \subset TL / [\xi]
\]
on the space of integral curves is parallel (and null or conull). (In fact, \eqref{equation:h5-representation} shows that every $2$-, $3$-, or $4$-plane $P_u$ such that $\smash{S_u / [\xi_u] \subseteq P_u \subseteq S^{\perp}_u / [\xi_u]}$ extends to a parallel $k$-plane field.)

Again, $\smash{S^{\perp}_u / [\xi_u]}$ is a faithful representation of $\Hoperator_5$ but no proper subrepresentation is. Furthermore the pullback of $g$ to any leaf of the foliation determined by the plane field $\smash{S^{\perp} / [\xi]}$ is again degenerate, but at each point in the leaf, the pullback degenerates only along the direction $S / [\xi]$. So, it descends to a metric on the ($3$-dimensional) space of integral curves of the line field $S / [\xi]$, which we may also interpret as the space of leaves of the foliation determined by the plane field $S \vert_R$ in a leaf $R$ of the foliation determined by $S^{\perp} \subset TM$, and again consulting \eqref{equation:h5-representation} shows that the holonomy of this metric is trivial.
\end{remark}

One can interpret the ideas in the previous remark to determine the metric holonomy of distinguished representatives of the conformal classes $c_I$.

\begin{proposition}
Let $I$ be a smooth function on an open interval, and let $g \in c_I$ be a Ricci-flat representative. Then, $g$ admits a parallel null vector field and $\Hol(g) \cong \bbR^3$.
\end{proposition}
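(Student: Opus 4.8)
The plan is to realize the Ricci-flat representatives of $c_I$ as exactly the metrics $\sigma^{-2} \mbg$ cut out by the nowhere-zero scales $\sigma$ in the $2$-dimensional space $W$ of almost Ricci-flat scales (Corollary \ref{corollary:almost-Einstein-scale-space} identifies $\mbaEs(c_I)$ with $W$), and then to extract both assertions from the parallel ambient data recorded in Proposition \ref{proposition:I(x)-parallel-null-vector-data} and exploited in the proof of Theorem A. So I would first fix a nowhere-vanishing $\sigma \in W$, set $g := \sigma^{-2} \mbg$, and pick a second solution $\tau \in W$ of \eqref{equation:I(x)-Ricci-flat-ODE} linearly independent from $\sigma$; since $\sigma, \tau$ are independent solutions of a second-order linear ODE, their ratio $h := \tau / \sigma$ is non-constant.

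For the parallel null vector field I would argue invariantly on the tractor bundle. Because $g$ is Ricci-flat its Schouten tensor vanishes, so in the scale determined by $g$ the parallel tractor $L_0(\tau)$ splits as $(h, \nabla^g h, \rho)$ with $\nabla^g_a \nabla^g_b h = -\rho\, g_{ab}$ and $\rho$ constant. The two null scales put us in the $\Hoperator_5$ configuration of Proposition \ref{proposition:stabilizer-two-null-vectors}: the proof of Theorem A gives $\wtPhi_I(\xi, \eta, \,\cdot\,) = 0$ for the corresponding parallel null vectors, so restricting to $\mcG$ yields $L_0(\tau) \in \Ann L_0(\sigma) \subset [L_0(\sigma)]^{\perp}$ and hence $\langle L_0(\sigma), L_0(\tau) \rangle = 0$, which is exactly $\rho = 0$. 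Then $\nabla^g \nabla^g h = 0$, so $v := (dh)^{\sharp}$ is parallel, and its null-ness follows from $0 = \langle L_0(\tau), L_0(\tau) \rangle = g(v, v) + 2 h \rho = g(v, v)$; non-constancy of $h$ ensures $v \neq 0$.

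For the holonomy I would match $g$ with the quotient metric constructed in the remark preceding this proposition: starting from the parallel null vector field $\xi := \xi^{\sigma}$ of \eqref{equation:I(x)-parallel-null-vector-field} on $(\wtM_I, \wtg_I)$, one restricts $\wtg_I$ to a leaf of the integrable plane field $[\xi]^{\perp}$ and passes to the space of integral curves of $\xi$, obtaining a $5$-dimensional metric whose holonomy the remark computes via \eqref{equation:h5-representation} to be $\bbR^3$; the invariant $v$ above is the descent of the second parallel null vector $\eta = \xi^{\tau}$ through $S / [\xi]$. Since $M_I$ is connected this gives $\Hol(g) \cong \bbR^3$. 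The main obstacle is precisely this identification: one must verify that the ambient quotient metric is genuinely the conformal representative $\sigma^{-2} \mbg$, which means unwinding the trivialization in \eqref{equation:I(x)-ambient-metric} and the definition $\xi^{\sigma} = d(\sigma t)^{\sharp}$ and checking that the metric induced on a slice transverse to $\xi$ inside a leaf agrees with $\sigma^{-2} g_I$ up to the expected homogeneity factors. Should that identification prove awkward to pin down cleanly, the fallback is to compute $\mfhol'(g)$ directly from the curvature of $\sigma^{-2} g_I$ in the manner of the infinitesimal-holonomy computation in the proof of Theorem A and check that the resulting abelian algebra is $3$-dimensional, matching the upper bound forced by the parallel null $v$.
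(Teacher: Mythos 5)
Your proposal is sound, and its overall architecture for the holonomy statement matches the paper's: the paper also proves $\Hol(g) \cong \bbR^3$ by identifying $g$ with the quotient metric constructed in the remark immediately preceding the proposition. The genuine difference is your treatment of the parallel null vector field, which the paper never argues separately (it is left implicit in the identification with the quotient metric, on which the descended pieces of the parallel flag field live). Your tractor computation is self-contained and correct: in the Ricci-flat scale $g$ one has $L_0(\sigma) = (1, 0, 0)$ and $L_0(\tau) = (h, \nabla^g h, \rho)$ with $\nabla^g_a \nabla^g_b h = -\rho g_{ab}$ and $\rho$ constant; the orthogonality $\ab{L_0(\sigma), L_0(\tau)} = \rho = 0$ follows as you say from $\Phi_I(L_0(\sigma), L_0(\tau), \cdot\,) = 0$ and $\Ann L_0(\sigma) \subset [L_0(\sigma)]^{\perp}$, so $v = (dh)^{\sharp}$ is parallel, and $g(v, v) = \ab{L_0(\tau), L_0(\tau)} = 0$. (Even more directly: $L_0$ is linear and every element of $L_0(W)$ is null, so polarizing $\ab{L_0(\sigma + \tau), L_0(\sigma + \tau)} = 0$ gives the orthogonality without invoking $\Ann$; and since $v$ is parallel and $M_I$ is connected, ``not identically zero'' upgrades to ``nowhere zero.'') Your citation of Corollary \ref{corollary:almost-Einstein-scale-space} to place $\sigma$ in $W$ is also the right one; Proposition \ref{proposition:I(x)-parallel-null-vector-data} by itself only gives the reverse inclusion.

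The step you flag as ``the main obstacle''---that the ambient quotient metric is genuinely $g$---is precisely the entire content of the paper's proof, and it closes by a mechanism slicker than the slice-by-slice comparison with homogeneity factors that you anticipate. The leaves of $[\xi^{\sigma}]^{\perp} = \ker d(t\sigma)$ are the hypersurfaces $L_C = \set{t = C/\sigma(x)}$; an explicit computation of the integral curves of $\xi^{\sigma}\vert_{L_C}$ (they translate only the $z$- and $\rho$-coordinates, linearly in the curve parameter) shows that each integral curve meets $L_C \cap \mcG = L_C \cap \set{\rho = 0}$ exactly once, so this intersection is a global cross-section and serves as a model for the space of integral curves. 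The identification is then tautological rather than computational: $L_C \cap \mcG$ (take $C = 1$) is exactly the image of $g$ regarded as a section $M \to \mcG$, and $\wtg_I$ restricts on $\mcG$ to the tautological tensor $\mbg_0$, whose pullback along that section is by definition $g$ itself. So no trivialization factors need to be chased, and your fallback of computing $\mfhol'(g)$ from the curvature of $\sigma^{-2} g_I$, while viable, is unnecessary and is not the paper's route.
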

\begin{proof}
Let $\sigma \in \Gamma(\mcD[1])$ be the Ricci-flat scale so that $g = \sigma^{-2} \mbg$ (in particular, $\sigma$ vanishes nowhere, and so by changing sign if necessary, we may assume that $\sigma$ is everywhere positive). By Proposition \ref{proposition:I(x)-parallel-null-vector-data}, $\sigma$ must be in the $2$-dimensional vector space identified therein, and the vector field $\xi^{\sigma} \in \Gamma(T\wtM_I)$ is parallel with respect to the Levi-Civita connection of $\wtg_I$. Then, the orthogonal plane field $\smash{[\xi^{\sigma}]^{\perp}}$ is $\ker (\xi^{\sigma})^{\flat} = \ker d(t \sigma)$ (where, as in that proposition, $\sigma$ has been trivialized by the representative $g_I \in c_I$), and the leaves of the foliation defined by this plane field are the hypersurfaces $L_C := \set{t = C / \sigma(x)}$, $C > 0$. On  $L_C$, the integral curve $[u_0](\tau)$ of $\xi^{\sigma} \vert_{L_C}$ satisfying the arbitrary initial condition $[u_0](0) = u_0 := (C / \sigma(x_0), x_0, y_0, p_0, q_0, z_0, \rho_0)$ is
\[
    \gamma(\tau) = (C / \sigma(x_0), x_0, y_0, p_0, q_0, z_0 - \tfrac{1}{3 C} \sigma(x_0) \sigma'(x_0) \tau, \rho_0 + \tfrac{1}{C} \sigma(x_0)^2 \tau) \textrm{,}
\]
which is defined for all time $\tau$. In particular, every integral curve $[u_0]$ intersects the hypersurface $L_C \cap \mcG = L \cap \set{\rho = 0}$ exactly once, so we may identify it with the space of integral curves $[u]$, but by construction $L_C \cap \mcG$ is the image of the metric $g$ regarded as a section $M \to \mcG$. Unwinding definitions shows that (1) the induced metric on the space of integral curves of $\xi^{\sigma} \vert_L$ is just the pullback of $\wtg_I$ to $L_C \cap \mcG$, and (2) if we identify $M$ with $L_C \cap \mcG$, this pullback is just $g$ itself.
\end{proof}

The referee observed that parts of the proof of this proposition can be simplified some using some easy facts about the ambient metrics of Ricci-flat metrics, including that for such a metric $g$, there are coordinates $r$ and $s$ on the ambient space for which the metric $\wtg = 2 dr\, ds + r^2 g$ is an ambient metric for $[g]$, where as usual we suppress pullback notation.

\subsection{Plane fields defined by ODEs $z' = F(y'')$}\label{subsection:F(q)-plane-fields}

Many of the above results hold just as well for the class of $2$-plane fields $D_{F(q)}$ defined via \eqref{equation:local-coframe-partial} for the smooth functions $F(q)$ that depend only on $q$ and for which $F''(q)$ is nowhere zero, so that $D_{F(q)}$ is generic; by Subsection \ref{subsection:ordinary-differential-equations}, these $2$-plane fields encode ordinary differential equations $z' = F(y'')$, where $y$ and $z$ are functions of $x$. (Nurowski considered this class of $2$-plane fields as an example in \cite{Nurowski2005}, essentially gave equations \eqref{equation:F(q)-representative metric} and \eqref{equation:F(q)-Ricci-flat-ODE}, and observed that for generic functions $F(q)$ the root type of $D_{F(q)}$ is generically equal to $[4]$; see below.) We identify $F(q)$ with its pullback to the set
\[
    M_{F(q)} := \set{(x, y, p, q, z) : q \in \dom F}
\]
by the projection $(x, y, p, q, z) \mapsto q$. These $2$-plane fields again all have symmetry algebra with dimension at least $6$, and we can now identify these symmetries explicitly:
\begin{multline}\label{equation:F(q)-symmetry-algebra}
    \mfaut(D_{F(q)}) \geq \bigg\langle \partial_x, \partial_y, \partial_z, x \partial_x + 2 y \partial_y + p \partial_p + z \partial_z,
                                x \partial_y + \partial_p, \\
                                F' \partial_x + (p F' - z) \partial_y  + (q F' - F) \partial_p + \int F'' F \, dq \cdot \partial_z \bigg\rangle  \textrm{.}
\end{multline}
As mentioned earlier, if $D$ is a generic $2$-plane field for which $\dim \mfaut(D) = 7$, then $D$ is locally equivalent either to $D_{q^m}$ for some constant $m$ or to $D_{\log q}$. Conversely, if $m \not\in \set{-1, 0, \frac{1}{3}, \frac{2}{3}, 1, 2}$, then $\dim \mfaut(D_{q^m}) = 7$ and the symmetry algebra is spanned by the right-hand side of \eqref{equation:F(q)-symmetry-algebra} and $y \partial_y + p \partial_p + q \partial_q + m z \partial_z$ \cite{Kruglikov}; the symmetry algebra of $D_{\log q}$ is spanned by the right-hand side of \eqref{equation:F(q)-symmetry-algebra} and $y \partial_y + p \partial_p + q \partial_q + x \partial_z$. If $m \in \set{-1, \frac{1}{3}, \frac{2}{3}, 2}$, then $D_{q^m}$ is locally flat, and if $m = 0$ or $m = 1$, then $\partial_q^2 (q^m) = 0$ and so $D_{q^m}$ is not generic.

Computing directly using the procedure given in \cite[\S5]{GrahamWillse2012G2}, the Cartan curvature of the $2$-plane field $D_{F(q)}$ is
\begin{equation}\label{equation:F(q)-Cartan-curvature}
    A_{F(q)} = (F'')^{-4} \Psi[F''] \, dq^4 \textrm{,}
\end{equation}
where $\Psi: C^{\infty}(\dom F) \to C^{\infty}(\dom F)$ is the nonlinear differential operator
\begin{equation}\label{equation:Psi-operator}
    \Psi[U] := 10 U^{(4)} U^3 - 80 U''' U' U^2 - 51 (U'')^2 U^2 + 336 U'' (U')^2 U - 224 (U')^4 \textrm{.}
\end{equation}
So, these $2$-plane fields are closely related to the class of $2$-plane fields $D_I$ described above, but the root type of a $2$-plane field $D_{F(q)}$ need not be constant: $D_{F(q)}$ has root type $[\infty]$ at $(x, y, p, q, z) \in M_{F(q)}$ if $\Psi[F''](q) = 0$ and root type $[4]$ at that point otherwise. To the knowledge of the author, it is unknown whether every $D_I$ can be locally realized (at each point) as a $2$-plane field $D_{F(q)}$ for some $F(q)$.

For completeness, we collect some explicit data for the $2$-plane fields $D_{F(q)}$ in two propositions; they are produced in the same way as are their analogues in Propositions \ref{proposition:I(x)-parallel-3-form-data} and \ref{proposition:I(x)-parallel-null-vector-data}, so we suppress the proofs.

\begin{proposition}\label{proposition:F(q)-parallel-3-form-data}
Let $F(q)$ be a function on an open interval such that $F''$ vanishes nowhere, and set $\wtM_{F(q)} := \bbR_+ \times M_{F(q)} \times \bbR$.

The generic $2$-plane field $D_{F(q)}$ on the $5$-manifold $M_{F(q)}$ is given by
\[
    D_{F(q)} = \ab{\partial_q, \partial_x + p \partial_y + q \partial_p + F(q) \partial_z} \textrm{,}
\]
and the Nurowski conformal structure $c_{F(q)}$ it defines contains the (again, relatively simple) representative
\begin{multline}\label{equation:F(q)-representative metric}
    g_{F(q)} = 30 (F'')^4 \omega_1 \omega_4 + [- 3 F^{(4)} F'' + 4 (F''')^2] \omega_2^2 \\
                - 10 F''' (F'')^2 \omega_2 \omega_3 + 30 (F'')^3 \omega_2 \omega_5 - 20 (F'')^4 \omega_3^3 \textrm{.}
\end{multline}

The metric $\wtg_{F(q)}$ on $\wtM_{F(q)}$ defined by
\begin{equation}\label{equation:F(q)-ambient-metric}
    \wtg_{F(q)}
        = 2 \rho \,dt^2 + 2 t \,dt \,d\rho + t^2 \left( g_{F(q)} - \rho \frac{17 F^{(4)} F'' - 56 (F''')^2}{5 (F'')^2} \, (\omega^4)^2 \right)
\end{equation}
is a Ricci-flat ambient metric for $c_{F(q)}$ (we suppress the notation for pulling back $g_{F(q)}$ and $\omega^4$ by the projection $\Pi: \wtM_{F(q)} \to M$ defined by $\Pi: (t, x, \rho) \mapsto x$).

The $3$-form $\wtPhi_{F(q)} \in \Gamma(\Lambda^3 T^* \wtM_{F(q)})$ defined by
\begin{multline*}
    \wtPhi_{F(q)} := C'[                     (F'')^5      t^2 dt \wedge \omega^1 \wedge \omega^2
                        +     \tfrac{1}{  9} F'''         t^2 dt \wedge \omega^2 \wedge d\rho \\
                            - \tfrac{1}{ 45} (F'')^2      t^2 dt \wedge \omega^3 \wedge d\rho
                        +     \tfrac{5}{  3} F''' (F'')^4 t^3 \omega^1 \wedge \omega^2 \wedge \omega^4 \\
                            - \tfrac{1}{  3} (F'')^6      t^3 \omega^1 \wedge \omega^3 \wedge \omega^4
                            - \tfrac{1}{  3} (F'')^5      t^3 \omega^2 \wedge \omega^3 \wedge \omega^5 \\
                        +     \tfrac{1}{900} (F^{(4)} - 168 (F''')^2 (F'')^{-1})
                                                          t^3 \omega^2 \wedge \omega^4 \wedge d\rho \\
                            + \tfrac{7}{ 90} F''' F''     t^3 \omega^3 \wedge \omega^4 \wedge d\rho
                            + \tfrac{1}{ 90} (F'')^2      t^3 \omega^4 \wedge \omega^5 \wedge d\rho \\
                        + [
                              \tfrac{1}{900} (103 F^{(4)} - 504 (F''')^2 (F'')^{-1}) t^2 dt \wedge \omega^2 \wedge \omega^4 \\
                            + \tfrac{7}{ 90} F''' F''                                t^2 dt \wedge \omega^3 \wedge \omega^4
                            + \tfrac{1}{ 90} (F'')^2                                 t^2 dt \wedge \omega^4 \wedge \omega^5
                          ] \rho
                       ] \textrm{,}
\end{multline*}
is parallel and satisfies $H(\wtPhi_{F(q)}) = \wtg_{F(q)}$, where $C' = 2^{1 / 3} 3^{5 / 3} 5^{3 / 2}$, and where we suppress the notation for the pullback of $F$ and the coframe forms $\omega^a$ by $\Pi$. In particular, $\wtPhi_{F(q)}$ is split-generic.

The parallel tractor $3$-form associated to $D_{F(q)}$ is $\Phi_{F(q)} := \wtPhi_{F(q)} \vert_{\mcG}$, and the trivialization of the normal conformal Killing form $\phi \in \Gamma(\Lambda^2 T^* M[3])$ with respect to $g_{F(q)}$ is
\begin{equation} \label{equation:F(q)-defining-2-form}
    \phi_{F(q)} := C' (F'')^5 \omega^1 \wedge \omega^2 \textrm{.}
\end{equation}
\end{proposition}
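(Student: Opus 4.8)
The plan is to establish each clause by the same direct computational route used for the analogous Proposition~\ref{proposition:I(x)-parallel-3-form-data}, exploiting the fact that the Monge function $F(q)$ depends only on the single jet variable~$q$.

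First, the expression for $D_{F(q)}$ is immediate from Subsection~\ref{subsection:ordinary-differential-equations}: specializing the dual frame $(E_a)$ to a function $F$ of $q$ alone gives $E_4 = \partial_q$ and $E_5 = \partial_x + p\,\partial_y + q\,\partial_p + F(q)\,\partial_z$, whence $D_{F(q)} = \ab{E_4, E_5}$. For the representative $g_{F(q)}$, I would substitute $F = F(q)$ into Nurowski's sextic formula for $g_F$ in the coframe $(\omega^a)$. Since $F$ has no dependence on $x$, $y$, $p$, or $z$, the great majority of that formula's terms---those carrying partial derivatives of $F$ in these variables---vanish identically, collapsing the expression to the five terms of \eqref{equation:F(q)-representative metric}. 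This is a mechanical but bookkeeping-heavy substitution, best carried out with the computer algebra used elsewhere in the paper.

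The substantive work lies in the ambient data. For $\wtg_{F(q)}$ I would first note that it is a straight pre-ambient metric essentially by inspection: the normal-form shape \eqref{equation:F(q)-ambient-metric} makes the homogeneity $\delta_s^* \wtg_{F(q)} = s^2 \wtg_{F(q)}$ and the straightness of the dilation orbits manifest, and because $d\rho$ and the $\rho\,dt^2$ term vanish on restriction to $\mcG = \set{\rho = 0}$, one has $\iota^* \wtg_{F(q)} = t^2 g_{F(q)} = \mbg_0$. I would then compute the Levi-Civita connection and Ricci tensor of $\wtg_{F(q)}$ directly and verify $\Ric(\wtg_{F(q)}) = 0$; since the base dimension $n = 5$ is odd, an exactly Ricci-flat straight pre-ambient metric trivially satisfies the $O(\rho^{\infty})$ condition, so it meets the definition of an ambient metric and, by the Fefferman-Graham theorem, is the essentially unique one for $c_{F(q)}$. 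Next I would check directly that $\wtnabla \wtPhi_{F(q)} = 0$ and that $H(\wtPhi_{F(q)}) = \wtg_{F(q)}$ via \eqref{equation:induced-bilinear-form}; the latter identity exhibits this bilinear form as nondegenerate of signature $(3,4)$, so $\wtPhi_{F(q)}$ is split-generic. The remaining tractor assertions then follow formally from Subsection~\ref{subsection:conformal-tractor-ambient-geometry}: restricting the parallel, correctly homogeneous $3$-form $\wtPhi_{F(q)}$ to $\mcG$ yields the parallel tractor $3$-form $\Phi_{F(q)}$, and applying the canonical projection $\Pi_0$ and trivializing with respect to $g_{F(q)}$ produces $\phi_{F(q)}$ as in \eqref{equation:F(q)-defining-2-form}.

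The hard part will be the Ricci-flatness and parallelism verifications. In contrast to the $D_I$ case, where the coefficients were polynomial in $I$ and its derivatives, the components of $\wtg_{F(q)}$ and $\wtPhi_{F(q)}$ are \emph{rational} in $F''$---note the factors $(F'')^{-1}$ and the high-order derivative $F^{(4)}$---so the Christoffel symbols and curvature carry derivatives of $F$ to several orders that must be reorganized repeatedly via the chain rule in $q$. Keeping these nonlinear dependencies straight, and confirming that the many resulting terms cancel to give $\Ric = 0$ and $\wtnabla \wtPhi_{F(q)} = 0$ exactly, is where the computation is most delicate and where the computer algebra is effectively indispensable.
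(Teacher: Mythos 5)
Your proposal is correct and follows essentially the same route as the paper: the paper suppresses this proof precisely because it is produced ``in the same way'' as Proposition~\ref{proposition:I(x)-parallel-3-form-data}, whose proof is exactly your argument---read off $D_{F(q)}$ from Subsection~\ref{subsection:ordinary-differential-equations}, specialize Nurowski's formula for the representative, verify homogeneity and the restriction $\iota^*\wtg = \mbg_0$ plus direct (computer-assisted) Ricci-flatness for the ambient metric, check $\wtnabla\wtPhi_{F(q)} = 0$ and $H(\wtPhi_{F(q)}) = \wtg_{F(q)}$ directly, and obtain the tractor data by restriction to $\mcG$ and applying $\Pi_0$.
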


The form of the ambient metric \eqref{equation:F(q)-ambient-metric} is different in \cite{Nurowski2008}; there, Nurowski starts with a different representative metric and uses supplemental variables $t$ and $u := -\rho t$ on the ambient space instead of $t$ and $\rho$.

\begin{proposition}\label{proposition:F(q)-parallel-null-vector-data}
Let $F(q)$ be a function on an open interval such that $F''$ vanishes nowhere. The functions $\sigma$ in the $2$-dimensional solution space $S$ of the homogeneous, linear, second order ordinary differential equation
\begin{equation}\label{equation:F(q)-Ricci-flat-ODE}
    10 (F'')^2 \sigma'' - 40 F''' F'' \sigma' + (-17 F^{(4)} F'' + 56 (F''')^2) \sigma = 0 \textrm{,}
\end{equation}
are almost Ricci-flat scales of $c_{F(q)}$ (that have been trivialized with respect to the representative $g_{F(q)}$). In particular, every tractor $L_0(\sigma)$ in the corresponding ($2$-dimensional) vector subspace $L_0(S) \subset \Gamma(\mcT)$ is null.

For each solution $\sigma$, the vector field
\[
    \xi^{\sigma} := d(\sigma t)^{\sharp} = \tfrac{1}{15} (F'')^{-4} t^{-1} \sigma' \partial_y + t^{-1} \sigma \partial_{\rho} \in \Gamma(T\wtM_{F(q)})
\]
is parallel, and the tractor $\xi^{\sigma} \vert_{\mcG}$ produced by restricting it is just $L_0(\sigma)$, where we have suppressed the notation for the pullback of $\sigma$ by the projection $\bbR_+ \times M_{F(q)} \times \bbR \to M_{F(q)}$.
\end{proposition}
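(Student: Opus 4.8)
The plan is to mirror the proof of Proposition \ref{proposition:I(x)-parallel-null-vector-data}, exploiting the explicit data assembled in Proposition \ref{proposition:F(q)-parallel-3-form-data}. First I would identify the almost Ricci-flat scales. Since $g_{F(q)}$ represents $c_{F(q)}$, a trivialized scale $\sigma \in \Gamma(\mcD[1])$ is almost Ricci-flat exactly when $\sigma^{-2} g_{F(q)}$ is Ricci-flat off the zero set $\Sigma$ of $\sigma$. Restricting attention to functions $\sigma = \sigma(q)$ and computing $\Ric(\sigma^{-2} g_{F(q)})$ directly from \eqref{equation:F(q)-representative metric} (with computer assistance), I expect the Ricci tensor to collapse to the single term $c\,\sigma^{-1}[10(F'')^2\sigma'' - 40 F''' F''\sigma' + (-17 F^{(4)} F'' + 56(F''')^2)\sigma]\,dq^2$ for a nonzero constant $c$, exactly as the analogous $D_I$ computation produced $3\sigma^{-1}(\sigma'' - \tfrac{1}{3} I \sigma)\,dx^2$. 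This vanishes precisely when $\sigma$ solves \eqref{equation:F(q)-Ricci-flat-ODE}, establishing the first claim.

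For the nullity of $L_0(\sigma)$ I would invoke the relation $\ab{L_0(\sigma), L_0(\sigma)} = -2\lambda$ between a parallel tractor and the Einstein constant $\lambda$ of its scale, recalled at the end of Subsection \ref{subsection:conformal-tractor-ambient-geometry}: since each $\sigma \in S$ is almost Ricci-flat, so $\lambda = 0$, the tractor $L_0(\sigma)$ is null. Independently (and as a consistency check), once $\xi^{\sigma}$ is identified with $L_0(\sigma)$ below one verifies nullity by hand, since in \eqref{equation:F(q)-ambient-metric} neither $\partial_{\rho}$ nor $\partial_y = E_1$ has nonzero self-pairing (the coframe expression \eqref{equation:F(q)-representative metric} contains no $(\omega^1)^2$ term) and $\wtg(\partial_y, \partial_{\rho}) = 0$, so that $\wtg(\xi^{\sigma}, \xi^{\sigma}) = 0$.

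Next I would show that $\xi^{\sigma} = d(\sigma t)^{\sharp}$ is parallel for $\wtnabla$. Because $\sharp$ is metric-compatible and hence parallel, $\wtnabla \xi^{\sigma} = (\wtnabla d(\sigma t))^{\sharp}$, so the task reduces to checking that the Hessian $\wtnabla d(\sigma t)$ vanishes; using the explicit ambient metric \eqref{equation:F(q)-ambient-metric} I again anticipate every component to vanish identically except one proportional to the left-hand side of \eqref{equation:F(q)-Ricci-flat-ODE}, which is zero by hypothesis. It then remains to identify $\xi^{\sigma}\vert_{\mcG}$ with $L_0(\sigma)$. One verifies that $T\delta_s \cdot \xi^{\sigma} = s\,\xi^{\sigma}$ (the directions $\partial_y$ and $\partial_{\rho}$ are $\delta_s$-invariant while the coefficient $t^{-1}$ scales by $s^{-1}$), so $\xi^{\sigma}\vert_{\mcG}$ is a genuine tractor field, and being the restriction of a $\wtnabla$-parallel field it is $\nabla^{\mcT}$-parallel. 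Its canonical projection is $\Pi_0(\xi^{\sigma}\vert_{\mcG}) = \wtg(\xi^{\sigma}, \bbT)\vert_{\mcG}$; pairing against $\bbT = t\partial_t$ through the cross term $2t\,dt\,d\rho$ of \eqref{equation:F(q)-ambient-metric} gives $\wtg(\xi^{\sigma}, \bbT) = \sigma t$, which represents $\sigma \in \Gamma(\mcD[1])$. Since any parallel tractor $\chi$ satisfies $L_0(\Pi_0(\chi)) = \chi$, I conclude $\xi^{\sigma}\vert_{\mcG} = L_0(\sigma)$.

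The conceptual steps here—the nullity and the identification with $L_0(\sigma)$—are short once the data of Proposition \ref{proposition:F(q)-parallel-3-form-data} are in hand. The main obstacle is purely computational: verifying that both the conformal Ricci tensor of $\sigma^{-2} g_{F(q)}$ and the ambient Hessian of $\sigma t$ collapse to a single term proportional to the fourth-order operator appearing in \eqref{equation:F(q)-Ricci-flat-ODE}. Because the coefficients of $g_{F(q)}$ and of $\wtg_{F(q)}$ already involve up to the fourth derivative of $F$, these curvature computations are lengthy and are best carried out with computer algebra, precisely as for the analogous Proposition \ref{proposition:I(x)-parallel-null-vector-data}; this is why the explicit calculation is suppressed.
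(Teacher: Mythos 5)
Your proposal is correct and takes essentially the same route as the paper: the paper explicitly suppresses the proof of this proposition, stating it is ``produced in the same way'' as Proposition \ref{proposition:I(x)-parallel-null-vector-data}, whose proof is exactly your template---restrict to scales depending on the single essential variable (here $q$), verify by direct computation that $\Ric(\sigma^{-2}g_{F(q)})$ collapses to a multiple of the left-hand side of \eqref{equation:F(q)-Ricci-flat-ODE} times $dq^2$, deduce nullity of $L_0(\sigma)$ from $\ab{L_0(\sigma), L_0(\sigma)} = -2\lambda$ with $\lambda = 0$, and establish the remaining claims (parallelism of $d(\sigma t)^{\sharp}$, homogeneity, and the identification $\xi^{\sigma}\vert_{\mcG} = L_0(\sigma)$ via $\Pi_0$ and the splitting operator) by further direct computation. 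The only cosmetic caveat is that the prefactor in the collapsed Ricci tensor need only be a nowhere-vanishing function of $q$ (involving powers of $F''$) rather than a constant, which does not affect the argument.
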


(Equation \eqref{equation:F(q)-Ricci-flat-ODE} could also be recovered by the corresponding Ricci-flatness equation in \cite[\S3]{Nurowski2008} by rescaling the variable $\Upsilon$ there by the appropriate conformal factor and then changing variables via $\Upsilon = -\log \sigma$.)

We also give an analog of Theorem A for the $2$-plane fields determined by functions $F(q)$.

\begin{theorem}\label{theorem:F(q)-holonomy}
Let $F(q)$ be a function on an open interval such that $F''$ vanishes nowhere, and let $\wtg_{F(q)}$ be the Ricci-flat ambient metric \eqref{equation:F(q)-ambient-metric} of $c_{F(q)}$.
Let $\nabla^{\mcT}_{F(q)}$ denote the tractor connection of $c_{F(q)}$.
\begin{itemize}
    \item If $A_{F(q)} =    0$, that is, if $D_{F(q)}$ is locally flat, then $\Hol(\nabla^{\mcT}_{F(q)}) \cong \Hol(\wtg_{F(q)}) \cong \set{e}$.
    \item If $A_{F(q)} \neq 0$, then $\Hol(\nabla^{\mcT}_{F(q)}) \cong \Hol(\wtg_{F(q)}) \cong \Hoperator_5$.
\end{itemize}
\end{theorem}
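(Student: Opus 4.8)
The plan is to follow closely the proof of Theorem A, substituting the explicit data of Propositions~\ref{proposition:F(q)-parallel-3-form-data} and~\ref{proposition:F(q)-parallel-null-vector-data} for those of Propositions~\ref{proposition:I(x)-parallel-3-form-data} and~\ref{proposition:I(x)-parallel-null-vector-data}, and to treat the two cases separately. First I would record the containment valid in both cases. By Proposition~\ref{proposition:F(q)-parallel-3-form-data}, $\wtg_{F(q)}$ carries a parallel split-generic $3$-form $\wtPhi_{F(q)}$ with $H(\wtPhi_{F(q)}) = \wtg_{F(q)}$, and by Proposition~\ref{proposition:F(q)-parallel-null-vector-data} it carries two linearly independent parallel null vector fields $\xi$ and $\eta$, coming from a basis of the solution space $S$ of~\eqref{equation:F(q)-Ricci-flat-ODE}. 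A direct computation gives $\wtPhi_{F(q)}(\xi, \eta, \,\cdot\,) = 0$, so Proposition~\ref{proposition:holonomy-containment} yields $\Hol(\wtg_{F(q)}) \leq \Hoperator_5$; the corresponding parallel tractor $3$-form $\Phi_{F(q)}$ and the parallel null tractors $L_0(\sigma)$, $\sigma \in S$, give, via the tractor analog of Proposition~\ref{proposition:holonomy-containment}, the containment $\Hol(\nabla^{\mcT}_{F(q)}) \leq \Hoperator_5$.

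Next I would dispose of the locally flat case $A_{F(q)} = 0$. Since $A_{F(q)} \equiv 0$ is exactly local flatness of $D_{F(q)}$, naturality of Nurowski's construction together with the conformal flatness of $c_{\Delta}$ forces $c_{F(q)}$ to be conformally flat; hence its tractor curvature vanishes identically, $\nabla^{\mcT}_{F(q)}$ is a flat connection, and because $M_{F(q)}$ is simply connected we conclude $\Hol(\nabla^{\mcT}_{F(q)}) \cong \set{e}$. For the ambient metric I would compute the curvature $\wtR_{F(q)}$ of~\eqref{equation:F(q)-ambient-metric} directly and observe, as with~\eqref{equation:I(x)-ambient-curvature} in the proof of Theorem A, that it is a scalar multiple of $\Psi[F'']$ (equivalently, of the Cartan curvature $A_{F(q)}$); thus $A_{F(q)} = 0$ forces $\wtR_{F(q)} = 0$, so $\wtg_{F(q)}$ is flat and $\Hol(\wtg_{F(q)}) \cong \set{e}$.

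For the case $A_{F(q)} \neq 0$ I would argue exactly as in the proof of Theorem A. Fix a point $u_0 \in M_{F(q)}$ at which $A_{F(q)} \neq 0$, that is, $\Psi[F''](q_0) \neq 0$, and a point $z_0 \in \mcG$ lying above it. Computing $\wtR_{F(q)}$ and its successive covariant derivatives in the coframe $(\omega^a)$ and assembling them into the filtration $V^0 \subseteq V^1 \subseteq \cdots$ of Ambrose-Singer type, I would exhibit at $z_0$ five pointwise linearly independent endomorphisms, so that $\dim \mfhol'_{z_0}(\wtg_{F(q)}) \geq 5$. Since $\mfhol'_{z_0} \leq \mfhol_{z_0}$ while the containment $\Hol(\wtg_{F(q)}) \leq \Hoperator_5$ forces $\dim \mfhol_{z_0} \leq 5$, all three Lie algebras equal $\mfh_5$; as $\Hoperator_5$ is connected, $\Hol(\wtg_{F(q)}) \cong \Hoperator_5$. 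Theorem~\ref{theorem:CGGH} then transports this to $\mfhol'_{u_0}(\nabla^{\mcT}_{F(q)}) \cong \mfh_5$, and combining with the upper bound and the simple connectivity of $M_{F(q)}$ gives $\Hol(\nabla^{\mcT}_{F(q)}) \cong \Hoperator_5$.

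The main obstacle will be the infinitesimal-holonomy computation in the case $A_{F(q)} \neq 0$: pushing the filtration far enough to produce five independent endomorphisms requires $\wtR_{F(q)}$ together with its first three covariant derivatives in the explicit coframe, whose coefficients now involve $F''$, $F'''$, $F^{(4)}$ and their derivatives in place of the single function $I$, so the bookkeeping is considerably heavier than in Theorem A and is best verified by machine. A secondary subtlety is that $\Psi[F'']$ may vanish on a proper, nonempty subset of $M_{F(q)}$ without vanishing identically; one must therefore base the infinitesimal-holonomy computation at a point where $A_{F(q)} \neq 0$ and then appeal to the base-point independence~\eqref{equation:holonomy-conjugacy} of the holonomy conjugacy class on the connected manifold to transfer the conclusion to all of $M_{F(q)}$.
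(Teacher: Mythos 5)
Your proposal is correct and follows essentially the same path as the paper's proof: the containment $\Hol(\wtg_{F(q)}) \leq \Hoperator_5$ via the parallel $3$-form and null vector fields, vanishing of the ambient curvature (a multiple of $\Psi[F'']$) in the flat case, the $V^3$ infinitesimal-holonomy computation based at a point where $\Psi[F''] \neq 0$ in the non-flat case, and transfer to tractor holonomy via Theorem \ref{theorem:CGGH}. The only (harmless) deviation is in the flat case, where you obtain triviality of $\Hol(\nabla^{\mcT}_{F(q)})$ through conformal flatness of $c_{F(q)}$, whereas the paper simply uses the containment $\Hol(\nabla^{\mcT}_{F(q)}) \leq \Hol(\wtg_{F(q)})$.
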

\begin{proof}
By Proposition \ref{proposition:F(q)-parallel-3-form-data} $\wtg_{F(q)}$ admits a parallel split-generic $3$-form $\wtPhi_{F(q)}$ that satisfies $H(\wtPhi_{F(q)}) = \wtg_{F(q)}$, and by Proposition \ref{proposition:F(q)-parallel-null-vector-data} it admits two linearly independent parallel null vector fields, say, $\xi$ and $\eta$. Computing gives that $\wtPhi(\xi, \eta, \cdot) = 0$, so by Proposition \ref{proposition:holonomy-containment}, $\Hol(\wtg_{F(q)}) \leq \Hoperator_5$.

Computing gives that the curvature of $\wtM_{F(q)}$ is
\begin{equation*}
    \wtR = \tfrac{3}{5} t^2 (F'')^{-2} \Psi[F''] (\omega_2 \wedge \omega_4)^2 \textrm{,}
\end{equation*}
where $\Psi$ is the differential operator given by \eqref{equation:Psi-operator}. By \eqref{equation:F(q)-Cartan-curvature}, if $A_{F(q)} = 0$ then $\Psi[F''] = 0$ and so $\wtR = 0$. Since $\wtM_{F(q)}$ is simply connected, $\Hol(\wtg_{F(q)}) = \set{e}$, and since $\Hol(\nabla^{\mcT}_{F(q)}) \leq \Hol(\wtg_{F(q)})$, $\Hol(\nabla^{\mcT}_{F(q)}) = \set{e}$.

If $\Psi[F''] \neq 0$, pick $u \in M_{F(q)}$ such that $\Psi[F''](q) \neq 0$. Proceeding as in the proof of Theorem A, one can show that $\dim V^3_u = 5$, so $\dim \mfhol'_u(\wtg_{F(q)}) \geq 5$, and thus $\Hol(\wtg_{F(q)}) = \Hoperator_5$. Again, Theorem \ref{theorem:CGGH} gives that $\Hol(\nabla^{\mcT}_{F(q)}) = \Hoperator_5$ too.
\end{proof}

\subsection{Other examples}

Some further examples of $2$-plane fields reveal constraints on the possible relationships among root type $[4]$, symmetry algebra dimension, and holonomy.

The first example shows that having symmetry algebra of dimension at least $6$ is not a necessary condition for the holonomy of the tractor connection to be equal to $\Hoperator_5$.

\begin{example}
Example 6 of \cite{Nurowski2005} states that, according to Cartan, every $2$-plane field with root type $[4]$ can be (presumably locally) realized as $D_{F(q)}$ for some function $F(q)$. The is untrue: Consider the $2$-plane fields $D_{F[r]}$ defined for constant $r$ by
\[\label{equation:Strazzullo-examples}
    F[r](x, y, p, q, z) = e^y \left[ 1 + (e^{-2 y} q - \tfrac{1}{2} e^{-2 y} p^2)^r \right]
\]
on $\set{2 q > p^2}$; except when $r = 0, 1$ these $2$-plane fields are generic, and computing directly shows that
\begin{multline}\label{equation:Strazzullo-examples-symmetries}
    \mfaut(D_{F[r]}) \geq \langle \partial_x, x \partial_x - \partial_y - p \partial_p - 2 q \partial_q, \\
                        x^2 \partial_x - 2 x \partial_y - 2 (x p + 1) \partial_p - 2 (p + 2 x q) \partial_q, \partial_z\rangle \cong \mfgl(2, \bbR) \textrm{,}
\end{multline}
Strazzullo computed that if $r \in \set{-1, 2}$ \cite[Example 6.7.1]{Strazzullo2009} then $D_{F[r]}$ has root type $[4]$, and tedious analysis shows that for these values equality holds in \eqref{equation:Strazzullo-examples-symmetries} and hence in particular that $\dim \mfaut(D_{F[r]}) = 4$. Since $\dim \mfaut(D_{F(q)}) \geq 6$ for all functions $F(q)$, the $2$-plane fields $D_{F[-1]}$ and $D_{F[2]}$ are not locally equivalent to $D_{F(q)}$ for any $F$, nor to $D_I$ for any function $I$. One can still find for both of these examples, however, an explicit Ricci-flat ambient metric for the induced conformal class and, proceeding as in the proofs of Theorems A and \ref{theorem:F(q)-holonomy}, show that the tractor connection and the ambient metric both have holonomy $\Hoperator_5$. In particular, this example suggests that there might be a much broader class of $2$-plane fields with associated holonomy groups equal to $\Hoperator_5$ than the classes $D_I$ and $D_{F(q)}$ considered in this paper.
\end{example}

\begin{example}\label{example:doubrov-govorov}[Doubrov \& Govorov's Counterexample]
Recently Doubrov and Govorov constructed the $2$-plane field $D^* := D_F$ defined by the function \cite{DoubrovGovorov2013a}
\[
    F(x, y, p, q, z) = y + q^{1 / 3} \textrm{.}
\]
They computed that $D^*$ has constant root type $[4]$ and that $\mfaut(D^*) \cong \mfsl(2, \bbR) \rtimes \mfh_3$, where $\mfh_3$ denotes the $3$-dimensional Heisenberg algebra; in particular, $\mfaut(D^*)$ is nonsolvable and $\dim \mfaut(D^*) = 6$. All of the $2$-plane fields $D_I$ have solvable symmetry algebra, however, so $D^*$ is not locally equivalent to $D_I$ for any function $I$, which disproves Cartan's longstanding claim that the $2$-plane fields $D_I$ (locally) exhaust the $2$-plane fields of constant root type $[4]$ and symmetry algebra of dimension at least $6$.

Strazzullo claims that $D_{F[2 / 3]}$ also has root type $[4]$ and $6$-dimensional symmetry algebra \cite[Example 6.7.2]{Strazzullo2009}; this, together with the fact $\mfaut(D_{F[2 / 3]})$ contains a subalgebra isomorphic to $\mfsl(2, \bbR)$ and so is nonsolvable, would mean that it, too, would be a counterexample to Cartan's claim, though apparently this was not noticed until later. Computing, however, gives that $D_{F[2 / 3]}$ has constant root type $[2, 1, 1]$ and that the claimed symmetry algebra is not correct (in fact, this would violate Theorem \ref{theorem:Cartan-root-type-4}). Presumably this is a typo: Checking shows that the $2$-plane field $D_{F[1 / 3]}$ does have constant root type $[4]$ and the ($6$-dimensional) symmetry algebra indicated for $D_{F[2 / 3]}$---the algebra is spanned by the right-hand of \eqref{equation:Strazzullo-examples-symmetries} and
\begin{multline*}
    \langle
        e^{- y / 2} [4 \partial_x + 2 p^2 \partial_p + (6 p q - p^3) \partial_q - 4 e^y \partial_z], \\
        e^{- y / 2} [4 x \partial_x - 8 \partial_y + 2 x p^2 \partial_p + (6 x p q - 4 q - p^3 x + 2p^2) \partial_q - 4 e^y x \partial_z]
    \rangle
\end{multline*}
---so $D_{F[1 / 3]}$ is also a counterexample to Cartan's claim. In fact, Doubrov and Govorov state \cite[Remark 3]{DoubrovGovorov2013a} that they will prove in a forthcoming paper \cite{DoubrovGovorov2013b} that up to local equivalence, $D^*$ is the unique $2$-plane field with transitive symmetry algebra of dimension at least $6$ that Cartan did not identify. Checking shows that the symmetry algebra of $D_{F[1 / 3]}$ is transitive, and so by that result $D_{F[1 / 3]}$ is locally equivalent to $D^*$.

This is the only example of which the author is aware of a $2$-plane field of constant root type $[4]$ for which the associated holonomy groups are $\G_2$; in particular, it shows that constant root type $[4]$ is not a sufficient condition for having holonomy equal to $\Hoperator_5$, even if one also assumes that the symmetry algebra has dimension at least $6$. It remains possible, however, that having root type $[4]$ or $[\infty]$ at each point is a necessary condition for an (oriented) $2$-plane field to have associated holonomy groups equal to $\Hoperator_5$, or even for those holonomy groups to be a proper subgroup of $\G_2$.
\end{example}

\begin{remark}
Analyzing the Nurowski conformal structure $c^*$ induced by $D^*$ reveals that its behavior differs substantively from that of the structures $c_I$ induced by the $2$-plane fields $D_I$ in Cartan's class and also enjoys additional unusual properties. First, the induced conformal structure $c^* := c_{D^*}$ is not almost Einstein, which, by Theorem \ref{theorem:HS-Theorem-B} implies that $\mfaut(c^*) = \mfaut(D^*)$, and by the discussion in \ref{subsection:holonomy} that the tractor and ambient holonomy groups associated to $c^*$ are not contained in the stabilizer in $\G_2$ of any nonzero vector in the standard representation. Remarkably, with substantial effort one can solve explicitly for the Ricci-flat ambient metric $\wtg^*$ of $c^*$---there are relatively few known classes of examples of conformal structures that are not almost Einstein for which this is true. (Moreover the exact expression one most easily obtains for $\wtg^*$ is not polynomial in $\rho$.) Using this expression one can show using the techniques in this paper that the tractor and ambient holonomy groups associated to $c^*$ are in fact the full group $\G_2$, and hence this yields another explicit example of a metric with this exceptional holonomy group. Because these features are of independent interest, we postpone further discussion of (and explicit data for) this unusual example to a dedicated article currently in preparation \cite{Willse2013}.
\end{remark}

\bibliographystyle{alpha}
\bibliography{symmetric_235_h5_holonomy}

\end{document}